\documentclass[12pt]{amsart}
\usepackage{tikz}
\usetikzlibrary{calc,arrows}
\usepackage{xcolor}
\usepackage{amssymb,latexsym,amsmath}
\usepackage{graphicx,mathrsfs}
\usepackage{float}
\usepackage[margin=1in, centering]{geometry}
\usepackage[hidelinks]{hyperref}
\usepackage{url}

\usepackage{amstext}

\numberwithin{equation}{section}

\renewcommand{\theequation}{\arabic{section}.\arabic{equation}}

\newtheorem{theorem}{Theorem}[section]
\newtheorem{lemma}[theorem]{Lemma}

\newtheorem{proposition}[theorem]{Proposition}
\theoremstyle{remark}
\newtheorem{remark}[theorem]{Remark}
\theoremstyle{plain}

\newtheorem{definition}[theorem]{Definition}
\newtheorem{corollary}[theorem]{Corollary}

\theoremstyle{definition}
\newtheorem{ktdefinition}[theorem]{Definition}
\theoremstyle{plain}

\newcommand{\de}{\delta}

\newcommand{\wh}{\widehat}

\newcommand{\R}{\mathbb{R}}

\newcommand{\T}{\mathbb{T}  }

\newcommand{\supp}{\textup{supp}}

\begin{document}

\title[Helical maximal estimates]{Sharp helical maximal estimates in all dimensions}

\author{Shengwen Gan} \address{Shengwen Gan\\   Department of Mathematics, Sun Yat-sen University, Guangzhou, 510275, P.R. China
}\email{shengwengan2018@gmail.com}

\author{Changkeun Oh}\address{ Changkeun Oh\\  Department of Mathematical Sciences and RIM, Seoul National University, Republic of Korea } \email{changkeun.math@gmail.com}

\author{Jaehoon Shin} \address{ Jaehoon Shin\\   Department of Mathematical Sciences, Seoul National University, Republic of Korea }\email{jaehoon651@snu.ac.kr}

\begin{abstract}
We establish sharp $L^p$ maximal estimates for averages over smooth nondegenerate
curves in $\mathbb R^n$ for $p>n$.
More precisely, we show that for any nondegenerate curve $\gamma$ in $\mathbb R^n$ and the associated maximal operator
\begin{equation} M f(x):=\sup_{t>0} \Big|\int_{-1}^1f(x-t\gamma(s))\chi(s)\,\mathrm{d}s\Big|, \end{equation}
 the estimate
\begin{equation} \| Mf\|_{L^p(\mathbb R^n)}\lesssim_{\gamma,p,n} \|f\|_{L^p(\mathbb R^n)} \end{equation}
holds for all $p>n$.
A new ingredient is a weighted Kakeya-type estimate.
\end{abstract}

\maketitle

\section{Introduction}\label{sec:introduction}

Let $n\geq2$, let $\gamma\in C^\infty([-1,1];\mathbb R^n)$, and assume that the curve is nondegenerate, that is,
\begin{equation}\label{intro:nondegeneracy}
 \inf_{s\in[-1,1]}
 \big|\det(\gamma'(s),\gamma''(s),\ldots,\gamma^{(n)}(s))\big|>0.
\end{equation}
Fix $\chi\in C_c^\infty((-1,1))$. For $f\in L^p(\mathbb R^n)$,
$1\leq p\leq\infty$, define
\begin{equation}\label{intro:averages}
 A_tf(x):=\int_{-1}^1 f(x-t\gamma(s))\chi(s)\,\mathrm{d}s,
 \qquad Mf(x):=\sup_{t>0}|A_tf(x)|.
\end{equation}

\begin{theorem}\label{thm:helical-maximal}
Let $n\geq2$ and let $\gamma$ be nondegenerate.
For $n<p\leq\infty$,
\begin{equation}
 \|Mf\|_{L^p(\mathbb R^n)}
 \lesssim \|f\|_{L^p(\mathbb R^n)}.
\end{equation}
\end{theorem}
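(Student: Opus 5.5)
We follow the standard Pramanik–Seeger / Ko–Lee–Oh framework for curve maximal functions, with a new weighted Kakeya-type estimate supplying the square-function bound at the endpoint range $p>n$.

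\textbf{Reduction.} Decompose $A_t$ dyadically in frequency, $A_t f=\sum_{j\ge0}A_t P_j f$, where $P_j$ localizes to $|\xi|\sim 2^j$. The low-frequency part is dominated by the Hardy–Littlewood maximal function. For $j\ge1$ we aim to prove
\begin{equation*}
\big\|\sup_{1\le t\le 2}|A_tP_jf|\big\|_{L^p}\lesssim 2^{-\epsilon(p) j}\|f\|_{L^p},\qquad p>n .
\end{equation*}
Summing in $j$ then gives the local bound, and Littlewood–Paley theory together with scaling removes the restriction $t\in[1,2]$, since the pieces $A_t P_j$ with $t\sim 2^k$ are almost orthogonal. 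By the Sobolev embedding in $t$, the local estimate reduces to the local smoothing bound
\begin{equation*}
\|\chi(t)A_tP_jf\|_{L^p(\R^{n+1})}\lesssim 2^{-j/p-\epsilon j}\|f\|_{L^p}.
\end{equation*}
The multiplier of $A_t$ is $\widehat{\chi\,d\gamma}(t\xi)$. Its decay is governed by $\min_s\max_{1\le k\le n}|\langle\gamma^{(k)}(s),\xi\rangle|$, so we further decompose frequency space according to the order of vanishing of $\langle\gamma'(s),\xi\rangle,\dots,\langle\gamma^{(n-1)}(s),\xi\rangle$. The hardest region is near the cone $\Gamma_{n}=\{\xi:\langle\gamma^{(k)}(s),\xi\rangle=0,\ 1\le k\le n-1\}$, which is a one-parameter family of rays. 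On the other regions, induction on $n$ (the argument of Ko–Lee–Oh) gives the required decay.

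\textbf{Main piece.} Near $\Gamma_n$, decompose further at scales $2^{-j/n}\le\sigma\le1$ in the angular variable $s$. This produces anisotropic plate/slab pieces, and after stationary phase each piece is a wave-type operator $e^{it\phi(\xi)}$ adapted to a curved cone in $\R^{n+1}$. The local smoothing estimate then splits into two ingredients:
\begin{enumerate}
\item a decoupling inequality for this cone. We use the Beltran–Guo–Hickman–Seeger / Ko–Lee–Oh decoupling for nondegenerate cones, iterated over the scales, which loses only $2^{\epsilon j}$ in $\ell^p$-decoupling;
\item a square-function / Kakeya estimate controlling $\big\|\big(\sum_\theta|f_\theta|^2\big)^{1/2}\big\|_{L^p}$, where the wave packets of each $f_\theta$ are tubes/slabs in directions $(\gamma\text{-frame at } s_\theta,\,t)$.
\end{enumerate}
Decoupling alone gives local smoothing of order $1/p$ only for $p\ge n(n+1)$, or in a range of that kind. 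To reach $p>n$, we pass to $L^{p/2}$ and estimate
\begin{equation*}
\int \sum_\theta|f_\theta|^2\, w ,\qquad w\in L^{(p/2)'} .
\end{equation*}
This reduces to a maximal Nikodym/Kakeya bound over the family of slabs. The relevant weighted Kakeya estimate is
\begin{equation*}
\Big\|\sup_{T\ni x}\frac{1}{|T|}\int_T w\Big\|_{L^{q}}\lesssim 2^{\epsilon j}\|w\|_{L^{q}},\qquad q=(p/2)'<\tfrac{n}{n-2},
\end{equation*}
where the weights account for the $t$-dependence.

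\textbf{Main obstacle.} The obstacle is proving this weighted Kakeya estimate with the sharp exponent $q<n/(n-2)$. We would try to prove it by induction on scales combined with the nondegeneracy of the curve. The slabs whose directions lie in an $s$-interval of length $\sigma$ should, after a rescaling that preserves the class of nondegenerate curves (the usual affine rescaling $\gamma(s_0+\sigma s)$ normalized by the Frenet frame), look like a lower-scale configuration. Combined with a bilinear or transversality argument between separated $s$-intervals, which uses the $n$-dimensional torsion, this should close the induction with an $\epsilon$-loss. Interpolating the resulting local smoothing bound with the trivial $L^\infty$ bound then gives every $p>n$.
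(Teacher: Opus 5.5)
There is a genuine gap, and it lies in the reduction rather than in the Kakeya estimate. Via Sobolev embedding in $t$, you reduce to the unweighted local smoothing bound $\|\chi(t)A_tP_jf\|_{L^p(\mathbb R^{n+1})}\lesssim 2^{-j/p-\epsilon j}\|f\|_p$, which is local smoothing of order $\sigma>1/p$.

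For $n\geq3$, the Beltran--Hickman example shows that local smoothing of order $\sigma$ requires $\sigma\leq(1/2+2/p)/n$. Moreover, $1/p<(1/2+2/p)/n$ holds if and only if $p>2n-4$. Hence for every $n\geq5$ and $n<p\leq 2n-4$, the estimate you reduce to is false. No decoupling, square-function or Kakeya input can prove it. Your final interpolation with $L^\infty$ only moves upward in $p$, so you need the bound arbitrarily close to $p=n$, which is exactly the forbidden range. For $n\leq4$ your target estimate is consistent with the necessary conditions (and is in fact known), but those dimensions were already settled. The theorem is new precisely where your route is blocked.

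Somewhere in your chain a power of $2^j$ must therefore be lost. In the paper's terms, the passage from $\|\sum_\theta f_\theta\|_p$ to a square function with only $2^{\epsilon j}$ loss is not available. The unweighted multiscale $\ell^{p/2}$ estimates for these planks lose $\delta_j^{-(p/2-2)}$ at each angular stage, for example $\delta^{-1/2}=R^{1/10}$ at $p=5$ for the outermost planks when $n=5$.

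The missing idea is the weighted formulation of the maximal problem. To control $\sup_t$, you only need $|A_tf|$ on a set $E_f$ built as follows: after local constancy at scale $R^{-1}$, for each $R^{-1}$-cube in $x$ take the $R^{-1}$-interval in $t$ where $|A_tf|$ is largest. The set $E_f$ is a Katz--Tao $(R^{-1},n,C_n)$-set in $\mathbb R^{n+1}$. So it suffices to prove $\|A_tf\|_{L^p(E)}\lesssim R^{-1/p-\eta}\|f\|_p$ uniformly over such $E$. This is strictly weaker than local smoothing and is not contradicted by the Beltran--Hickman example.

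The proof must then exploit the growth condition $\int_{B(x,t)}w\leq C_0t^n$ for $w=\mathbf 1_E$. In the paper this enters through the weighted Kakeya-type bound $\int|\sum_s g_s|^rw\lesssim R^\varepsilon\delta^{-(r-2)}\rho_{r-2}\int|\sum_s g_s|^2$ for nonnegative plank-adapted $g_s$. That bound is proved by induction on $r$ with a high--low split. The low-frequency part is exactly where the growth condition is used, via $w*\Psi_L\lesssim L^{-1}$. The bound is then transferred to oscillatory sums by a refined decoupling over the plank wave packets, and this replaces the unweighted losses above by the affordable factor $\prod_j\delta_j^{-(j-1)}$.

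Your Nikodym-type bound concerns a dual function $w\in L^{(p/2)'}$ on all of $\mathbb R^{n+1}$. It never uses where in $t$ the supremum is attained, so even if it were proved it could not separate the maximal problem from local smoothing. Separately, your induction-on-scales/transversality sketch for it is not concrete enough to assess.
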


The range $p>n$ is sharp; see
\cite{PS2007,KLO2023} for the relevant necessary conditions.
For $n=2$, the result is the celebrated circular maximal theorem of
Bourgain~\cite{Bourgain1986}.
The corresponding bound for $n=3$ was proved independently by
Ko--Lee--Oh~\cite{KLO2022} and
Beltran--Guo--Hickman--Seeger~\cite{BGHS2025}.
The case $n=4$ was established by Gan--Maldague--Oh~\cite{GMO2026}.
Theorem~\ref{thm:helical-maximal} is new for $n\geq5$ and thus settles
the helical maximal problem in every dimension.

\subsection{Previous results}

The study of maximal averages over curved sets goes back to Stein's spherical maximal
theorem~\cite{Stein1976}, which gives the sharp bounds in dimensions at least three.
Bourgain~\cite{Bourgain1986} proved the planar circular maximal theorem and, more generally,
the sharp maximal bound for smooth nondegenerate planar curves in the range $p>2$.
In an unpublished survey from the late 1980s, Christ posed the $L^p$ boundedness
problem for the helical maximal operator; see
\cite[Introduction]{PS2007}.
Determining the sharp $L^p$ range for this operator gives a
higher-dimensional analogue of Bourgain's theorem.

Pramanik--Seeger~\cite{PS2007} derived maximal estimates for nondegenerate
curves in $\mathbb R^3$ from a cone decoupling inequality.
With the cone decoupling inequality~\cite{Wolff2000}, available at the time, their argument
gave $L^p$ bounds for sufficiently large $p$ for the first time.
Combining the optimal cone decoupling theorem of
Bourgain--Demeter~\cite{BD2015} with the Pramanik--Seeger argument yields
the range $p>4$.
The sharp range $p>3$ was then established independently by Ko--Lee--Oh~\cite{KLO2022}
and Beltran--Guo--Hickman--Seeger~\cite{BGHS2025} in 2021.

The local smoothing approach, initiated by Sogge~\cite{Sogge1991}, gains
regularity by averaging also in the dilation parameter. In the present setting,
one seeks estimates of the form
\begin{equation}\label{intro:local-smoothing}
 \Big(\int_1^2 \|A_tf\|_{L_\sigma^p(\mathbb R^n)}^p\,\mathrm{d}t\Big)^{1/p}\lesssim\|f\|_{L^p(\mathbb R^n)}.
\end{equation}
For $\sigma>1/p$, such estimates yield maximal bounds by Sobolev embedding.
Mockenhaupt--Seeger--Sogge~\cite{MSS1992} used this approach to give another proof
of Bourgain's circular maximal theorem.
Ko--Lee--Oh~\cite{KLO2023} later proved sharp estimates of the form
\eqref{intro:local-smoothing} for nondegenerate curves in higher dimensions
when $p$ is large. As a consequence, they obtained maximal
bounds for $p>2n-2$ when $n\geq4$.

A natural hope was to obtain the full maximal range through sharp local
smoothing estimates.
Beltran--Hickman~\cite{BH2025}, however, constructed a new example imposing
the additional necessary condition $\sigma\leq(1/2+2/p)/n$ in
\eqref{intro:local-smoothing} for $n\geq3$.
Building on the wave-envelope approach of Guth--Wang--Zhang~\cite{GWZ2020},
which yields sharp local smoothing for nondegenerate planar curve averages,
Gan--Maldague--Oh~\cite{GMO2026} established sharp local smoothing
estimates for curve averages in all dimensions. For $2\leq p<\infty$, they
proved \eqref{intro:local-smoothing} whenever
$\sigma<\min\{1/n,(1/2+2/p)/n,2/p\}$, matching the necessary conditions
up to the endpoints. Their theorem gives the sharp maximal range $p>4$
in dimension four and the range $p>2n-4$ when $n\geq5$.

For $n\geq5$, the standard local smoothing and Sobolev-embedding argument
therefore leaves the range $n<p\leq2n-4$ untreated.
Oh--Woo~\cite{OW2026} went beyond this threshold using a weighted formulation
of the maximal problem, introduced in dimension three by
Ko--Lee--Oh~\cite{KLO2022}. For $n\geq5$ and a fixed $C_0$, there is
$\varepsilon_0>0$ such that, for every $2n-4-\varepsilon_0<p<\infty$,
the following estimate holds for some $\eta>0$.
If $E\subset\mathbb R^n\times[1,2]$ is a Katz--Tao $(2^{-k},n,C_0)$-set
(see Definition~\ref{def:katz-tao}), then
\begin{equation}\label{intro:previous-weighted}
 \|A_tf\|_{L^p(E)}\lesssim 2^{-k(1/p+\eta)}\|f\|_{L^p(\mathbb R^n)}.
\end{equation}
Here $k\geq0$ is an integer and
$\operatorname{supp}\widehat f\subset\{\xi:2^{k-1}\leq|\xi|\leq2^{k+1}\}$. 
This yields maximal bounds in the same range of $p$.
We obtain Theorem~\ref{thm:helical-maximal} by proving the weighted estimate
\eqref{intro:previous-weighted} for every $n<p<\infty$; see
Theorem~\ref{thm:weighted-helical}.

\subsection{Ingredients of the proof}\label{intro:new-ingredient}

Fourier analytic and geometric methods offer complementary approaches to
the study of geometric maximal operators; see the recent discussion of
G{\"u}nd{\"u}z--Hickman~\cite{GH2026}.\footnote{Schlag~\cite{Schlag1998} gave a geometric proof of the local circular maximal theorem.}
It was initially unclear to us which approach would be most effective for
the helical maximal problem, particularly since the standard local
smoothing argument does not by itself reach the sharp range when
$n\geq5$~\cite{BH2025}.
Our proof uses Fourier analysis together with geometric information
encoded in a weighted Kakeya-type estimate, which is itself proved by
Fourier analysis.

We use the Fourier-decay decomposition of Gan--Maldague--Oh~\cite{GMO2026}
and their multiscale $\ell^{p/2}$ estimates for sums of functions with
Fourier support in  planks.
We combine these ingredients with a refined decoupling estimate adapted to the
planks and a weighted Kakeya-type estimate, both proved here.
The weighted Kakeya-type estimate is the main new ingredient.
We work with estimates on subsets of spacetime satisfying an
$n$-dimensional Katz--Tao condition.

\subsubsection{Fourier decay}

In the proof, we are naturally led to estimating the Fourier decay of a surface measure.
The Fourier decay varies with the direction of the frequency, so the worst
uniform decay rate does not have enough information. We use the
decomposition of Gan--Maldague--Oh~\cite{GMO2026}, which characterizes the Fourier decay by anisotropic frequency planks adapted to the derivative
directions of the curve. The underlying multiscale  geometry also appears in the restricted
projection work of
Gan--Guo--Guth--Harris--Maldague--Wang~\cite{GGGHMW2024} in dimension three
and Gan--Guo--Wang~\cite{GGW2024} in higher dimensions. 

\subsubsection{Refined decoupling for the planks}

Refined decoupling strengthens Bourgain--Demeter decoupling~\cite{BD2015}.
Its proof uses the underlying decoupling theorem as a black box and
obtains an additional gain from information about wave packet overlap.
It was introduced by Guth--Iosevich--Ou--Wang~\cite{GIOW2020} in their work
on the planar Falconer distance problem.

In Proposition~\ref{c13all:rp}, we prove a version adapted to the anisotropic
planks in the Fourier-decay decomposition of
Gan--Maldague--Oh~\cite{GMO2026}.
In this proof, we use the conic $\ell^2$ decoupling estimate of
Beltran--Guo--Hickman--Seeger~\cite[Section~7.4]{BGHS2021} as a black box.
They derive this estimate from the decoupling theorem for curves of
Bourgain--Demeter--Guth~\cite{BDG2016}. Appendix~\ref{app:refined} contains the proof of Proposition~\ref{c13all:rp}.
We use this refined decoupling estimate in
Proposition~\ref{c13all:prefix-transfer}, which is applied both in the proof of the
weighted Kakeya-type estimate and in the final proof of the main theorem.

\subsubsection{The multiscale $\ell^{p/2}$ estimates}

The wave-envelope method was developed by
Guth--Wang--Zhang~\cite{GWZ2020} to prove the sharp square-function estimate
for the cone in $\mathbb R^3$.
Guth--Maldague--Wang~\cite{GMW2024} developed high--low techniques
for decoupling on the parabola.
Maldague~\cite{Maldague2024} combined and developed these methods to prove
the sharp square-function estimate for the moment curve in $\mathbb R^3$.
Gan--Maldague--Oh~\cite[Theorem~5.3 and Proposition~9.5]{GMO2026}
adapted Maldague's moment-curve methods to obtain the cone $\ell^{p/2}$
estimate and multiscale estimates for planks.
We use the versions stated in Proposition~\ref{prop:gmo-unweighted}
to prove the weighted Kakeya-type estimate described below.

\subsubsection{A weighted Kakeya-type estimate}

Our main new estimate is Proposition~\ref{c13all:positive-moments}.
It controls the contribution from regions where many boxes with different
orientations overlap, for weights satisfying the $n$-dimensional growth condition
\eqref{c13all:weight}. Compared with the bound obtained from the number
of directions alone, it gains the factor $\rho_{r-2}$, one of the
frequency-plank widths.

Let us sketch a proof of the weighted Kakeya-type estimate.
For $n\geq4$, we prove this estimate by induction on the integer exponent
$4\leq r\leq n$.
The base case uses the $L^4$ estimate of
Gan--Maldague--Oh~\cite{GMO2026}.
For $r\geq5$, assume the weighted Kakeya-type estimate at exponent $r-1$.
We apply the high--low method of
Guth--Solomon--Wang~\cite{GSW2019} to the function measuring this overlap.
Local constancy and the growth condition on the weight control the
low-frequency part. The high-frequency part is oscillatory, so the
induction hypothesis for nonnegative functions does not apply directly.
We need an estimate for oscillatory sums at exponent $r$.
We obtain this estimate by combining the induction hypothesis with refined
decoupling, as in Proposition~\ref{c13all:prefix-transfer}.
After rescaling, we divide the pieces further and group nearby directions.
The Fourier supports of the resulting sums fit planks of the same type as
before, with different widths. We can therefore apply the proposition.
The multiscale $\ell^{p/2}$ estimates described above separate the angular contributions.
Together with the low-frequency bound, this proves the estimate at
exponent $r$ and closes the induction.

\subsection{Organization of the paper}

Section~\ref{sec:reduction} derives the maximal theorem from the weighted
estimate for curve averages and reduces that estimate to an estimate for a restriction-type operator in Theorem~\ref{thm:translated-plates}.
Section~\ref{sec:inputs} collects the multiscale $\ell^{q/2}$ estimates,
a lemma for removing convolution kernels, the wave packet decomposition,
and the refined decoupling estimate.
Section~\ref{sec:model-case} illustrates the argument for the outermost
planks in dimension five.
Section~\ref{sec:transfer} states the weighted Kakeya-type estimate and does some preparation.
Section~\ref{sec:positive} proves the weighted Kakeya-type estimate
by induction on the exponent, using the high--low method.
Section~\ref{sec:completion} combines these weighted estimates with the
multiscale estimates and an $L^4$ square-function estimate to prove
Theorem~\ref{thm:translated-plates}, completing the proof of the maximal
theorem. Appendix~\ref{app:refined} proves the refined decoupling estimate
in Proposition~\ref{c13all:rp}.

\subsection{Notations}\label{sec:notations}

Fix a large enough decay exponent
$M$. Let $U:=x_U+A[-1,1]^m$, where $\det A\ne0$. Choose a nonnegative spatial weight $W_U$ such that
\begin{equation}
 W_U(x)\sim(1+|A^{-1}(x-x_U)|)^{-2M},\qquad
 \operatorname{supp}\widehat W_U\subset cA^{-T}B(0,1),
\end{equation}
where $c>0$ is fixed and small and $A^{-T}B(0,1)$ is the dual of $U$. 

Put $w_U:=W_U/\int W_U$, and choose
a nonnegative normalized weight $w'_U$ satisfying
\begin{equation}
 \int w'_U=1,\qquad
 w'_U(x)\sim |U|^{-1}(1+|A^{-1}(x-x_U)|)^{-M}.
\end{equation}
We remark that $w_U'$ is morally the same as $w_U$, but with a larger tail.

A normalized kernel adapted to $A[-1,1]^m$ satisfies
\begin{equation}
 \int K=1,\qquad
 |K(x)|\lesssim  |\det A|^{-1}(1+|A^{-1}x|)^{-L}
\end{equation}
for a sufficiently large decay exponent $L$.

For a box $T$ with center $x_T$ and $c>0$, write
$cT:=x_T+c(T-x_T)$.
For a centered box $Q$, we write $T\parallel Q$ for the translates of $Q$
in a fixed tiling, or a fixed finite union of such tilings; in particular,
$\sum_{T\parallel Q}\mathbf1_T\lesssim1$.

\subsection{Acknowledgements}
Changkeun Oh was
supported by the POSCO Science Fellowship of POSCO TJ Park Foundation, and the National Research
Foundation of Korea (NRF) grant funded by the Korea government (MSIT) RS-2024-00341891.

\subsection{Use of AI}

The authors used ChatGPT-6 for discussions during the development of the
arguments and for assistance in preparing the manuscript. The authors take
full responsibility for the mathematical claims and the final text.

\section{Reduction to weighted estimates}\label{sec:reduction}

In this section, we first derive Theorem~\ref{thm:helical-maximal} from
the weighted estimate for curve averages in Theorem~\ref{thm:weighted-helical}.
We then use the Fourier decomposition of Gan--Maldague--Oh~\cite{GMO2026} to reduce Theorem~\ref{thm:weighted-helical} to Theorem~\ref{thm:translated-plates}.

\subsection{Reduction to weighted estimates for curve averages}

For $\delta>0$ and $F\subset\mathbb R^m$, let $|F|_\delta$ be the minimum number
of balls of radius $\delta$ that cover $F$.
\begin{definition}\label{def:katz-tao}
For $\de,\alpha,C>0$, we call $E\subset\mathbb R^m$ a
Katz--Tao $(\de,\alpha,C)$-set~if
\begin{equation}\label{conj:katz-tao}
 |E\cap B(x,r)|_\de\leq C(r/\de)^\alpha
 \quad\text{for every }x\in\mathbb R^m\text{ and }r\geq \de.
\end{equation}
\end{definition}
The same definition using a cube in place of each ball gives an
equivalent condition, up to a constant depending on $m$. 

\begin{theorem}\label{thm:weighted-helical}
Let $n<p<\infty$ and $C_0\geq1$. There exists
$\eta=\eta(p,n,C_0,\gamma,\chi)>0$ such that, for every integer $k\geq0$
and every Katz--Tao $(2^{-k},n,C_0)$-set
$E\subset\mathbb R^n\times[1,2]$,
\begin{equation}\label{conj:weighted-averaging}
 \|A_tf\|_{L^p(E)}
 \lesssim 
 2^{-k(1/p+\eta)}\|f\|_{L^p(\mathbb R^n)}
\end{equation}
whenever $\operatorname{supp}\widehat f\subset
\{\xi \in \R^n :2^{k-1}\leq|\xi|\leq2^{k+1}\}$.
\end{theorem}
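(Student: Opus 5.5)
The plan is to follow the route laid out in the introduction: reduce Theorem~\ref{thm:weighted-helical} to a restriction-type estimate for functions with Fourier support in the anisotropic planks of Gan--Maldague--Oh, and then prove that estimate with the weighted Kakeya-type inequality.

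First, write $A_tf(x)=\int \widehat f(\xi)\widehat{\chi\,d\gamma}(t\xi)e^{ix\cdot\xi}\,d\xi$. Decompose the multiplier $\widehat{\chi\,d\gamma}(t\xi)$ on the annulus $|\xi|\sim2^k$ using the Fourier-decay decomposition of \cite{GMO2026}. This splits the frequency region into pieces indexed by a scale $\ell\in\{1,\dots,n-1\}$ (the order of the largest derivative $\gamma^{(j)}(s)\cdot\xi$ that vanishes) and by an angular parameter $s$. On each piece the multiplier has size about $2^{-k/(\ell+1)}$ times a harmless symbol. In the $(\xi,t)$ variables, the corresponding pieces are planks of dimensions $2^k\times 2^{k\cdot(\cdot)}\times\cdots$, adapted to the frame $\gamma'(s),\dots,\gamma^{(n)}(s)$. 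The nondegenerate part, where no derivative vanishes to high order, already has decay $2^{-k/2}$ or better. For that part one can use the trivial Katz--Tao bound $|E|\lesssim C_0 2^{-k}$ (with Hölder) together with an $L^2\to L^\infty$ estimate. For $p>n$ this should give $2^{-k(1/p+\eta)}$ once the decay rate $1/(\ell+1)$ beats the loss $(\ell+1)/p$ and similar quantities. The genuinely hard pieces are the most degenerate ones, with decay only about $2^{-k/n}$. For these the requirement $p>n$ is exactly the threshold: one needs $\|\sum_\theta f_\theta\|_{L^p(E)}\lesssim 2^{-k(1/p-1/n+\eta)}\|f\|_p$, where each $f_\theta$ is frequency localized to a plank $\theta$ in spacetime. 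This is the content of Theorem~\ref{thm:translated-plates}.

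Second, to prove that plank estimate I would argue by induction on scales across the multiscale family of planks. I would first apply the refined decoupling of Proposition~\ref{c13all:rp}, through Proposition~\ref{c13all:prefix-transfer}. This bounds $\|\sum_\theta f_\theta\|_{L^p(E)}$ by an $\ell^2$ sum over wave packets, with a gain coming from the multiplicity of overlapping packets at points of $E$. Next I would pigeonhole the wave packets by amplitude and by the number of packets through each point. This leaves two regimes. In the low-multiplicity regime, refined decoupling combined with the Katz--Tao condition (each tube meets $E$ in at most about $C_0$ times its $n$-dimensional content) should give the gain directly. In the high-multiplicity regime, I would bound the overlap function $\sum_T \mathbf 1_T$ weighted by $E$ using the weighted Kakeya estimate, Proposition~\ref{c13all:positive-moments}. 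That estimate gains the factor $\rho_{r-2}$ over counting directions, and I would apply it at exponent $r=n$ after interpolation. The remaining angular sums would then be separated with the multiscale $\ell^{p/2}$ estimates of Proposition~\ref{prop:gmo-unweighted}, and an $L^4$ square function estimate would handle the base layer.

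Third, I would sum over $\ell$, over the dyadic amplitudes, and over the scales. This costs only $O(\log 2^k)$ factors, which the $\eta$-gain absorbs after shrinking $\eta$.

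I expect the main obstacle to be the second step, namely proving Proposition~\ref{c13all:positive-moments} itself by induction on $4\le r\le n$ with the high--low method. The low part is handled by local constancy and the $n$-dimensional growth of the weight. The high part is oscillatory, so the nonnegative induction hypothesis cannot be applied to it directly. My first attempt there would be to convert the hypothesis into an oscillatory $L^r$ bound through refined decoupling after rescaling, and to check that the regrouped planks stay within the same family so that the induction closes.
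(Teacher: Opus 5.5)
Your list of ingredients matches the paper. Your last paragraph (induction on $r$, high--low, turning the exponent-$(r-1)$ hypothesis into an oscillatory bound by refined decoupling after rescaling, and checking that the regrouped planks stay in the family) is how Proposition~\ref{c13all:positive-moments} is proved. The trouble is in the two places where you commit to specifics.

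In Step~1, the decomposition of \cite{GMO2026} is not indexed by a single degeneracy order. The pieces are indexed by admissible pairs $(\vec\delta,\vec\nu)$ with $\prod_j\delta_j^{n+1-j}=R^{-1}$, including mixed pairs with several $\delta_j<1$, and the decay $\delta=\prod_j\delta_j$ ranges over $[R^{-1/2},R^{-1/n}]$. The paper does not split pieces into easy and hard ones. It proves Theorem~\ref{thm:translated-plates} with the bound $\delta^{-1}R^{-1-\eta}$ uniformly over all admissible pairs. It then obtains Theorem~\ref{thm:weighted-helical} from the weighted Fourier reduction of \cite{GMO2026,OW2026} and the rescaling $g=f(\cdot/R)$, $\widetilde E=RE$.

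Your trivial bound for the least degenerate pieces also fails:
\begin{itemize}
\item H\"older against $|E\cap B_1|\lesssim 2^{-k}$ needs a local $L^p\to L^\infty$ bound with decay. By Bernstein, a frequency-$2^k$ multiplier of size $2^{-k/2}$ on a fixed fraction of the annulus has $L^p\to L^\infty$ norm $\gtrsim 2^{k(n/p-1/2)}$, so this route needs $p>2n$.
\item Interpolating the $L^2$ decay with $L^\infty$ gives exactly $2^{-k/p}$, with no $\eta$.
\end{itemize}
In the paper these pieces go through the same chain as all the others, and the gain comes from the $L^4$ square function.

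The more serious gap is in Step~2: the weighted machinery cannot be run on the full family of planks.
\begin{itemize}
\item Proposition~\ref{c13all:rp} holds only for $p\le(n+1-J)(n+2-J)$ when $\delta_j=1$ for $j>J$. For a general pair with $\delta_{n-1}<1$ this means $p\le 6$.
\item The H\"older step in Proposition~\ref{c13all:prefix-transfer} with $r=n<p$ needs refined decoupling at $q=\frac{(2n-2)p}{2n-p}>2n-2\ge 6$.
\end{itemize}
This is why Proposition~\ref{c13all:prefix-transfer} and Corollary~\ref{cor:weighted-oscillatory-sums} require $\delta_j=1$ for $j\ge\lfloor n/2\rfloor$.

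You gesture at ``the remaining angular sums,'' but the argument that actually closes is the scale split of Section~\ref{sec:completion}. Set $J=\lfloor n/2\rfloor-1$.
\begin{enumerate}
\item Apply the Corollary only to the coarse sums $\chi_B\sum_{s\in\mathbb I_\delta\cap I}\mathsf T_sg$, $I\in\mathcal J_J$, whose Fourier supports lie in truncated planks. This gains $\prod_{i\le J}\delta_i^{-(i-1)}$.
\item Pass through the scales $J<j\le n-1$ with the unweighted estimate \eqref{c13all:unweighted-stage} at $p_j=\min\{p,(n+1-j)(n+2-j)-2\}$. Lower the exponent using the $L^\infty$ bounds $R^{-1}\|g\|_\infty\prod_{k\ge j}\delta_k^{-1}$, and check $p-2-p_j/2\le j-1$; this is where $p\le n+1$ is used.
\item Finish with the $L^4$ square function estimate \eqref{c13all:operator-four}.
\end{enumerate}
Since $\prod_{j=1}^{n-1}\delta_j^{-(j-1)}=\delta^{-n}R^{-1}$, this gives $R^{-p+\varepsilon}\delta^{-n}\|g\|_\infty^{p-4}\|g\|_{L^4(W_{B^x})}^4$ on each $R$-ball. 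Interpolation together with $\delta\le R^{-1/n}$ then produces the gain $\delta^{p-n}\le R^{-(p-n)/n}$. Without this bookkeeping, your ``should give the gain'' is not verified.

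Finally, the Katz--Tao condition does not control a low-multiplicity regime as you suggest.
\begin{itemize}
\item The set $\{\sum_T W_T<1\}$ is trivial.
\item Refined decoupling is used only with $w\le 1$.
\item The weight enters solely through the low-frequency part of the weighted Kakeya estimate.
\item The two estimates are combined by H\"older at every multiplicity level, with $q$ chosen so that the power of the multiplicity cancels.
\end{itemize}
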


For $0<a<b<\infty$, define the maximal operator
\begin{equation}
 M_{[a,b]}f(x):=\sup_{a\leq t\leq b}|A_tf(x)|.
\end{equation}

\begin{proposition}\label{intro:weighted-to-maximal}
Theorem~\ref{thm:weighted-helical} implies Theorem~\ref{thm:helical-maximal}.
\end{proposition}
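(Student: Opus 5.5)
We follow the Ko--Lee--Oh weighted approach to the maximal problem. The first step is two standard reductions. By scaling ($A_t f(x)$ at scale $t\in[2^j,2^{j+1}]$ corresponds to $A_{t/2^j}$ applied to a dilate of $f$), it suffices to control the full maximal function by a square-function or Littlewood--Paley argument. Write $f=P_{\le 0}f+\sum_{k\geq1}P_kf$ with Littlewood--Paley projections $P_k$ adapted to $|\xi|\sim2^k$. The low-frequency part is dominated by the Hardy--Littlewood maximal function, uniformly over all dilations. For the high-frequency parts we use the standard device of combining the decay of $\widehat{\chi\,d\gamma}$ at scale $t$ with frequency $2^k/t$ with the local result. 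Summing over dyadic $t$-scales goes through a square function in $j$ when $p>2$, using the Pramanik--Seeger/KLO reduction. It then suffices to prove
\begin{equation*}
 \|M_{[1,2]}P_kf\|_{L^p(\R^n)}\lesssim 2^{-k\eta'}\|f\|_{L^p}
\end{equation*}
for some $\eta'>0$, uniformly in $k$ and with $P_k$ replaced by a frequency-localized family in $t$. We then sum the geometric series in $k$.

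The second and main step is to derive this local bound from Theorem~\ref{thm:weighted-helical} by linearization and duality. Fix $g=P_kf$. Since $\widehat g$ is supported where $|\xi|\sim 2^k$ and $\partial_t A_tg$ costs a factor $2^k$, the function $(x,t)\mapsto A_tg(x)$ is essentially constant on $2^{-k}$-balls in $\R^n\times[1,2]$, up to rapidly decaying tails. Concretely, $|A_tg(x)|\lesssim \sup_{|y|,|s|\lesssim 2^{-k}}|A_{t+s}\tilde g(x+y)|$, where $\tilde g$ is a slightly enlarged projection, and the supremum can be replaced by a weighted average with Schwartz tails.

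Choose a measurable $t(x)$ that nearly attains the supremum. Cover $\R^n$ by $2^{-k}$-cubes $Q$ and pick one $t_Q$ per cube, so that $\|M_{[1,2]}g\|_p^p\lesssim\sum_Q 2^{-kn}|A_{t_Q}\tilde g(x_Q)|^p$ up to tails. Let $E$ be the union of the $2^{-k}$-balls around the points $(x_Q,t_Q)$. Because there is only one point per $x$-cube, $|E\cap B(z,r)|_{2^{-k}}\lesssim (r2^k)^n$, so $E$ is a Katz--Tao $(2^{-k},n,C_0)$-set with $C_0$ depending only on $n$. Local constancy then gives
\begin{equation*}
 \|M_{[1,2]}g\|_p^p\lesssim 2^{k}\|A_t\tilde g\|_{L^p(E)}^p \lesssim 2^{k}\,2^{-k(1+p\eta)}\|f\|_p^p.
\end{equation*}
Here the factor $2^k$ converts $2^{-kn}$ into the $(n+1)$-dimensional volume $2^{-k(n+1)}$ of each ball. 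This yields the gain $2^{-k\eta}$.

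I expect the main obstacle to be making local constancy rigorous with the Schwartz tails. The translated ball families must still satisfy the Katz--Tao condition uniformly, and the shifts $y,s$ must be handled either by a decomposition into $O(1)$ lattices of shifts or by a pigeonholing over dyadic tail scales $2^{-k}2^{l}$, where each contributes $2^{-lM}$ times a Katz--Tao set bound with constant $2^{ln}$. The weak dependence of $\eta$ on $C_0$ is not an issue, since $C_0$ is fixed.
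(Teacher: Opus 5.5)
Your proposal follows the same route as the paper. You first use a scale-dependent Littlewood--Paley reduction (low frequencies by $M_{\rm HL}$, high frequencies via $\ell^p\subset\ell^2$ and the square function) to reduce to a local bound $\|M_{[1,2]}f\|_p\lesssim R^{-\sigma}\|f\|_p$ at frequency $R$. You then obtain that bound by choosing one $R^{-1}$-time interval per spatial $R^{-1}$-cube, which forms a Katz--Tao $(R^{-1},n,C_n)$-set, and applying local constancy.

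For the tails, the paper uses $H=b(t)A_tf$ with $\operatorname{supp}\widehat b$ compact and writes the smoothed indicator as $\int\Psi_R(u,v)\mathbf 1_{E_f+(u,v)}\,\mathrm{d}u\,\mathrm{d}v$, so every translate keeps the same Katz--Tao constant and time shifts $|v|>1/4$ are handled by the trivial $L^p$ bound. This matches your first option. Your second option, which applies the theorem to fattened sets whose Katz--Tao constant grows like $2^{ln}$, would not work as stated, because the theorem gives no control on how $\eta$ and the implied constant depend on $C_0$.
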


This proposition follows immediately from the following two lemmas.

\begin{lemma}\label{intro:local-maximal}
Assume Theorem~\ref{thm:weighted-helical}. For every $n<p<\infty$,
there exists $\sigma>0$ such that
\begin{equation}\label{intro:local-decay}
 \|M_{[1,2]}f\|_p\lesssim R^{-\sigma}\|f\|_p
\end{equation}
for every $R\geq2$ and every $f$ satisfying
$\operatorname{supp}\widehat f\subset\{\xi:R/2\leq|\xi|\leq2R\}$.
\end{lemma}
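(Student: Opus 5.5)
The plan is to linearize the supremum in $t$, use the frequency localization to make $A_tf(x)$ locally constant at scale $R^{-1}$ in $(x,t)$, and then apply Theorem~\ref{thm:weighted-helical} to a set which, for each $R^{-1}$-cube in $x$, keeps only one $R^{-1}$-interval in $t$. Such a set is a graph over $\mathbb R^n$ at scale $R^{-1}$, hence a Katz--Tao $(R^{-1},n,C)$-set.

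\textbf{Step 0: reduce to $R=2^k$.} If $\operatorname{supp}\widehat f\subset\{R/2\le|\xi|\le2R\}$, we split $f$ with smooth Fourier multipliers into at most boundedly many pieces. Each piece has Fourier support in some annulus $\{2^{k-1}\le|\xi|\le2^{k+1}\}$ with $2^k\sim R$. The multipliers are bounded on $L^p$, so we may assume $R=2^k$.

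\textbf{Step 1: local constancy.} Let $\psi$ be a Schwartz function with $\widehat\psi=1$ on $\{|\xi|\le 4\}$, and set $\psi_R(x)=R^n\psi(Rx)$. Then $f=f*\psi_R$, so $A_tf=(A_tf)*\psi_R$.

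The $t$-derivative of $A_tf$ costs a factor $R$, since $\partial_tA_tf$ is again an average against a smooth density applied to a function with frequencies $\lesssim R$. Hence the function $(x,t)\mapsto A_tf(x)$ behaves as if it had Fourier support of size $O(R)$ in all $n+1$ variables. By the standard argument (Bernstein / the mean value property), for $t\in[1,2]$,
\[
|A_tf(x)|\lesssim \sum_{v\in\mathbb Z^{n}}\sum_{|j|\ge0}(1+|v|+|j|)^{-N}\,
\Big(R^{n+1}\int_{Q_{x}+v/R}\int_{I_t+j/R}|A_{s}f(y)|^p\,ds\,dy\Big)^{1/p}.
\]
Here $Q_x$ is the $R^{-1}$-cube containing $x$, and $I_t$ is the $R^{-1}$-interval containing $t$. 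One can instead apply Hölder with a weight; the point is to obtain an $L^p$ average over a neighbouring $R^{-1}$-cell with rapidly decaying coefficients.

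Taking the supremum over $t$, for each cube $Q$ and each shift $(v,j)$ we choose an interval $I_{Q}^{v,j}$ that is maximizing among grid intervals in a slightly enlarged range $[1/2,4]$.

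\textbf{Step 2: the Katz--Tao sets.} For fixed $(v,j)$, let
\[
E_{v,j}:=\bigcup_Q (Q+v/R)\times I_Q^{v,j}.
\]
Over each $R^{-1}$-cube in $x$ this set contains exactly one $R^{-1}$-cell, so $|E_{v,j}\cap B(z,r)|_{1/R}\lesssim (rR)^n$. Thus $E_{v,j}$ is Katz--Tao $(2^{-k},n,C_0)$ with $C_0$ depending only on $n$.

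Summing over $Q$ and using the bound from Step 1 gives
\[
\|M_{[1,2]}f\|_p\lesssim\sum_{v,j}(1+|v|+|j|)^{-N}\,R^{1/p}\|A_tf\|_{L^p(E_{v,j})}.
\]
The factor $R^{1/p}$ comes from $\sum_Q R^{-n}\cdot R^{n+1}\int_{\text{cell}} = R\int_{E}$ inside the $p$-th power.

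Theorem~\ref{thm:weighted-helical} gives $\|A_tf\|_{L^p(E_{v,j})}\lesssim R^{-1/p-\eta}\|f\|_p$. Summing the rapidly decaying series yields \eqref{intro:local-decay} with $\sigma=\eta$.

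\textbf{Expected obstacle: the $t$-range.} Theorem~\ref{thm:weighted-helical} is stated for $E\subset\mathbb R^n\times[1,2]$, while the shifted cells may reach slightly outside $[1,2]$. The far shifts $|j|\gtrsim R$ are harmless: there we use the trivial bound $|A_tf|\lesssim Mf_{\rm HL}$ together with the decay $(1+|j|)^{-N}$. For the near shifts, I would split $[1/2,4]$ into boundedly many subintervals of the form $[\lambda,2\lambda]$. For each, write $A_tf=A_{t/\lambda}(f(\lambda\,\cdot))(\cdot/\lambda)$. This rescales the set by $\lambda\sim1$, which preserves the Katz--Tao property up to constants, and moves the frequency annulus by a bounded factor, which is handled by Step 0 again. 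Carrying the tails through these rescalings cleanly is the main technical bookkeeping; the rest is routine.
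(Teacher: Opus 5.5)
Your proposal is correct and is essentially the paper's argument. Both use local constancy of $A_tf(x)$ at scale $R^{-1}$ in $(x,t)$, pick one maximizing $R^{-1}$-interval per $R^{-1}$-cube to form a graph-like Katz--Tao $(R^{-1},n,C_n)$-set, apply Theorem~\ref{thm:weighted-helical} to the nearby translates (your rescaling to $t\in[1,2]$ spells out what the paper leaves implicit), and use a trivial bound on the far ones; the paper differs only cosmetically, multiplying by a Schwartz $b(t)$ with $\operatorname{supp}\widehat b\subset[-c,c]$ and averaging over continuous translates $(u,v)$ against $\Psi_R$ instead of summing discrete shifts $(v,j)$ with Minkowski.

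Two small repairs are needed. Justify local constancy in $t$ not by the derivative heuristic but by the fact that the space-time Fourier transform of $(x,t)\mapsto A_tf(x)$ is carried by $\{(\xi,-\gamma(s)\cdot\xi):\xi\in\operatorname{supp}\widehat f,\ s\in\operatorname{supp}\chi\}\subset B(0,CR)$. For the far shifts, the pointwise bound $|A_tf|\lesssim M_{\rm HL}f$ is false, since curve averages are not dominated by $M_{\rm HL}$, but $\sup_t\|A_tf\|_p\leq\|\chi\|_1\|f\|_p$ is all you need there.
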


\begin{lemma}\label{intro:global-maximal}
Let $n\leq p<\infty$. Suppose that \eqref{intro:local-decay} holds
for some $\sigma>0$.
Then we have
\begin{equation}
 \|Mf\|_p\lesssim\|f\|_p
\end{equation}
 for every $f\in L^p(\mathbb R^n)$.
\end{lemma}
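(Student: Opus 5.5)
We prove Lemma~\ref{intro:global-maximal} by the standard Littlewood--Paley and scaling argument. The hypothesis $p\geq n\geq2$ enters only through the embedding $\ell^2\subset\ell^p$.

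\emph{Set-up.} Fix a smooth Littlewood--Paley decomposition $f=P_{\le0}f+\sum_{j\geq1}P_jf$. Here $\widehat{P_jf}$ is supported in $\{2^{j-1}\leq|\xi|\leq2^{j+1}\}$, and $P_{\le0}$ has a Schwartz multiplier $\psi(\xi)$ supported in $|\xi|\le2$. More generally, for $m\in\mathbb Z$ write $f=P_{\le m}f+\sum_{j>m}P_jf$ with $P_j$ defined for all $j\in\mathbb Z$. Since $Mf\leq\sup_{k\in\mathbb Z}\sup_{2^k\leq t\leq 2^{k+1}}|A_tf|$, for $t\in[2^k,2^{k+1}]$ we split
\[
A_tf=A_tP_{\le -k}f+\sum_{l\geq1}A_tP_{l-k}f .
\]

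\emph{Low frequencies.} The operator $A_tP_{\le -k}$ is convolution with $\mu_t*\psi^\vee_{2^{-k}}$. Here $\mu_t$ is the pushforward of $\chi(s)\,\mathrm ds$ under $s\mapsto t\gamma(s)$, and $\psi^\vee_{2^{-k}}(x)=2^{-kn}\psi^\vee(2^{-k}x)$. Rescaling by $2^k$, this kernel is an $L^1$-normalized dilate, at scale $2^k$, of $\mu_{t2^{-k}}*\psi^\vee$ with $t2^{-k}\in[1,2]$. Since $\mu_{t2^{-k}}$ is a finite measure supported in a fixed compact set and $\psi^\vee$ is Schwartz, we have $|\mu_{t2^{-k}}*\psi^\vee(x)|\lesssim(1+|x|)^{-n-1}$ uniformly in $t$. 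This majorant is radial, decreasing and integrable, so
\[
\sup_k\sup_{t\in[2^k,2^{k+1}]}|A_tP_{\le-k}f|\lesssim M_{HL}f ,
\]
which is bounded on $L^p$ for $p>1$.

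\emph{High frequencies.} Fix $l\geq1$ and dominate the supremum in $k$ by an $\ell^p$ sum:
\[
\Big\|\sup_k\sup_{t\in[2^k,2^{k+1}]}|A_tP_{l-k}f|\Big\|_p^p\leq\sum_k\Big\|\sup_{t\in[2^k,2^{k+1}]}|A_tP_{l-k}f|\Big\|_p^p .
\]
Consider $g(x)=P_{l-k}f(2^kx)$. Then $A_tP_{l-k}f(2^kx)=A_{t2^{-k}}g(x)$, and $\widehat g$ is supported where $2^{l-1}\leq|\xi|\leq2^{l+1}$. Changing variables and applying \eqref{intro:local-decay} with $R=2^l$ gives
\[
\Big\|\sup_{t\in[2^k,2^{k+1}]}|A_tP_{l-k}f|\Big\|_p\lesssim2^{-l\sigma}\|P_{l-k}f\|_p .
\]
Since $p\geq2$,
\[
\sum_k\|P_{l-k}f\|_p^p=\Big\|\big(\sum_k|P_kf|^p\big)^{1/p}\Big\|_p^p\leq\Big\|\big(\sum_k|P_kf|^2\big)^{1/2}\Big\|_p^p\lesssim\|f\|_p^p
\]
by the Littlewood--Paley square function estimate. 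Hence the $l$-th piece is $\lesssim2^{-l\sigma}\|f\|_p$.

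\emph{Conclusion.} Summing the geometric series over $l\geq1$ and adding the low-frequency bound gives $\|Mf\|_p\lesssim\|f\|_p$. There is no real obstacle here. The only points needing care are the uniform pointwise majorant for the low-frequency kernels over $t\in[2^k,2^{k+1}]$, and matching the annulus in \eqref{intro:local-decay} with the Littlewood--Paley supports after rescaling. If the supports did not match exactly, one would split $P_j$ into boundedly many pieces.
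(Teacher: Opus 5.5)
Your proposal is correct and follows essentially the same route as the paper. Both arguments split each dyadic range of $t$ into the frequency piece below $2^{-k}$, which is dominated by $M_{\rm HL}$ through kernel decay, and the pieces $P_{l-k}f$, $l\geq 1$. The latter are handled by rescaling to $t\in[1,2]$, applying \eqref{intro:local-decay} with $R=2^l$, replacing the supremum over $k$ by an $\ell^p$ sum, and using $p\geq2$ together with the Littlewood--Paley square function estimate before summing the geometric series in $l$.
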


\begin{proof}[Proof of Lemma \ref{intro:local-maximal}]
Choose $b\in\mathcal S(\mathbb R)$ such that
\begin{equation}
 \operatorname{supp}\widehat b\subset[-c,c],
 \qquad \inf_{1\leq t\leq2}|b(t)|\geq1,
\end{equation}
for a sufficiently small  $c>0$.  Put
\begin{equation}
 H(x,t):=b(t)A_tf(x).
\end{equation}
Then,
\begin{equation}
 \widehat H(\xi,\tau)
 =\widehat f(\xi)\int_{-1}^1
       \chi(s)\widehat b(\tau+\gamma(s)\cdot\xi)\,\mathrm{d}s.
\end{equation}
Consequently, $\operatorname{supp}\widehat H\subset B(0,C R)$ in
$\mathbb R^{n+1}$.
The local constancy of $H$ at scale $R^{-1}$ implies, for every
 sufficiently large $L$,
\begin{equation}
 |H(z)|^p\lesssim
 \int |H(z')|^p R^{n+1}
 (1+R|x-x'|)^{-L}(1+R|t-t'|)^{-L}\,\mathrm{d}z'.
\end{equation}

Fix the disjoint tiling
$\mathcal Q:=\{R^{-1}(k+[0,1)^n):k\in\mathbb Z^n\}$ of $\mathbb R^n$.
Partition $[1,2]$ into $R$ intervals $I_j$ of length $R^{-1}$. For every $Q\in\mathcal Q$,
choose an index $j(Q)$ for which
\begin{equation}
 \sup_{Q\times I_{j(Q)}}|H|
 =\max_{1\leq j\leq R}\sup_{Q\times I_j}|H|.
\end{equation}
Define
\begin{equation}\label{intro:selected-set}
 E_f:=\bigcup_{Q\in\mathcal Q} Q\times I_{j(Q)}.
\end{equation}
Projection onto $\mathbb R^n$ and the disjoint tiling by $Q$ give
\begin{equation}
 \#\{Q\in\mathcal Q:(Q\times I_{j(Q)})\cap B(z,r)\ne\varnothing\}
 \leq C_n(rR)^n,\qquad z\in\mathbb R^{n+1},\quad r\geq R^{-1}.
\end{equation}
Hence $E_f$ is a Katz--Tao $(R^{-1},n,C_n)$-set.
Put
\begin{equation}
 \Psi_R(x,t):=R^{n+1}(1+R|x|)^{-L}(1+R|t|)^{-L}.
\end{equation}
If $z,z_0$ lie in the same box of side $R^{-1}$, then
\begin{equation}
 \Psi_R(z-z')\sim\Psi_R(z_0-z')
\end{equation}
for every $z'\in\mathbb R^{n+1}$.
Since $|Q|=R^{-n}$ and $|Q\times I_{j(Q)}|=R^{-(n+1)}$, it follows that
\begin{equation}\begin{aligned}
 \int_Q |M_{[1,2]}f(x)|^p\,\mathrm{d}x
 &\leq |Q|\sup_{Q\times I_{j(Q)}}|H|^p \\
 &\lesssim R\int |H(z)|^p
       (\mathbf1_{Q\times I_{j(Q)}}*\Psi_R)(z)\,\mathrm{d}z.
\end{aligned}\end{equation}
Summing over $Q$ gives
\begin{equation}\label{intro:discretization}
 \|M_{[1,2]}f\|_p^p
 \lesssim R\int_{\mathbb R^{n+1}}\Psi_R(u,v)
       \int_{E_f+(u,v)}|b(t)A_tf(x)|^p\,\mathrm{d}x\,\mathrm{d}t\,\mathrm{d}u\,\mathrm{d}v.
\end{equation}

We will apply Theorem~\ref{thm:weighted-helical}. Indeed,
Theorem~\ref{thm:weighted-helical} also gives, for some $\eta_1>0$,
\begin{equation}
 \|A_tf\|_{L^p(F)}
 \lesssim R^{-1/p-\eta_1}\|f\|_p
\end{equation}
for Katz--Tao $(R^{-1},n,C_n)$-sets
$F\subset\mathbb R^n\times[3/4,9/4]$.
Thus, for $|v|\leq1/4$,
\begin{equation}
 \int_{E_f+(u,v)}|b(t)A_tf(x)|^p\,\mathrm{d}x\,\mathrm{d}t
 \lesssim R^{-1-p\eta_1}\|f\|_p^p.
\end{equation}
For the remaining values of $(u,v)$, Minkowski's inequality gives
$\|A_tf\|_p\leq\|\chi\|_1\|f\|_p$ uniformly in $t$, and hence
\begin{equation}
 \int_{E_f+(u,v)}|b(t)A_tf(x)|^p\,\mathrm{d}x\,\mathrm{d}t
 \leq\int_{\mathbb R}|b(t)|^p\|A_tf\|_p^p\,\mathrm{d}t
 \lesssim\|f\|_p^p.
\end{equation}
Since $L>n$,
\begin{equation}
 \int\Psi_R\lesssim1,
 \qquad
 \int_{\mathbb R^n}\int_{|v|>1/4}\Psi_R(u,v)\,\mathrm{d}v\,\mathrm{d}u
 \lesssim R^{1-L}.
\end{equation}
Substituting these bounds into \eqref{intro:discretization} and choosing
$L>2+p\eta_1$ gives
\begin{equation}
 \|M_{[1,2]}f\|_p^p
 \lesssim(R^{-p\eta_1}+R^{2-L})\|f\|_p^p
 \lesssim R^{-p\eta_1}\|f\|_p^p.
\end{equation}
This proves the lemma with $\sigma=\eta_1$.
\end{proof}

We next prove Lemma \ref{intro:global-maximal}.

\begin{proof}[Proof of Lemma \ref{intro:global-maximal}]
Choose $\varphi\in C_c^\infty(\mathbb R^n)$, equal to one on
$B(0,1)$ and supported in $B(0,2)$. Define the Littlewood-Paley type operators
\begin{equation}
 \widehat{P_{\leq l}f}(\xi):=\varphi(2^{-l}\xi)\widehat f(\xi),
 \qquad P_l:=P_{\leq l}-P_{\leq l-1}.
\end{equation}
For each integer $j$,
\begin{equation}
 f=P_{\leq-j}f+\sum_{m=1}^\infty P_{m-j}f,
 \qquad
 \operatorname{supp}\widehat{P_lf}
 \subset\{2^{l-1}\leq|\xi|\leq2^{l+1}\}.
\end{equation}
Put $\rho:=\varphi^\vee$.
For $1\leq u\leq2$,
\begin{equation}
 A_{2^ju}P_{\leq-j}f=f*K_{j,u},\qquad
 K_{j,u}(y):=2^{-jn}\int
       \rho(2^{-j}y-u\gamma(s))\chi(s)\,\mathrm{d}s.
\end{equation}
Boundedness of $\gamma$ on $\operatorname{supp}\chi$ and rapid decay
of $\rho$ give, for every $L$,
\begin{equation}
 |K_{j,u}(y)|\lesssim 
 2^{-jn}(1+2^{-j}|y|)^{-L}.
\end{equation}
Let $M_{\rm HL}$ denote the Hardy--Littlewood maximal operator.
Decompose the last bound into $|y|\leq2^j$ and the annuli
$2^{j+l-1}<|y|\leq2^{j+l}$, $l\geq1$. If $L>n$, then
\begin{equation}
 \sup_{j\in\mathbb Z} M_{[2^j,2^{j+1}]}P_{\leq-j}f
 \lesssim\sum_{l=0}^\infty 2^{-l(L-n)}M_{\rm HL}f
 \lesssim M_{\rm HL}f.
\end{equation}

For $m\geq1$, set
\begin{equation}
 G_m(x):=\sup_{j\in\mathbb Z}
             M_{[2^j,2^{j+1}]}P_{m-j}f(x).
\end{equation}
For $g_{j,m}(y):=(P_{m-j}f)(2^jy)$,
\begin{equation}
 \operatorname{supp}\widehat g_{j,m}
 \subset\{\xi:2^{m-1}\leq|\xi|\leq2^{m+1}\},\qquad
 M_{[2^j,2^{j+1}]}P_{m-j}f(2^jy)=M_{[1,2]}g_{j,m}(y).
\end{equation}
Applying \eqref{intro:local-decay} and changing variables gives
\begin{equation}
 \|M_{[2^j,2^{j+1}]}P_{m-j}f\|_p
 \lesssim 2^{-m\sigma}\|P_{m-j}f\|_p.
\end{equation}
Using $p\geq2$ and the Littlewood--Paley inequality,
\begin{equation}\begin{aligned}
 \|G_m\|_p^p
 &\leq\sum_{j\in\mathbb Z}\|M_{[2^j,2^{j+1}]}P_{m-j}f\|_p^p \\
 &\lesssim 2^{-mp\sigma}\sum_{l\in\mathbb Z}\|P_lf\|_p^p \\
 &\leq2^{-mp\sigma}
       \int\Big(\sum_{l\in\mathbb Z}|P_lf|^2\Big)^{p/2}
 \lesssim 2^{-mp\sigma}\|f\|_p^p.
\end{aligned}\end{equation}
Combining the frequency decomposition and the low-frequency bound with
the triangle inequality gives
\begin{equation}
 Mf=\sup_{j\in\mathbb Z}M_{[2^j,2^{j+1}]}f
 \lesssim M_{\rm HL}f+\sum_{m=1}^\infty G_m.
\end{equation}
The $L^p$ norm is at most
\begin{equation}
 C_p\|f\|_p+C_p\sum_{m=1}^\infty2^{-m\sigma}\|f\|_p
 \lesssim \|f\|_p.
\end{equation}
This finishes the proof.
\end{proof}

\subsection{Reduction to weighted estimates for Fourier restriction-type operators}\label{sec:frame-planks}
We first express the averaging operator as a convolution in space-time.
Choose $\vartheta\in C_c^\infty((1/2,3))$ with $\vartheta=1$ on $[1,2]$, and put
\begin{equation}
 \mu:=\int_{\mathbb R}\int_{-1}^{1}
 \chi(s)\vartheta(u)\,\delta_{(u\gamma(s),u)}\,\mathrm ds\,\mathrm du.
\end{equation}
Then
\begin{equation}
 \vartheta(t)A_tf=(f\otimes\delta_0)*\mu,
 \qquad
 \mathcal F_{x,t}[\vartheta(t)A_tf](\xi,\tau)
 =\widehat f(\xi)\widehat\mu(\xi,\tau),
\end{equation}
where
\begin{equation}
 \widehat\mu(\xi,\tau)
 =\int_{\mathbb R}\int_{-1}^{1}
 e^{-iu(\gamma(s)\cdot\xi+\tau)}
 \chi(s)\vartheta(u)\,\mathrm ds\,\mathrm du.
\end{equation}
Gan--Maldague--Oh~\cite[Section~3]{GMO2026} describe the pointwise decay of
$\widehat\mu$ through an adapted plank decomposition. Following their
framework, we reduce Theorem~\ref{thm:weighted-helical} to
Theorem~\ref{thm:translated-plates}. We now introduce the associated planks
and scales.

Put $\Gamma(s):=(\gamma(s),1)$. Expansion along the last row gives
\begin{equation}
 \det(\Gamma,\Gamma',\ldots,\Gamma^{(n)})
 =(-1)^n\det(\gamma',\ldots,\gamma^{(n)})\ne0.
\end{equation}
Thus $\Gamma,\Gamma',\ldots,\Gamma^{(n)}$ are linearly independent. Apply the Gram--Schmidt process in
this order, denote the resulting orthonormal frame by
$(\mathbf v_1,\ldots,\mathbf v_{n+1})$, and put
\begin{equation}\label{red:frame}
 \mathbf e_i(s):=\mathbf v_{n+2-i}(s),\qquad 1\leq i\leq n+1.
\end{equation}
In particular,
\begin{equation}\label{red:last-vector}
 \mathbf e_{n+1}(s)=\frac{\Gamma(s)}{|\Gamma(s)|},
 \qquad
 \Gamma(s)\cdot\mathbf e_i(s)=0\quad(1\leq i\leq n).
\end{equation}
For each fixed integer $k\geq0$,
\begin{equation}
 \sup_{s\in[-1,1]}|\partial_s^k\mathbf e_i(s)|\leq C_k
 \qquad(1\leq i\leq n+1),
\end{equation}
where $C_k$ depends on the curve bounds already specified.

Let $R\geq1$. Suppose we are given numbers $0<\delta_j\leq1$, $1\leq j\leq n-1$,
such that
\begin{equation}\label{red:scale-equation}
 \prod_{j=1}^{n-1}\delta_j^{n+1-j}=R^{-1}.
\end{equation}
Define
\begin{equation}\label{red:scales}
 \delta:=\prod_{j=1}^{n-1}\delta_j,\qquad
 \rho_0:=1,\qquad
 \rho_j:=\prod_{l=1}^j\delta_l^{j+1-l}\ (1\leq j\leq n-1),
 \qquad\rho_n:=R^{-1}.
\end{equation}
For $0\leq J\leq n-1$, replacing $\delta_i$ by $1$ for $i>J$ gives
\begin{equation}
 R_J:=(\rho_n^{(J)})^{-1},\qquad
 \rho_l^{(J)}:=\prod_{i=1}^{\min(l,J)}\delta_i^{\,l+1-i}
 \quad(0\leq l\leq n).
\end{equation}
Empty products equal one. A truncated plank has widths
$\rho_0^{(J)},\ldots,\rho_n^{(J)}$.
We call $(\de_j)_{j=1}^{n-1}$ the \textbf{$\delta$-sequence} and $(\rho_j)_{j=0}^{n}$ the \textbf{$\rho$-sequence}. \eqref{red:scales} gives a correspondence between them. We call $\de=\prod_{j=1}^{n-1}\delta_j$ the \textbf{angular scale}.
Choose $\nu_j\in\{-1,0,1\}$, $1\leq j\leq n-1$, with
$\nu_j=0$ if and only if $\delta_j=1$, and set
$\nu_n:=0$ and $\nu_{n+1}:=0$.
The pair $(\vec\delta,\vec\nu)$ is called \textbf{admissible} at scale $R$. 

Fix constants $0<c<1<C$, independent of the scales. The
frequency plank at $s$ is
\begin{equation}\label{red:plates}
 p_s:=\left\{\sum_{j=0}^{n}b_j\mathbf e_{j+1}(s):
 \begin{array}{ll}
 |b_j|\leq C\rho_j,&0\leq j\leq n,\ \nu_{j+1}=0,\\
 c\rho_j\leq\nu_{j+1}b_j\leq C\rho_j,&0\leq j\leq n,\ \nu_{j+1}\ne0
 \end{array}\right\}.
\end{equation}
For a parameter $s$ at scale $\delta$, let
\begin{equation}
 \begin{gathered}
 q_s:=\left\{\sum_{j=0}^n b_j\mathbf e_{j+1}(s):
 |b_j|\leq C\rho_j,\ 0\leq j\leq n\right\},\\
 q_s^*:=\{x:|x\cdot\mathbf e_{j+1}(s)|\leq\rho_j^{-1},\ 0\leq j\leq n\}.
 \end{gathered}
\end{equation}
A translate of the dual box is
denoted by $T:=x_T+q_s^*$.
We call $s$ the \textbf{angular parameter} of $p_s$ and $(\rho_j)_j$ its \textbf{widths}.
In \cite{GMO2026}, $p_s$ is called a $(\vec\delta,\vec\nu)$-plank.
An admissible enlargement means that $c$ is replaced by a smaller
constant $c'>0$ and $C$ by a larger constant. Thus the nonzero
signed coordinates are still separated from zero. 

\begin{definition}\label{exp:adapted-bump}
Let $p:=\xi_p+A^{-T}[-1,1]^m$ be a box, where $\det A\ne0$.
A function $\varphi_p\in C_c^\infty(\mathbb R^m)$ is a smooth bump
function adapted to $p$ if
\begin{equation}
 \operatorname{supp}\varphi_p\subset \xi_p+CA^{-T}[-1,1]^m,\qquad
 |\partial_u^\alpha[\varphi_p(\xi_p+A^{-T}u)]|\leq C_\alpha
\end{equation}
for every multi-index $\alpha$, with $C$ and $C_\alpha$ independent
of $\xi_p$ and $A$.
\end{definition}

For every multi-index $\beta=(\beta_0,\ldots,\beta_n)\in\mathbb N_0^{n+1}$, write
\begin{equation}
 D_s^\beta:=\prod_{i=0}^{n}
                  (\mathbf e_{i+1}(s)\cdot\nabla_\zeta)^{\beta_i}.
\end{equation}
We use smooth bump functions $\phi_{p_s}$ uniformly adapted to $p_s$
and supported in a fixed admissible enlargement of $p_s$.
By Definition~\ref{exp:adapted-bump} and the chain rule,
\begin{equation}\label{red:adapted}
 |D_s^\beta\phi_{p_s}(\zeta)|
 \leq C_\beta\prod_{i=0}^{n}\rho_i^{-\beta_i}.
\end{equation}
The constants $C_\beta$ are uniform in $R,s$, and the admissible
parameters.

Let $\mathbb{I}_\delta$ be a $\delta$-net of $[-1,1]$, by which we
mean a maximal $\delta$-separated subset of $[-1,1]$. Thus distinct
$s,s'\in\mathbb{I}_\delta$ satisfy $|s-s'|\geq\delta$, and every
$t\in[-1,1]$ satisfies $|t-s|\leq\delta$ for some
$s\in\mathbb{I}_\delta$. 
For each $s\in \mathbb{I}_\delta$, choose $|a_s|\leq1$ and $u_s\in[1,2]$.
Put
\begin{equation}\label{red:multipliers}
 m_{R,s}(\zeta):=a_s e^{-iRu_s\Gamma(s)\cdot\zeta}\phi_{p_s}(\zeta),
 \qquad M_R:=\sum_{s\in \mathbb{I}_\delta}m_{R,s}.
\end{equation}
We have $\operatorname{supp}m_{R,s}\subset\operatorname{supp}\phi_{p_s}$.
By \eqref{red:last-vector},
\begin{equation}
 (\mathbf e_i(s)\cdot\nabla_\zeta)e^{-iRu_s\Gamma(s)\cdot\zeta}
 =
 \begin{cases}
  0,&1\leq i\leq n,\\
  -iRu_s|\Gamma(s)|e^{-iRu_s\Gamma(s)\cdot\zeta},&i=n+1.
 \end{cases}
\end{equation}
Since $R=\rho_n^{-1}$, the product rule and
\eqref{red:adapted} give
\begin{equation}
 |D_s^\beta m_{R,s}(\zeta)|
 \lesssim \prod_{i=0}^{n}\rho_i^{-\beta_i}.
\end{equation}
Define
\begin{equation}\label{red:operator}
 \mathsf{T}f(x,t):=\mathcal F^{-1}_{\xi,\tau}
       \big[(\mathcal F_x f)(\xi)M_R(\xi,\tau)\big](x,t).
\end{equation}
Here $\mathcal F_x$ acts on the $n$ spatial variables, and
$\mathcal F^{-1}_{\xi,\tau}$ on all $n+1$ frequency variables. Equivalently,
\begin{equation}\label{red:translated-form}
 \mathsf{T}f(z)=\sum_{s\in \mathbb{I}_\delta}a_s
       \mathcal F^{-1}_{\xi,\tau}
       \big[(\mathcal F_x f)(\xi)\phi_{p_s}(\xi,\tau)\big]
       (z-Ru_s\Gamma(s)).
\end{equation}
This is the operator in (9.2) of \cite{GMO2026}.

\begin{theorem}\label{thm:translated-plates}
Let $n<p<\infty$ and $C_0\geq1$. There exists $\eta>0$ such that for every admissible pair at scale $R$, every Katz--Tao
$(1,n,C_0)$-set $E\subset\mathbb R^n\times[R,2R]$,
\begin{equation}\label{red:unit-estimate}
 \|\mathsf{T}f\|_{L^p(E)}
 \lesssim\delta^{-1}R^{-1-\eta}\|f\|_{L^p(\mathbb R^n)}
\end{equation}
for every $f$ with $\operatorname{supp}\widehat f\subset
\{1/2\leq|\xi|\leq2\}$. 
\end{theorem}

Theorem~\ref{thm:weighted-helical} is now reduced to a Fourier restriction-type estimate (Theorem~\ref{thm:translated-plates}).

\begin{proposition}\label{red:plank-to-weighted}
Theorem~\ref{thm:translated-plates} implies Theorem~\ref{thm:weighted-helical}.
\end{proposition}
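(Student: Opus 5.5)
We rescale Theorem~\ref{thm:weighted-helical} to frequency scale $1$ and decompose $\widehat\mu$ following Gan--Maldague--Oh~\cite[Section~3]{GMO2026}, so that each piece is an operator $\mathsf T$ as in \eqref{red:translated-form}.

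\textbf{Rescaling.} Put $R=2^k$ and $g(y):=f(R^{-1}y)$, so that $\operatorname{supp}\widehat g\subset\{1/2\leq|\xi|\leq2\}$. We have $A_tf(x)=A_{Rt}g(Rx)$. The image $\tilde E:=\{(Rx,Rt):(x,t)\in E\}\subset\mathbb R^n\times[R,2R]$ is a Katz--Tao $(1,n,C_0)$-set. Counting Jacobians, $\|A_tf\|_{L^p(E)}=R^{-(n+1)/p}\|A_tg\|_{L^p(\tilde E)}$ and $\|f\|_p=R^{-n/p}\|g\|_p$. The target bound is therefore
\[
\|\vartheta(t/R)A_tg\|_{L^p(\tilde E)}\lesssim R^{-\eta}\|g\|_p .
\]
Writing $\vartheta(t/R)A_tg$ as $(g\otimes\delta_0)*\mu_R$ with $\mu_R$ the dilate of $\mu$, its space-time Fourier transform is $\widehat g(\xi)\widehat\mu(R\xi,R\tau)$. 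Since $|\xi|\sim1$ on $\operatorname{supp}\widehat g$, we may also insert a cutoff to $|\tau|\lesssim 1$, because $\widehat\mu(R\cdot)$ decays rapidly elsewhere.

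\textbf{Decomposition of $\widehat\mu(R\cdot)$.} On $|\zeta|\sim1$ we take the GMO decomposition. For each admissible pair $(\vec\delta,\vec\nu)$ at scale $R$, where the $\delta_j$ are dyadic so that there are $O((\log R)^{n})$ pairs, a partition of unity writes $\widehat\mu(R\zeta)$ as a sum of pieces. Each piece is supported on a plank $p_s$, $s\in\mathbb I_\delta$. Stationary phase in $s$ (the critical point where $\Gamma(s)\cdot\zeta=0$) and the $u$-integral give each piece the form
\[
c\,\delta\, e^{-iRu_s\Gamma(s)\cdot\zeta}\phi_{p_s}(\zeta)
\]
up to a $u$-average. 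Here $\delta$ is the length of the $s$-interval where the phase is nonstationary at the relevant order, which is exactly GMO's characterization of decay by the angular scale. The $u$-integral is handled by writing $\vartheta(u)$-averages. Concretely, $\int\vartheta(u)e^{-iRu\Gamma\cdot\zeta}\,du$ together with the $s$-stationary phase yields amplitudes $a_s$. We absorb the smooth $u$-dependence by Minkowski over $u_s\in[1,2]$, which costs at most a factor $R^{\epsilon}$ after discretizing $u$ at scale $R^{-1}$, or no loss if we expand the amplitude in a Fourier series. This reproduces exactly the multiplier \eqref{red:multipliers} with $|a_s|\lesssim1$, times a prefactor $\delta\cdot R^{-1}$. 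The factor $R^{-1}$ arises from the $u$-integration at frequency $R$ in the $\Gamma$ direction, matching $\rho_n=R^{-1}$. Symbol remainders with extra decay are handled the same way or trivially.

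\textbf{Conclusion and main obstacle.} Theorem~\ref{thm:translated-plates} bounds each piece by $\delta\cdot\delta^{-1}R^{-1-\eta}\|g\|_p$ on $\tilde E$, up to the normalization constants absorbed above. Summing over the $O((\log R)^n)$ admissible pairs loses only logarithms, which are absorbed by halving $\eta$. Undoing the rescaling then gives \eqref{conj:weighted-averaging}.

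The main obstacle is the precise bookkeeping in the second step. We must verify that GMO's asymptotic expansion of $\widehat\mu$ produces pieces of exactly the form \eqref{red:multipliers}, with amplitudes bounded by $\delta R^{-1}$ times an adapted bump, and with the $u$-translation parameter $u_s$ fixed rather than averaged. For this we would quote \cite[Section~3 and (9.2)]{GMO2026} directly: there the reduction to their operator (9.2) is carried out for unweighted $L^p$. We would observe that their argument is pointwise in the multiplier, so it transfers verbatim to $L^p(\tilde E)$ norms. The only place any measure-theoretic property of $\tilde E$ enters is through Theorem~\ref{thm:translated-plates} itself.
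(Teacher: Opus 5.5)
Your overall route is the paper's. You quote the Gan--Maldague--Oh decomposition of $\widehat\mu$ (the paper cites \cite[Theorem~3.22 and Section~3.4]{GMO2026}, with the weighted transfer from \cite[Section~3.1]{OW2026}), rescale to unit frequency, apply Theorem~\ref{thm:translated-plates} piece by piece, and absorb the $O((\log R)^{n})$ admissible pairs. The only difference is that you rescale before decomposing rather than after. However, once the decomposition is quoted, the remaining content of this proposition is the normalization bookkeeping, and yours is wrong in two places that do not cancel.

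First, $\vartheta(t/R)A_tg$ is not $(g\otimes\delta_0)*\mu_R$ with $\widehat{\mu_R}=\widehat\mu(R\,\cdot)$. Because $\delta_0(t/R)=R\,\delta_0(t)$, one has $\vartheta(t/R)A_tg=R\,(g\otimes\delta_0)*\mu_R$, so the space-time transform is $R\,\widehat g(\xi)\widehat\mu(R\xi,R\tau)$. This is the factor $R$ in the paper's identity $\mathcal F^{-1}[\cdots](Z/R-\mathbf n_s)=R\,\mathcal F^{-1}[\cdots](Z-R\mathbf n_s)$.

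Second, the pieces of $\widehat\mu(R\zeta)$ have amplitude $\delta$, not $\delta R^{-1}$. Doing the $u$-integral first gives $\widehat\mu(R\zeta)=\int\chi(s)\widehat\vartheta(R\Gamma(s)\cdot\zeta)\,\mathrm ds$. On the planks its size is the measure of $\{s:|\Gamma(s)\cdot\zeta|\lesssim R^{-1}\}$, i.e.\ $\sim\delta$. The relevant set is this sublevel set, not a critical point of the phase. The $u$-integration produces the width $\rho_n=R^{-1}$ of $p_s$ in the $\Gamma(s)$-direction, not an amplitude factor. That width is exactly what produces the $R^{-1}$ in Theorem~\ref{thm:translated-plates}, through $\sup_t\|K_s(\cdot,t)\|_1\lesssim R^{-1}$ in \eqref{c13all:kernel-bound}. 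Counting it again as an amplitude is double counting.

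With the correct factors each piece is $R\delta\,\mathsf Tg$. Theorem~\ref{thm:translated-plates} then gives $R\delta\cdot\delta^{-1}R^{-1-\eta}\|g\|_p=R^{-\eta}\|g\|_p$, which is precisely your target with no slack. Your write-up instead asserts the pieces are $\delta R^{-1}\mathsf Tg$ and then bounds them by $\delta\cdot\delta^{-1}R^{-1-\eta}\|g\|_p$, which is internally inconsistent. Either reading would yield $\|\vartheta(t/R)A_tg\|_{L^p(\widetilde E)}\lesssim R^{-1-\eta}\|g\|_p$ or better, and that is false.

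To see this, take the Katz--Tao $(1,n,C)$-set $\widetilde E=\mathbb R^n\times[R,R+1]$ and $\widehat g$ a fixed nonnegative radial bump supported in $\{1/2<|\xi|<2\}$. Then $|A_tg|\gtrsim R^{-1}$ on a unit neighbourhood of a piece of the dilated curve $t\gamma$, a set of measure $\gtrsim R$. So the left side is $\gtrsim R^{-1+1/p}$.

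The apparent gain of a power of $R$ should have flagged the error. With the factor $R$ restored and the amplitude corrected to $\delta$, your argument goes through as in the paper.
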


\begin{proof}[Proof of Proposition~\ref{red:plank-to-weighted}]
Put $R=2^k$. Let $P_s:=Rp_s$ and define
\begin{equation}
 \Phi_{P_s}(\zeta):=\phi_{p_s}(\zeta/R),
 \qquad \phi_{p_s}(\zeta)=\Phi_{P_s}(R\zeta).
\end{equation}
Then $\Phi_{P_s}$ is supported in a fixed admissible enlargement of $P_s$
and satisfies
\begin{equation}
 |D_s^\beta\Phi_{P_s}(\zeta)|
 \leq C_\beta\prod_{i=0}^n(R\rho_i)^{-\beta_i}.
\end{equation}
Put $\mathbf n_s:=u_s\Gamma(s)$.
By the Fourier reduction in
\cite[Theorem~3.22 and Section~3.4]{GMO2026}, in the weighted
form of \cite[Section~3.1]{OW2026}, it suffices to prove, for some
$\eta_0>0$,
\begin{equation}\label{red:original-scale}
 \left\|\sum_{s\in\mathbb{I}_\delta}a_s
 \mathcal F^{-1}_{\xi,\tau}\big[(\mathcal F_x f)(\xi)\Phi_{P_s}(\xi,\tau)\big]
 (\cdot-\mathbf n_s)\right\|_{L^p(E)}
 \lesssim\delta^{-1}R^{-1/p-\eta_0}\|f\|_p,
\end{equation}
uniformly over the admissible parameters and cutoff functions, for
\begin{equation}
 \begin{gathered}
 E\subset\mathbb R^n\times[1,2]
 \quad\text{a Katz--Tao }(R^{-1},n,C_0)\text{-set},\\
 \operatorname{supp}(\mathcal F_x f)\subset\{R/2\leq|\xi|\leq2R\}.
 \end{gathered}
\end{equation}

We apply a linear change of variables. Set
\begin{equation}
 g(y):=f(y/R),\qquad \widetilde E:=RE.
\end{equation}
For $Z\in\mathbb R^{n+1}$ and $r\geq1$,
\begin{equation}
 |\widetilde E\cap B(Z,r)|_1
 =|E\cap B(Z/R,r/R)|_{R^{-1}}\leq C_0r^n.
\end{equation}
Thus $\widetilde E\subset\mathbb R^n\times[R,2R]$ is a
Katz--Tao $(1,n,C_0)$-set. Also,
\begin{equation}
 (\mathcal F_y g)(\xi)=R^n(\mathcal F_x f)(R\xi),\qquad
 \|g\|_p=R^{n/p}\|f\|_p,
 \qquad\operatorname{supp}(\mathcal F_y g)\subset\{1/2\leq|\xi|\leq2\}.
\end{equation}
Changing all $n+1$ frequency variables gives
\begin{equation}\begin{aligned}
 &\mathcal F^{-1}_{\xi,\tau}\big[(\mathcal F_x f)(\xi)\Phi_{P_s}(\xi,\tau)\big]
                 (Z/R-\mathbf n_s) \\
 &\quad=(2\pi)^{-(n+1)}R^{n+1}
  \int e^{i(Z-R\mathbf n_s)\cdot(\zeta,\sigma)}
       (\mathcal F_x f)(R\zeta)\Phi_{P_s}(R\zeta,R\sigma)
       \,\mathrm{d}\zeta\,\mathrm{d}\sigma \\
 &\quad=R\mathcal F^{-1}_{\zeta,\sigma}\big[(\mathcal F_y g)(\zeta)\phi_{p_s}(\zeta,\sigma)\big]
                 (Z-R\mathbf n_s).
\end{aligned}\end{equation}
Summing over $s\in\mathbb{I}_\delta$ with coefficients $a_s$ and
changing variables, we obtain from Theorem~\ref{thm:translated-plates}
\begin{equation}\begin{aligned}
 \left\|\sum_{s\in\mathbb{I}_\delta}a_s
 \mathcal F^{-1}_{\xi,\tau}\big[(\mathcal F_x f)(\xi)\Phi_{P_s}(\xi,\tau)\big]
 (\cdot-\mathbf n_s)\right\|_{L^p(E)}
 &=R^{1-(n+1)/p}\|\mathsf{T}g\|_{L^p(\widetilde E)}\\
 &\lesssim\delta^{-1}R^{-(n+1)/p-\eta}\|g\|_p
 =\delta^{-1}R^{-1/p-\eta}\|f\|_p.
\end{aligned}\end{equation}
This proves \eqref{red:original-scale} with $\eta_0=\eta$.
The Fourier reduction therefore yields Theorem~\ref{thm:weighted-helical}.
\end{proof}

\section{Preliminaries}\label{sec:inputs}

This section collects the auxiliary estimates used in the proof of our main estimate.

\subsection{Multiscale \texorpdfstring{$\ell^{q/2}$}{ell(q/2)} estimates}\label{sec:scales-angular}
Recall the scales in \eqref{red:scale-equation} and \eqref{red:scales}.
For $0\leq j\leq n-1$, let $\mathcal J_j$ be the dyadic partition of
$[-1,1]$ into intervals of length comparable to $c\prod_{i=1}^{j}\delta_i$, where
$c>0$ is sufficiently small. These partitions are nested, and each
$I\in\mathcal J_{n-1}$ contains at most one point of $\mathbb I_\delta$.
For $0\leq j\leq n-1$ and $I\in\mathcal J_j$, the corresponding
truncated signed plank is given by \eqref{red:plates} at the center of $I$,
with parameters $(\delta_1,\ldots,\delta_j,1,\ldots,1)$ and signs
$(\nu_1,\ldots,\nu_j,0,\ldots,0)$.

\begin{proposition}\label{prop:gmo-unweighted}
Let $H_s$, $s\in\mathbb I_\delta$, be smooth functions.
Assume that $\operatorname{supp}\widehat H_s$ is contained in a fixed
admissible enlargement of $p_s$ for every $s\in\mathbb I_\delta$.
Then, for every $\varepsilon>0$,
\begin{equation}\label{c13all:unweighted-four}
 \Big\|\sum_{s\in\mathbb I_\delta}H_s\Big\|_4^4
 \lesssim R^\varepsilon
 \int\Big(\sum_{s\in\mathbb I_\delta}|H_s|^2\Big)^2.
\end{equation}
Moreover, for $1\leq j\leq n-1$ and
$4\leq q\leq(n+1-j)(n+2-j)-2$,
\begin{equation}\label{c13all:unweighted-stage}
 \int\Big(\sum_{I\in\mathcal J_{j-1}}
       \Big|\sum_{s\in\mathbb I_\delta\cap I}H_s\Big|^{q/2}\Big)^2
 \lesssim R^\varepsilon\delta_j^{-(q/2-2)}
 \int\Big(\sum_{I\in\mathcal J_j}
       \Big|\sum_{s\in\mathbb I_\delta\cap I}H_s\Big|^{q/2}\Big)^2.
\end{equation}
\end{proposition}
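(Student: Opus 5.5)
The plan is to derive both estimates from Gan--Maldague--Oh~\cite{GMO2026}: \eqref{c13all:unweighted-four} from their cone $\ell^{p/2}$ estimate \cite[Theorem~5.3]{GMO2026}, and \eqref{c13all:unweighted-stage} from their multiscale plank estimate \cite[Proposition~9.5]{GMO2026}. The work is to check that our normalizations (the frame $\mathbf e_i(s)$, the widths $\rho_j$, the signed planks $p_s$, and the nested partitions $\mathcal J_j$) agree with theirs up to admissible enlargements. We then need that their constants depend only on the enlargement constants $c,C$ and on the curve.

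For \eqref{c13all:unweighted-four} I would argue directly by biorthogonality. First expand $\|\sum_s H_s\|_4^4=\int|\sum_{s,s'}H_sH_{s'}|^2$. Each product $H_sH_{s'}$ has Fourier support in $p_s+p_{s'}$. The key claim is a bounded-overlap statement: for fixed $(s,s')$, the number of pairs $(s_1,s_1')$ with $(p_s+p_{s'})\cap(p_{s_1}+p_{s_1'})\neq\varnothing$ is $\lesssim R^\varepsilon$, or at least it is bounded after dyadic pigeonholing in $|s-s'|$.

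The geometric input for this claim is that every plank lies in an $O(1)$-neighborhood of the cone $\{\lambda\Gamma(s)^\perp\}$. Moreover, the signed coordinates $\nu_{j+1}b_j\sim\rho_j$ localize each plank near the $j$-th osculating direction. This is the same mechanism as the $L^4$ square function for the light cone and the moment curve. I expect this step to be where the scales $\delta_j$ all enter simultaneously. If the direct count fails, the fallback is to quote \cite[Theorem~5.3]{GMO2026} with $q=4$, where the right-hand side is exactly $\int(\sum|H_s|^2)^2$.

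For \eqref{c13all:unweighted-stage} I would fix $j$ and $I\in\mathcal J_{j-1}$, and rescale by the affine map adapted to the curve near the center of $I$. This is the standard Lorentz-type rescaling associated with the frame $\mathbf e_i$, which maps $\Gamma$ on $I$ to a nondegenerate curve on $[-1,1]$. After rescaling, the truncated planks of $\mathcal J_{j-1}$ become unit-scale objects. The subintervals $I'\in\mathcal J_j$ of $I$, of which there are about $\delta_j^{-1}$, become planks of the first stage in a lower effective dimension. Only the coordinates $j,\dots,n$ remain nontrivial, giving the dimension count $n+1-j$.

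The main step is then a one-stage inequality in that dimension:
\[
\int\Big|\sum_{I'}F_{I'}\Big|^{q}\lesssim R^\varepsilon\delta_j^{-(q/2-2)}\int\Big(\sum_{I'}|F_{I'}|^{q/2}\Big)^2,
\]
valid for $4\le q\le (n+1-j)(n+2-j)-2$. This is the range in which the $\ell^{q/2}$ (small-cap type) estimate for the rescaled moment-curve cone holds. It follows from \cite[Proposition~9.5]{GMO2026}.

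To handle the outer $\ell^{q/2}$ sum over $I$, I would apply the one-stage inequality to each $F_I=\sum_{I'\subset I}F_{I'}$. The left-hand side of \eqref{c13all:unweighted-stage} is $\int(\sum_I|F_I|^{q/2})^2$. Expanding the square gives the diagonal terms $\int|F_I|^q$, which are handled by the local estimate, and off-diagonal terms $\int|F_I|^{q/2}|F_{I''}|^{q/2}$. The off-diagonal terms require the wave-envelope/high--low structure of \cite{GMO2026}; this is exactly why their result is stated with squared $\ell^{q/2}$ sums rather than with the plain $L^q$ norm.

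The main obstacle is therefore the off-diagonal structure together with the verification that the signed planks of \eqref{red:plates} and the nested partitions fit the hypotheses of \cite[Proposition~9.5]{GMO2026}. I would settle this by matching their $(\vec\delta,\vec\nu)$-plank definition line by line, since \eqref{red:plates} is taken from that paper. I would then absorb all admissible-enlargement constants into the implicit constants.
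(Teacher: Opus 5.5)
\textbf{There is a genuine gap in your treatment of \eqref{c13all:unweighted-four}.}

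The key claim is that for fixed $(s,s')$ only $\lesssim R^\varepsilon$ pairs $(s_1,s_1')$ satisfy $(p_s+p_{s'})\cap(p_{s_1}+p_{s_1'})\neq\varnothing$. This fails by a power of $R$, even at fixed separation. Take $n=2$. Then $\delta=\delta_1=R^{-1/2}$, and $p_s$ is a $1\times R^{-1/2}\times R^{-1}$ sector of the cone traced by $\mathbf e_1(s)$; this is the light cone when $\gamma$ is a circle.
\begin{itemize}
\item For $|s-s'|\sim1$, the sumset $p_s+p_{s'}$ contains a box with sides $\sim1,1,\delta$ along $\mathbf e_1(s),\mathbf e_1(s'),\mathbf e_2(s)$. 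So it has volume $\gtrsim\delta$.
\item There are $\sim\delta^{-2}$ such pairs, and all these sumsets lie in a fixed ball.
\item Hence some point lies in $\gtrsim\delta^{-1}=R^{1/2}$ of them.
\end{itemize}
In this case \eqref{c13all:unweighted-four} is exactly the sharp $L^4$ cone square function estimate of Guth--Wang--Zhang \cite{GWZ2020}, which lies well beyond biorthogonality. The same obstruction occurs for every $n$, e.g.\ when only $\delta_{n-1}<1$.

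Your fallback does not close the gap either. In this paper, \cite[Theorem~5.3]{GMO2026} supplies only the case $j=1$ of \eqref{c13all:unweighted-stage}. At $q=4$ it bounds $\|\sum_sH_s\|_4^4$ by the square function over $\mathcal J_1$, which lives at angular scale $\delta_1$, not $\delta$. This is forced by the geometry. The $p_s$ have width $\rho_1=\delta_1$ along $\mathbf e_2(s)$, so they fit into cone planks of angular scale $\sigma$ only if $\sigma\gtrsim\delta_1$. But $\delta_1=\delta$ forces $\delta_2=\cdots=\delta_{n-1}=1$, the outermost situation of Section~\ref{sec:model-case}. Only there does \eqref{sketch:gmo} at $q=4$ suffice.

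\textbf{The missing observation} is that \eqref{c13all:unweighted-four} is a corollary of \eqref{c13all:unweighted-stage}; this is how the paper obtains it.
\begin{itemize}
\item Take $q=4$. It is admissible for every $1\leq j\leq n-1$, since $(n+1-j)(n+2-j)-2\geq4$, and it gives $\delta_j^{-(q/2-2)}=1$.
\item Since $\mathcal J_0$ has $O(1)$ intervals, Cauchy--Schwarz bounds $\|\sum_sH_s\|_4^4$, up to a constant, by $\int(\sum_{I\in\mathcal J_0}|\sum_{s\in\mathbb I_\delta\cap I}H_s|^2)^2$.
\item Chain the stages $j=1,\ldots,n-1$. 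Each $I\in\mathcal J_{n-1}$ contains at most one point of $\mathbb I_\delta$, so you arrive at $\int(\sum_s|H_s|^2)^2$ with loss $R^{(n-1)\varepsilon}$.
\end{itemize}

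\textbf{On \eqref{c13all:unweighted-stage}.} Your conclusion, to quote \cite{GMO2026}, matches the paper, which uses Theorem~5.3 there for $j=1$ and Proposition~9.5 for $j\geq2$. What must be checked is more than matching $p_s$ to their plank definition. You need every grouped sum $\sum_{s\in\mathbb I_\delta\cap I}H_s$, $I\in\mathcal J_j$, to have Fourier support in an admissible enlargement of the truncated signed plank at the center of $I$; this is \cite[Lemmas~3.10 and~3.13]{GMO2026}.

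Drop the diagonal/off-diagonal scheme. A rescaled one-stage $L^q$ bound for each $F_I$ controls only $\sum_I\int|F_I|^q$. The cross terms $\int|F_I|^{q/2}|F_{I''}|^{q/2}$ are the whole content of the estimate, not a correction to it. So the estimate has to be quoted directly in its squared $\ell^{q/2}$ form.
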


The support condition for the grouped functions follows from
\cite[Lemmas~3.10 and~3.13]{GMO2026}.
Estimate \eqref{c13all:unweighted-stage} follows from
\cite[Theorem~5.3]{GMO2026} for $j=1$ and
\cite[Proposition~9.5]{GMO2026} for $j\geq2$.
Iterating it at $q=4$ gives \eqref{c13all:unweighted-four}.

\subsection{Removing the convolution kernels}

In this subsection, we prove the following lemma. This is used frequently throughout the paper.

\begin{lemma}\label{c13all:kernel-removal}
Let $K_s$ be kernels adapted to $q_s^*$ in the sense of
Section~\ref{sec:notations}, with
\begin{equation}
 K_s\geq0,\qquad \int K_s=1,\qquad
 \operatorname{supp}\widehat K_s\subset Cq_s.
\end{equation}
For nonnegative functions $b_s$, $s\in\mathbb I_\delta$, one has
\begin{equation}\label{c13all:kr}
 \int\Big(\sum_{s\in\mathbb{I}_\delta} b_s*K_s\Big)^2
 \lesssim  \delta^{-\varepsilon}
       \int\Big(\sum_{s\in\mathbb{I}_\delta} b_s\Big)^2.
\end{equation}
\end{lemma}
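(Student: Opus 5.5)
We expand the square and reduce the claim to a bound on pairwise overlaps of the kernels. Writing
\begin{equation}
 \int\Big(\sum_s b_s*K_s\Big)^2=\sum_{s,s'}\int\!\!\int b_s(x)b_{s'}(y)\,(K_s*\widetilde K_{s'})(x-y)\,\mathrm dx\,\mathrm dy,
\end{equation}
with $\widetilde K(z)=K(-z)$, we see that (\ref{c13all:kr}) follows once
\begin{equation}
 \sum_{s,s'}\langle b_s, b_{s'}*(\widetilde K_s*K_{s'})\rangle\lesssim\delta^{-\varepsilon}\sum_{s,s'}\langle b_s,b_{s'}\rangle .
\end{equation}
This is not immediate pointwise, because $\widetilde K_s*K_{s'}$ is spread out rather than a delta. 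So we handle the problem multiscale in angle, grouping $s,s'$ by their separation $|s-s'|\sim 2^{-m}$ with $\delta\le 2^{-m}\lesssim1$. There are only $O(\log\delta^{-1})$ dyadic values of $m$, and each scale will cost at most a constant (or $\delta^{-\varepsilon/2}$). That gives the $\delta^{-\varepsilon}$ loss.

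\emph{Alternative route.} A cleaner approach is to use Fourier support and positivity. The function $\sum_s b_s*K_s$ is a sum of nonnegative functions, and $\operatorname{supp}\widehat{b_s*K_s}\subset\operatorname{supp}\widehat b_s\cap Cq_s$. One could try to compare $\sum_s b_s*K_s$ pointwise with $(\sum_s b_s)*\mathcal K$ for a single kernel $\mathcal K$, but the $q_s^*$ have different orientations, so no common kernel works. Instead we use the nested partitions $\mathcal J_j$ and induct on the angular scale.
\begin{itemize}
 \item For $I\in\mathcal J_j$, all $q_s^*$ with $s\in I$ are comparable, up to constants, to the truncated box $q_I^*$ with widths $\rho^{(j)}$. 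This follows from the derivative bounds on $\mathbf e_i$ and the usual Taylor argument in \cite{GMO2026}.
 \item Hence $K_s\lesssim K_s*\mathcal K_{I}$, where $\mathcal K_I$ is an $L^1$-normalized kernel adapted to $q_I^*$. More to the point, $K_s$ is a superposition of translates of kernels at the next coarser scale.
\end{itemize}
The key one-step claim is
\begin{equation}
 \int\Big(\sum_{I\in\mathcal J_{j}}F_I*\mathcal K_I\Big)^2\le C\int\Big(\sum_{I\in\mathcal J_{j}}F_I\Big)^2,\qquad F_I\ge0 .
\end{equation}
To prove it, expand the square. For each pair $I,I'$ use Young's inequality with the bound $\|\widetilde{\mathcal K}_I*\mathcal K_{I'}\|_{\infty}\lesssim|q_I^*\cap q_{I'}^*|$-type estimates. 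Compare with $\langle F_I, F_{I'}*\mathcal K_{I,I'}\rangle$, where $\mathcal K_{I,I'}$ is adapted to the smallest box containing $q_I^*+q_{I'}^*$.

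\emph{Main obstacle.} The hardest step is controlling the cross terms for pairs $I\ne I'$ at separation $2^{-m}$ by the corresponding cross terms without kernels. For nonnegative functions, convolving with a bump can move mass onto a larger set, so pointwise domination fails in general. What I would try first is the dual formulation: by Cauchy--Schwarz it suffices to show that the operator $\{b_s\}\mapsto\sum_s b_s*K_s$ is bounded from the $L^2$ norm of $\sum_s b_s$ (a cone of nonnegative functions) with constant $O(1)$ at each dyadic angular scale. Equivalently, we need that $\sup_x\sum_{s}K_s*g\,(x)$-type averages are controlled by a maximal function of $g$ uniformly in orientation. That is, we need $\|\sup_{s}|g|*K_s\|_{2}\lesssim\delta^{-\varepsilon}\|g\|_2$, which is a Nikodym/Kakeya-type maximal bound for boxes $q_s^*$ whose orientations lie along the nondegenerate frame $\mathbf e_i(s)$. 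In $L^2$ this is a one-parameter family of anisotropic boxes. By the Córdoba-type argument, or by Fourier methods using $\operatorname{supp}\widehat K_s\subset Cq_s$ and the almost-orthogonality of the planks $q_s$ across $\delta$-separated $s$ (curvature of $s\mapsto\mathbf e_i(s)$), it holds with a $\log$ loss. I would finish by writing
\begin{equation}
 \int\Big(\sum_s b_s*K_s\Big)^2=\sum_s\int b_s\,\Big(\sum_{s'}b_{s'}*K_{s'}\Big)*\widetilde K_s\le\int\Big(\sum_s b_s\Big)\sup_s\Big[\Big(\sum_{s'}b_{s'}*K_{s'}\Big)*\widetilde K_s\Big],
\end{equation}
then applying Cauchy--Schwarz and the $L^2$ maximal bound, and absorbing. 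The logarithmic loss is $\lesssim\delta^{-\varepsilon}$.
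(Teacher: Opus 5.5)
Your closing duality step is correct. With $G=\sum_{s'}b_{s'}*K_{s'}$, positivity, Cauchy--Schwarz and absorption reduce \eqref{c13all:kr} to $\|\sup_{s\in\mathbb I_\delta}h*\widetilde K_s\|_2\lesssim\delta^{-\varepsilon}\|h\|_2$ for $h\geq0$. But this reduction gains nothing. Run the same duality backwards: at each $x$, place all of $\sum_s b_s(x)$ on the index $s$ that maximizes $h*\widetilde K_s(x)$. This shows the maximal bound is \emph{equivalent} to the lemma, so the whole content of the lemma sits in the one step you only assert.

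The justification you offer for that step does not hold. The planks $q_s$ are not almost orthogonal across $\delta$-separated $s$. Every $q_s$ contains $B(0,R^{-1})$. More generally, for each dyadic $\sigma\in[\delta,1]$ and each interval $I$ of length $\sim\sigma$, the box $q[\leq\sigma](s_I)$ with widths $\min\{\rho_j,R^{-1}\sigma^{j-n}\}$ lies in $Cq_s$ for all $s\in I$. So frequencies are shared by $\gtrsim\sigma/\delta$ planks at every intermediate scale. Córdoba's $L^2$ argument is designed for $\delta\times1$ tubes in the plane, and $L^2$ Nikodym-type bounds are not automatic: for $\delta$-tubes in all directions in $\mathbb R^d$, $d\geq3$, the $L^2$ norm grows like $\delta^{-(d-2)/2}$. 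For these $(n+1)$-dimensional boxes, with $n$ different side lengths rotating with the frame $\mathbf e_i(s)$, you would need overlap control for pairs at every separation. That is exactly the cross-term problem you flagged as the main obstacle in your first two routes and left open. Your one-step nested claim is likewise just another instance of the lemma at a coarser level.

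The missing ingredient is this multiscale structure. The paper imports it through the $L^2$ wave-envelope estimate \cite[Proposition~4.5]{GMO2026}. Since $\operatorname{supp}\widehat{b_s*K_s}\subset Cq_s$, that estimate bounds $\int|\sum_s b_s*K_s|^2$ by
$\delta^{-\varepsilon}\sum_{\sigma}\sum_{I\in\mathcal I_\sigma}\sum_{U\parallel U_{\sigma,I}}|U|\bigl(\int\sum_{s\in I}(b_s*K_s)w_U\bigr)^2$, with $U_{\sigma,I}=q[\leq\sigma](s_I)^*$. Because $q_s^*\subset CU_{\sigma,I}$ for $s\in I$, one has $\widetilde K_s*w_U\lesssim w'_U$, so the kernels are absorbed into the envelope weights. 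The proof then finishes with:
\begin{itemize}
\item Cauchy--Schwarz against the probability density $w'_U$;
\item the bound $\sum_{U\parallel U_{\sigma,I}}|U|w'_U\lesssim1$;
\item bounded overlap of $\mathcal I_\sigma$ together with $b_s\geq0$;
\item summation over the $O(\log\delta^{-1})$ dyadic values of $\sigma$.
\end{itemize}
If you prefer to keep your maximal-function formulation, you would have to prove the maximal bound with the same geometry. That means splitting each $\widehat K_s$ along the nested boxes $q[\leq\sigma](s)$, and using their stability for $s\in I$ and their bounded overlap over $I\in\mathcal I_\sigma$ at each fixed $\sigma$. This essentially amounts to reproving the wave-envelope input.
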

\begin{proof}
For $\delta\leq\sigma\leq1$ dyadic, we follow the notation in \cite[Section 4.1]{GMO2026} to define
\begin{equation}
 q[\leq\sigma](s):=
 \left\{\sum_{j=0}^n a_j\mathbf e_{j+1}(s):
 |a_j|\leq\min\{\rho_j,R^{-1}\sigma^{j-n}\},\ 0\leq j\leq n\right\}.
\end{equation}
Let $\mathcal I_\sigma$ be an interval cover of $[-1,1]$ by intervals
of length $c\sigma$ and bounded overlap. For $I\in\mathcal I_\sigma$, write
$s_I$ for its center and $U_{\sigma,I}:=q[\leq\sigma](s_I)^*$.
Definition 4.1 and Lemma 4.4 of \cite{GMO2026} give, uniformly in $s,\sigma$ and the admissible tuple,
\begin{equation}
 q[\leq\sigma](s_I)\subset Cq_s\quad(s\in I),
 \qquad q_s^*\subset CU_{\sigma,I}.
\end{equation}

Put $g_s:=b_s*K_s$. Then
$
 \operatorname{supp}\widehat g_s\subset Cq_s.
$
Proposition 4.5 of \cite{GMO2026}, with box constants adjusted for the fixed support enlargements, gives
\begin{equation}\label{exp:envelope-input}
 \int\Big|\sum_{s\in\mathbb{I}_\delta}g_s\Big|^2
 \lesssim  \delta^{-\varepsilon}
 \sum_{\sigma\in2^{\mathbb Z}\cap[\delta,1]}
 \sum_{I\in\mathcal I_\sigma}\sum_{U\parallel U_{\sigma,I}}
 |U|\left(\int\sum_{s\in\mathbb{I}_\delta\cap I}|g_s|w_U\right)^2.
\end{equation}

For $s\in\mathbb{I}_\delta\cap I$, the kernel decay and
$q_s^*\subset CU_{\sigma,I}$ give
$\widetilde K_s*w_U\lesssim w'_U$, where
$\widetilde K_s(x):=K_s(-x)$. Changing the order of integration gives
\begin{equation}
 \int\sum_{s\in\mathbb{I}_\delta\cap I}g_sw_U
 =\sum_{s\in\mathbb{I}_\delta\cap I}\int b_s(\widetilde K_s*w_U)
 \lesssim\int\Big(\sum_{s\in\mathbb{I}_\delta\cap I}b_s\Big)w'_U.
\end{equation}
Since $\int w'_U=1$,
H\"older yields
\begin{equation}
 |U|\left(\int\sum_{s\in\mathbb{I}_\delta\cap I}g_sw_U\right)^2
 \lesssim\int\Big(\sum_{s\in\mathbb{I}_\delta\cap I}b_s\Big)^2|U|w'_U.
\end{equation}
Summing first over $U$ and using
$\sum_{U\parallel U_{\sigma,I}}|U|w'_U\lesssim1$ gives the
estimate with $\int(\sum_{s\in\mathbb{I}_\delta\cap I}b_s)^2$.
By bounded overlap and $b_s\geq0$, we have
\begin{equation}
 \sum_{I\in\mathcal I_\sigma}\Big(\sum_{s\in\mathbb{I}_\delta\cap I}b_s\Big)^2
 \lesssim\Big(\sum_{s\in\mathbb{I}_\delta}b_s\Big)^2.
\end{equation}
Summing over dyadic $\sigma$ in \eqref{exp:envelope-input} and absorbing the logarithmic loss
into $\delta^{-\varepsilon}$ proves \eqref{c13all:kr}.
\end{proof}

\subsection{Wave packets and refined decoupling}\label{sec:refined}

We use the following standard wave packet decomposition.

\begin{proposition}[Wave packet decomposition]\label{exp:packets}
Let $B$ be a rectangular box, and let $F$ be a function satisfying
$\operatorname{supp}\widehat F\subset B$.
One can write
\begin{equation}
 F=\sum_{T\parallel B^*}c_T\psi_T,
 \qquad |\psi_T|\lesssim W_T,\qquad
 \|\psi_T\|_q\sim|T|^{1/q}\quad(q\geq1),
\end{equation}
where
$\operatorname{supp}\widehat{\psi_T}\subset CB$. Here, the constant $C>1$ may be chosen arbitrarily close to one. Moreover, for every $1\leq r<\infty$,
\begin{equation}\label{exp:packet-coefficients}
 |c_T|^r\lesssim_r\int |F(x)|^r w_T(x)\,\mathrm dx.
\end{equation}
\end{proposition}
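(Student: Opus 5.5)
The plan is the standard construction by tiling, smooth partition of unity, and Fourier expansion. First reduce to the case $B=A^{-T}[-1,1]^m$ centered at the origin. The general case follows by modulating $F$ by $e^{-i\xi_B\cdot x}$, which changes neither $|F|$ nor the moduli of the coefficients. Then reduce further to $B=[-1,1]^m$ via the linear change of variables $x\mapsto A x$. Under this map, the tiles $T\parallel B^*$ become translates of a fixed cube, the weights $W_T,w_T$ transform accordingly, and the normalisations $\|\psi_T\|_q\sim|T|^{1/q}$ are preserved because both sides scale by $|\det A|^{1/q}$. So it suffices to treat $\operatorname{supp}\widehat F\subset[-1,1]^m$ with $T$ ranging over the tiling of $\mathbb R^m$ by cubes of side $\sim 1$.

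In the model case, fix a Schwartz function $\beta\geq0$ with $\operatorname{supp}\widehat\beta\subset[-\epsilon,\epsilon]^m$, where $\epsilon=C-1$ is small. Choose it so that $\sum_{k\in\mathbb Z^m}\beta(x-k)\equiv1$; this can be done by taking $\widehat\beta$ to vanish at the nonzero points of $2\pi\mathbb Z^m$ after rescaling the lattice, using Poisson summation with a lattice of spacing $\sim\epsilon^{-1}$. Set $\psi_T:=\beta(\cdot-x_T)$ and $c_T:=F(x_T)$. We cannot simply write $F=\sum_T F\,\beta(\cdot-x_T)$ and take $\psi_T=F\beta(\cdot-x_T)/c_T$, since $c_T$ may vanish. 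Instead, define
\[
\psi_T:=\frac{F\,\beta(\cdot-x_T)}{\|F\beta(\cdot-x_T)\|_\infty},\qquad c_T:=\|F\beta(\cdot-x_T)\|_\infty,
\]
with $\psi_T:=0$ when $c_T=0$. Then $F=\sum_T c_T\psi_T$, and $\widehat{\psi_T}$ is supported in $[-1,1]^m+[-\epsilon,\epsilon]^m\subset CB$.

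The decay bound is where this normalisation has to be checked. We have $|\psi_T|\leq|\beta(\cdot-x_T)|/\|\beta\|_\infty$ only up to the ratio between $\sup|F|$ near $x_T$ and $c_T$, so we need a local-constancy statement for $F$. Since $\widehat F$ is compactly supported in $B$, we can write $F=F*\check\phi$ with $\phi=1$ on $B$, which gives $|F|\lesssim|F|*w$ for a weight $w$ decaying like $(1+|x|)^{-N}$ for $N$ large. Applying this estimate dyadically, $|F(x)|\lesssim\sup_{|y-x_T|\lesssim1}|F(y)|\,(1+|x-x_T|)^{N'}$ holds in an averaged sense. The rapid decay of $\beta$, which is Schwartz, then beats this polynomial growth, and we obtain $|\psi_T|\lesssim W_T$. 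The normalisation itself gives $\|\psi_T\|_\infty=1$, and the bound $\|\psi_T\|_q\gtrsim1$ comes from Bernstein's inequality: the gradient of $\psi_T$ is bounded, so $|\psi_T|\geq1/2$ on a ball of radius $\sim1$. Hence $\|\psi_T\|_q\sim1=|T|^{1/q}$.

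Finally, the coefficient bound \eqref{exp:packet-coefficients} follows from local constancy. Because $c_T=\sup|F\beta(\cdot-x_T)|$ and $\widehat F$ is supported in the unit cube, for $1\leq r<\infty$ we have $|F(x)|^r\lesssim\int|F|^r w_{B(x,1)}$; this is the standard reproducing-formula argument combined with Hölder's inequality when $r\geq1$. Multiplying by $|\beta(x-x_T)|^r$, taking the supremum over $x$, and using that $\sup_x|\beta(x-x_T)|^r w_{B(x,1)}(y)\lesssim w_T(y)$ gives $|c_T|^r\lesssim\int|F|^rw_T$.

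I expect the only delicate step to be the choice of normalisation that makes $|\psi_T|\lesssim W_T$ and the lower bound on $\|\psi_T\|_q$ hold simultaneously. If the sup normalisation above turns out to be awkward, the alternative is to use the conventional choice $\psi_T=\beta(\cdot-x_T)\cdot(F/c_T)$. With $c_T$ chosen as the local maximal average $c_T:=(\int|F|^r w_T)^{1/r}$, this satisfies all the requirements up to constants.
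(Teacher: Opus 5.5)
There is a genuine gap: the decay bound $|\psi_T|\lesssim W_T$ fails for your sup-normalized pieces, and changing the normalization does not rescue it. The inequality you invoke is
\[
 |F(x)|\lesssim\sup_{|y-x_T|\lesssim1}|F(y)|\,(1+|x-x_T|)^{N'}.
\]
It does not follow from local constancy. Local constancy bounds $F$ near $x$ by averages of $|F|$ near $x$, not near $x_T$. A band-limited $F$ can be super-polynomially small on $B(x_T,1)$ and of size one far away.

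Concretely, take $x_T=0$ and $F:=\beta(x_0-\cdot)$ with $|x_0|$ large. This $F$ is admissible, since $\widehat F$ is supported in $[-\epsilon,\epsilon]^m\subset B$. Then $F\beta=\beta(\cdot)\beta(x_0-\cdot)$ is invariant under $x\mapsto x_0-x$. So its maximum is attained at some $x^*$ with $|x^*|\geq|x_0|/2$. There $|\psi_T(x^*)|=1$, while $W_T(x^*)\lesssim|x_0|^{-2M}$.

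Your fallback $c_T=(\int|F|^rw_T)^{1/r}$ does give the decay bound, via local constancy and $(1+|x-y|)^{-N}\leq(1+|x-x_T|)^{N}(1+|y-x_T|)^{-N}$. But for the same $F$ it gives $c_T\gtrsim|x_0|^{-2M}$. On the other hand, $\|F\beta\|_q\lesssim_N|x_0|^{-N}$ for every $N$, because $(1+|x|)(1+|x_0-x|)\geq1+|x_0|$. Hence $\|\psi_T\|_q\to0$, and the lower bound $\|\psi_T\|_q\gtrsim|T|^{1/q}$ fails.

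That lower bound is not cosmetic. Together with the decay bound, it is what gives the hypothesis $|f_T|\lesssim|T|^{-1/p}\|f_T\|_pW_T$ of Proposition~\ref{c13all:rp}. The real obstruction is that $F\beta(\cdot-x_T)$ is not a single wave packet. The Schwartz tail of $\beta(\cdot-x_T)$ picks up the part of $F$ living far from $T$ and creates a second bump of comparable height. So no scalar normalization of these pieces works uniformly in $F$.

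The missing idea is to put all the $F$-dependence into the coefficients and take the $\psi_T$ to be translates of one fixed function. Your discarded first choice, $\psi_T=\beta(\cdot-x_T)$ with $c_T=F(x_T)$, was aiming at this. It fails only because a partition of unity does not reproduce $F$ from its samples; you need a reproducing kernel instead.

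In the normalized case $B=[-1,1]^m$, take $\phi\in C_c^\infty$ with $\phi=1$ on $B$ and $\operatorname{supp}\phi\subset(1+\epsilon)B$, and fix $\lambda<\pi/(1+\epsilon)$. Expand $\widehat F$ in a Fourier series on $[-\pi/\lambda,\pi/\lambda]^m$; its coefficients are the samples $\lambda^mF(\lambda k)$. Multiplying by $\phi$ gives the sampling formula
\[
 F=\lambda^m\sum_{k\in\mathbb Z^m}F(\lambda k)\,\phi^\vee(\cdot-\lambda k).
\]
Choosing $\lambda$ to be the side of $B^*$ divided by a suitable fixed integer, the points $\lambda k$ are the centers of a fixed finite union of tilings by translates of $B^*$. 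Set $\psi_T:=\phi^\vee(\cdot-x_T)$ and $c_T:=\lambda^mF(x_T)$. Then:
\begin{itemize}
\item the decay bound, the two-sided bound $\|\psi_T\|_q=\|\phi^\vee\|_q\sim1$, and $\operatorname{supp}\widehat{\psi_T}\subset(1+\epsilon)B$ are automatic;
\item zeros of $c_T$ are harmless;
\item the coefficient bound is exactly your local-constancy and H\"older argument evaluated at $x=x_T$, valid for all $r\geq1$ at once.
\end{itemize}
Your reductions to the normalized box (modulation and linear change of variables) are correct and transfer this to general $B$.
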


For the following proposition, fix $R\geq1$ and $1\leq J\leq n-1$.
Let $(\vec\delta,\vec\nu)$ be an admissible pair at scale $R$
as in Section~\ref{sec:frame-planks}, with $\delta_j=1$ for
$J<j\leq n-1$, and use the scales in \eqref{red:scales} and the
signed planks in \eqref{red:plates}.
For each $s\in\mathbb I_\delta$, fix a tiling $\mathbb T_s$ of
$\mathbb R^{n+1}$ by translates of $q_s^*$.
Let $\mathbb T$ be a subset of these planks
\begin{equation}
 \mathbb T\subset\bigcup_{s\in\mathbb I_\delta}\mathbb T_s.
\end{equation}
To record the angular parameter of the planks, for each $T\in\mathbb T$, we use $s_T$ to denote the number so that $T\in \mathbb T_{s_T}$. We also use $x_T$ to denote the center of $T$. Hence, each $T$ has the form
\[ T=x_T+q_{s_T}^*. \]

The following estimate is a refined decoupling, and we give a proof in the appendix.

\begin{proposition}[Refined decoupling]\label{c13all:rp}
For every $\varepsilon>0$ and $a_0>0$, the following  holds for every family $\mathbb T$ as above.
Let $f_T$, $T\in\mathbb T$, be smooth functions satisfying
\begin{equation}
 \begin{gathered}
 |f_T(x)|\lesssim |T|^{-1/p}\|f_T\|_p\,W_T(x),\\
 \operatorname{supp}\widehat{f_T}
 \subset \text{a fixed admissible enlargement of }p_{s_T}.
 \end{gathered}
\end{equation}
Suppose $Y$ is a finite union of translates of $[0,1]^{n+1}$,
and every unit box $Q$ in this union satisfies
\begin{equation}
 \#\{T\in\mathbb T:CQ\cap10\delta^{-a_0}T\ne\varnothing\}\leq M,
 \qquad M\geq1.
\end{equation}
Then for $2\leq p\leq(n+1-J)(n+2-J)$ we have
\begin{equation}\label{c13all:rp-estimate}
 \left\|\sum_{T\in\mathbb T}f_T\right\|_{L^p(Y)}
 \lesssim  \delta^{-\varepsilon}
 M^{\frac12-\frac1p}
 \left(\sum_{T\in\mathbb T}\|f_T\|_p^p\right)^{1/p}.
\end{equation}
\end{proposition}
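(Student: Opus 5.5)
We follow the Guth--Iosevich--Ou--Wang scheme for refined decoupling: induct on scales, using the conic $\ell^2$ decoupling of Beltran--Guo--Hickman--Seeger \cite[Section~7.4]{BGHS2021} as a black box. The planks $p_s$, $s\in\mathbb I_\delta$, with $\delta_j=1$ for $j>J$, are $\delta$-separated pieces of a conic neighbourhood of the curve $s\mapsto\mathbf e_{\cdot}(s)$. After the truncation, only the first $J$ widths carry angular information. The decoupling available for them is $\ell^2$ decoupling into the planks at angular scale $\delta$, with loss $\delta^{-\varepsilon}$, in the range $2\leq p\leq(n+1-J)(n+2-J)$; this is the curve decoupling exponent in dimension $n+1-J$. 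We take this as given, together with its rescaled (Lorentz/anisotropic rescaling) versions at intermediate angular scales $\sigma\in[\delta,1]$.

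Set $K:=\delta^{-\varepsilon^2}$ and introduce intermediate angular scales $\sigma_m:=K^{-m}$, $m=0,1,\dots,\log_K\delta^{-1}$. Each $\sigma_m$ is paired with truncated planks $q[\leq\sigma_m]$, in the spirit of the proof of Lemma~\ref{c13all:kernel-removal}, whose duals $U_{\sigma_m,I}$ have sizes between a unit cube and $q_s^*$. At scale $\sigma$, for each interval $I\in\mathcal I_\sigma$ and each $U\parallel U_{\sigma,I}$, let $f_{I,U}:=\sum_{T:\,s_T\in I,\ T\cap U\ne\varnothing}f_T$, localized to $U$. The core step is a one-scale refined estimate: on a unit box $Q\subset Y$, apply $\ell^2$ decoupling from angular scale $\sigma_{m}$ to $\sigma_{m+1}$ at spatial scale $U_{\sigma_{m+1}}$, which costs only $K^{O(\varepsilon)}$. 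By wave packet localization (the hypothesis $|f_T|\lesssim|T|^{-1/p}\|f_T\|_pW_T$ together with the $10\delta^{-a_0}$ enlargement, which absorbs Schwartz tails up to $\delta^{100}$ errors), the number of nonzero pieces contributing near $Q$ is at most $M$. Then Hölder gives $\|\sum_T f_T\|_{L^p(Q)}\lesssim\delta^{-\varepsilon}M^{1/2-1/p}(\sum_T\|f_T\|_{L^p(w_Q)}^p)^{1/p}$ with $p$-orthogonality.

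The straightforward route is to do this in a single step directly on each unit box $Q$ via a local decoupling at scale equal to the largest side of $q_s^*$. That scale is too large for $Q$-local information, which is why the multiscale induction is needed. We run the induction with the inductive quantity
\[
\Big\|\sum_{T}f_T\Big\|_{L^p(Y)}\leq C(\sigma)\,M^{\frac12-\frac1p}\Big(\sum_T\|f_T\|_p^p\Big)^{1/p}.
\]
We pass from $\sigma$ to $\sigma K^{-1}$ by rescaling: a cap $I$ of length $\sigma$ maps under the anisotropic rescaling adapted to $\mathbf e_j(s_I)$ to a problem at angular scale $\delta/\sigma$, and the planks $T$ remain planks of the same type with new widths. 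The overlap number $M$ is measured on the rescaled unit boxes, which are the boxes $U_{\sigma,I}$. Counting through the intermediate scale, the product of the intermediate overlap numbers is at most $M$, up to $\delta^{-O(\varepsilon)}$. This submultiplicativity is the real content of the argument, and it closes the induction with $C(\delta)\lesssim K^{O(1)\cdot\#\text{steps}}\leq\delta^{-O(\varepsilon)}$.

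The main obstacle is this counting step in the anisotropic setting. Specifically, given a unit box $Q$ and an intermediate box $U\ni Q$ of scale $\sigma$, we must check that (number of $\sigma$-caps $I$ whose $U_{\sigma,I}$-tube through $Q$ is nonempty) $\times$ (number of $T$ inside one such tube meeting $U$ after rescaling) is at most $\lesssim M$. The difficulty is that the tubes $U_{\sigma,I}$ at different $I$ have different orientations and nested, non-isotropic shapes. To handle it, we first pigeonhole dyadically: the number of nonzero $T$ per intermediate tube is $\sim$ a fixed $\lambda$, and the number of intermediate tubes through each unit box is $\sim$ a fixed $\mu$, at a cost of $(\log\delta^{-1})^{O(1)}$. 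We then use the containments $q[\leq\sigma](s_I)\subset Cq_s$ and $q_s^*\subset CU_{\sigma,I}$ from Lemma~4.4 of \cite{GMO2026}. These show that every $T$ counted is contained in $C\delta^{-a_0}$ times the tube, and that $\lambda\mu\lesssim M$ on $Q$ because each $T$ meeting $10\delta^{-a_0}T\cap CQ$ is counted once. Finally, we sum over $Q\subset Y$, using $\ell^p$-summation of the local estimates and the bounded overlap of the weights $w_Q$.
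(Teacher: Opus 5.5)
Your overall architecture matches the paper's: a Guth--Iosevich--Ou--Wang induction on scales, the BGHS conic decoupling as a black box, rescaling of caps, and two pigeonholed multiplicities whose product is at most $M$. However, the counting step you call the real content fails as written.

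The boxes $U_{\sigma,I}=q[\leq\sigma](s_I)^*$ are the GMO wave envelopes. Since $q[\leq\sigma](s_I)\subset Cq_s$, they \emph{contain} $q_s^*$. They are comparable to $q_{s_I}^*$ at $\sigma=\delta$ and to an $R$-ball at $\sigma=1$. For $J=1$, $U_{h,I}$ is, up to constants, the image under $A_I^T$ of a cube of side $(\delta/h)^{-n}$: the full physical scale of the rescaled problem, not its unit cells. Knowing that $T$ and $Q$ lie in a common envelope says nothing about whether $10\delta^{-a_0}T$ meets $CQ$. The inclusion $T\subset C\delta^{-a_0}U$ you derive points the wrong way, so $\lambda\mu\lesssim M$ does not follow.

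What is needed is the reverse inclusion at \emph{small} scales. In the paper, the multiplicity $M'$ is measured on the sets $A_I^T(20q)$, with $q$ a rescaled cube of side $L'=(d')^{-2n\kappa}$. Decoupling is applied on cubes of side $L=h^{-2n}$, not on unit boxes. All of these have diameter $\lesssim d^{-3n\kappa}=o(d^{-a})$. Hence every counted $T$ near $Q$, i.e.\ with $b_ad^{-a}T\cap A_I^T(20q)\ne\varnothing$, satisfies $U_Q\subset10d^{-a}T$, which yields $M'M''\le M$ (Lemma~\ref{app:refined-overlap-lemma}).

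For the induction to close, the enlargement must also shrink from one scale to the next. The rescaled hypothesis uses $10(d')^{-a}=10h^ad^{-a}<b_ad^{-a}<10d^{-a}$, and the room $5(1-2^{-a})d^{-a}$ absorbs the intermediate scales. Your proposal uses the hypothesis on $Y$ only with the fixed enlargement $10\delta^{-a_0}$. It never explains why the rescaled families satisfy an overlap hypothesis of the same form.

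Second, the ``rescaled versions at intermediate angular scales'' that you take as given do not follow from the black box. The BGHS estimate applies only to supports of the form \eqref{app:refined-coarse-support}: a single parameter $h$, a first index $k$, the nondegeneracy $\lambda_{k-1}\gtrsim1$, and the range $p\leq(n+1-k)(n+2-k)$. For $J\geq2$ the planks carry independent parameters $\delta_1,\ldots,\delta_J$. For example, if $\delta<\sigma<\delta_1$, the planks in a $\sigma$-cap have width $\rho_1=\delta_1>\sigma$ in the second direction. So they do not lie in the $k=1$ coarse box at scale $\sigma$, and Lemma~\ref{app:refined-rescaling-lemma} requires $d_k\le h$. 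A uniform ladder $\sigma_m=K^{-m}$ ignores this.

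The paper instead tracks parameter tuples $(k;d_k,\ldots,d_J)$. It decouples at $h=d^\kappa$ only when $d_k<d^\kappa$. Otherwise it passes to the next index by H\"older at cost $d_k^{-1}\le d^{-\kappa}$, which happens at most $J$ times. It also checks that the signed lower bounds survive rescaling (the ratios $\lambda_j/r_j$ are invariant) and multiplication by the cutoffs $\eta_q,\chi_Q$. Your proposal never uses the signs $\nu_j$. Without them, pieces from different caps overlap near the vertex and $\ell^2$ decoupling fails.

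Finally, the bookkeeping is off. With $K=\delta^{-\varepsilon^2}$ there are $\varepsilon^{-2}$ steps, so $K^{O(1)\cdot\#\text{steps}}=\delta^{-O(1)}$, not $\delta^{-O(\varepsilon)}$. Each step may lose only $K^{O(\varepsilon)}$ and logarithmic factors.
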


Sharp $\ell^2$ decoupling for nondegenerate curves is due to
Bourgain--Demeter--Guth~\cite{BDG2016}.
Refined estimates for the paraboloid
and the cone in $\mathbb R^3$ appear in
\cite[Theorem~4.2]{GIOW2020} and
\cite[Theorem~2.8]{Harris2022}, respectively. In this proof, we use the conic $\ell^2$ decoupling estimate of
Beltran--Guo--Hickman--Seeger~\cite[Section~7.4]{BGHS2021} as a black box.
They derive this estimate from the decoupling theorem for curves of
Bourgain--Demeter--Guth~\cite{BDG2016}.

\section{A model case in dimension five}\label{sec:model-case}

We present a simple case illustrating the key ideas in the proof of
Theorem~\ref{thm:translated-plates}.
Readers interested only in the general proof may skip this section.
We take $n=5$ and consider only the outermost part, where
\begin{equation}
 \delta_1=R^{-1/5},\qquad \delta_2=\delta_3=\delta_4=1,\qquad
 \delta=\delta_1.
\end{equation}

\subsection{The outermost planks and the desired estimate}

In the normalization of Section~\ref{sec:reduction}, these scales give
\begin{equation}
 \rho_j=\delta^j\quad(0\leq j\leq5).
\end{equation}
Let $\mathbb{I}_\delta$ be a $\delta$-net of $[-1,1]$. The angular parameter is
$s\in\mathbb{I}_\delta$.
In the orthonormal frame $(\mathbf e_i(s))_{i=1}^6$ of
\eqref{red:frame}, the centered frequency plank is
\begin{equation}
 q_s=\left\{\sum_{j=0}^5\lambda_j\mathbf e_{j+1}(s):
                  |\lambda_j|\leq\delta^j\quad(0\leq j\leq5)\right\}.
\end{equation}
Its dual physical plank $q_s^*$ has size
\begin{equation}\label{sketch:physical-plank}
 1\times\delta^{-1}\times\delta^{-2}\times\delta^{-3}
   \times\delta^{-4}\times\delta^{-5}
 =1\times R^{1/5}\times R^{2/5}\times R^{3/5}\times R^{4/5}\times R.
\end{equation}
The angular parameters are $\delta$-separated. For a fixed sign
$\nu_1\in\{-1,1\}$ and fixed $0<c<C$, the signed frequency plank is
\begin{equation}
 p_s=\left\{\sum_{j=0}^5\lambda_j\mathbf e_{j+1}(s):
 c\leq\nu_1\lambda_0\leq C,\quad
 |\lambda_j|\leq C\delta^j\quad(1\leq j\leq5)\right\}.
\end{equation}

Let $F_s$ satisfy $\operatorname{supp}\widehat F_s\subset p_s$
for each $s\in\mathbb{I}_\delta$.
The $\ell^{q/2}$-function estimate in
\cite[Theorem~5.3]{GMO2026} gives
\begin{equation}\label{sketch:gmo}
 \int_{\mathbb R^6}\Big|\sum_{s\in\mathbb{I}_\delta}F_s\Big|^q
 \lesssim  R^\varepsilon\delta^{-(q/2-2)}
 \int_{\mathbb R^6}\Big(\sum_{s\in\mathbb{I}_\delta}|F_s|^{q/2}\Big)^2,
 \qquad 4\leq q\leq28.
\end{equation}
At $q=5$, this has the factor $\delta^{-1/2}=R^{1/10}$.
Let $0\leq w\leq1$ be a weight satisfying the five-dimensional growth condition
$\int_{B(x,t)}w\leq C_0t^5$ for every $x\in\mathbb R^6$ and $t\geq1$. We will obtain
\begin{equation}\label{sketch:weighted-five}
 \int_{\mathbb R^6}\Big|\sum_{s\in\mathbb{I}_\delta}F_s\Big|^5w
 \lesssim  R^\varepsilon
 \int_{\mathbb R^6}\Big(\sum_{s\in\mathbb{I}_\delta}|F_s|^{5/2}\Big)^2.
\end{equation}
The removal of $R^{1/10}$ is the gain needed in this model case.
The goal of this section is the operator estimate
\eqref{sketch:operator-five}.
As shown in Section~\ref{sec:reduction}, the original maximal problem
reduces to the estimate in Theorem~\ref{thm:translated-plates}, whose
outermost $n=5$ case is illustrated by \eqref{sketch:operator-five}.

\subsection{High--low and the weighted Kakeya-type estimate}

Apply Proposition~\ref{exp:packets} to each $F_s$ and write
\begin{equation}
 F_s=\sum_{T\parallel q_s^*}f_T= \sum_{T\parallel q_s^*}c_T\psi_T.
\end{equation}
Each wave packet is essentially supported in $T=x_T+q_s^*$ and Fourier supported in $p_s$.
By dyadic pigeonholing and homogeneity, we work with a finite family $\mathbb T$ satisfying
\begin{equation}
 1\leq|c_T|<2,\qquad \|f_T\|_\infty\asymp1
 \quad(T\in\mathbb T).
\end{equation}
Here $F_s$ still denotes the full original function, rescaled by the same factor as the coefficients.
For a fixed large $L$,
choose nonnegative $W_T$ with
\begin{equation}
 W_T(x)\asymp\left(1+\sum_{j=0}^5\delta^{2j}
 |(x-x_T)\cdot\mathbf e_{j+1}(s)|^2\right)^{-L}.
\end{equation}
We may arrange
\begin{equation}
 |f_T|\lesssim W_T,\qquad
 \|f_T\|_{20}^{20}\lesssim |T|,\qquad
 \sum_{T\parallel q_s^*}W_T\lesssim1.
\end{equation}
We choose $W_T$ so that
$\operatorname{supp}\widehat W_T\subset Cq_s$ whenever
$T\parallel q_s^*$, for a fixed $C$.
The sum $\sum_{T\in\mathbb T}W_T$ is a smoothed count of the wave packets
through a point. The \textit{weighted Kakeya-type estimate} needed below is
\begin{equation}\label{sketch:kakeya-four}
 \int \Big|\sum_{T\in\mathbb T}W_T\Big|^4w\lesssim  R^\varepsilon\int \Big|\sum_{T\in\mathbb T}W_T\Big|^2.
\end{equation}

We now outline the proof of \eqref{sketch:kakeya-four}.
Apply a smooth dyadic frequency partition at scale $\rho_2=\delta^2$
to each $W_T$. Summing the resulting pieces gives
\begin{equation}
 \sum_{T\in\mathbb T}W_T
 =\sum_{T\in\mathbb T}(W_T)_{\mathrm{low}}
 +\sum_{\substack{\ell\in2^{\mathbb Z}\\
                  \delta^2\lesssim\ell\lesssim1}}
       \sum_{T\in\mathbb T}(W_T)_\ell.
\end{equation}
Here $\ell$ runs over dyadic frequency scales
$\delta^2\lesssim\ell\lesssim1$, and, for every $T\in\mathbb T$,
\begin{equation}
 \begin{gathered}
 \operatorname{supp}\widehat{(W_T)_\ell}
 \subset\{\xi:|\xi|\sim\ell\},\qquad
 \operatorname{supp}\widehat{(W_T)_{\mathrm{low}}}
 \subset\{\xi:|\xi|\lesssim\delta^2\}.
 \end{gathered}
\end{equation}
Moreover,
\begin{equation}
 \Big\|\sum_{T\in\mathbb T}(W_T)_{\mathrm{low}}\Big\|_\infty
 \lesssim\delta^{-1}.
\end{equation}
First, the $L^\infty$ bound above gives
\begin{equation}
 \int\Big|\sum_{T\in\mathbb T}(W_T)_{\mathrm{low}}\Big|^4w
 \lesssim\delta^{-2}
 \int\Big|\sum_{T\in\mathbb T}(W_T)_{\mathrm{low}}\Big|^2w.
\end{equation}
To estimate the weighted $L^2$ integral, choose an even, nonnegative smooth kernel $\Psi_1$ satisfying
\begin{equation}
 \Psi_1(x)\sim(1+|x|)^{-10},\qquad \int\Psi_1=1.
\end{equation}
Define $\Psi_L(x):=L^{-6}\Psi_1(x/L)$.
For $L\geq1$, the growth condition in Definition~\ref{def:dimensional-weight} gives
\begin{equation}
 (w*\Psi_L)(x)
 \lesssim L^{-6}\sum_{k=0}^\infty2^{-10k}
       \int_{B(x,2^{k+1}L)}w
 \lesssim L^{-1}.
\end{equation}
In particular, taking $L=\delta^{-2}$ gives
$w*\Psi_{\delta^{-2}}\lesssim\delta^2$.
The low-frequency sum has Fourier support in $B(0,C\delta^2)$, so local constancy gives
\begin{equation}
 \Big|\sum_{T\in\mathbb T}(W_T)_{\mathrm{low}}\Big|^2
 \lesssim
 \Big|\sum_{T\in\mathbb T}(W_T)_{\mathrm{low}}\Big|^2*\Psi_{\delta^{-2}}.
\end{equation}
Multiplying by $w$, integrating, and using Fubini's theorem, we obtain
\begin{equation}\begin{aligned}
 \int\Big|\sum_{T\in\mathbb T}(W_T)_{\mathrm{low}}\Big|^2w
 &\lesssim\int\left(
 \Big|\sum_{T\in\mathbb T}(W_T)_{\mathrm{low}}\Big|^2
 *\Psi_{\delta^{-2}}\right)w\\
 &=\int\Big|\sum_{T\in\mathbb T}(W_T)_{\mathrm{low}}\Big|^2
       (w*\Psi_{\delta^{-2}}) \lesssim\delta^2
 \int\Big|\sum_{T\in\mathbb T}(W_T)_{\mathrm{low}}\Big|^2.
\end{aligned}\end{equation}
Finally, the $L^2$ multiplier bound for the low-frequency cutoff gives
\begin{equation}
 \Big\|\sum_{T\in\mathbb T}(W_T)_{\mathrm{low}}\Big\|_2
 \lesssim\Big\|\sum_{T\in\mathbb T}W_T\Big\|_2.
\end{equation}
Combining these estimates cancels the factors $\delta^{-2}$ and $\delta^2$ and yields
\begin{equation}\label{sketch:low}
 \int\Big|\sum_{T\in\mathbb T}(W_T)_{\mathrm{low}}\Big|^4w
 \lesssim\int\Big|\sum_{T\in\mathbb T}W_T\Big|^2.
\end{equation}

For the other part, we consider only the highest-frequency
case $\ell\sim1$ for simplicity. On the Fourier support of each angular contribution
$\sum_{T\parallel q_s^*}(W_T)_\ell$, write
\begin{equation}
 \begin{gathered}
 \xi=\sum_{j=0}^5\lambda_j\mathbf e_{j+1}(s),\qquad |\xi|\sim1, \qquad
 |\lambda_j|\lesssim\delta^j\quad(1\leq j\leq5).
 \end{gathered}
\end{equation}
We may assume $\delta\ll1$, which implies $|\lambda_0|\sim1$.
Splitting smoothly into $\lambda_0>0$ and $\lambda_0<0$ therefore
gives the original signed planks $p_s$. 
Using $w\leq1$ and applying the $q=4$ case of \eqref{sketch:gmo}, we obtain
\begin{equation}\begin{aligned}
 \int\Big|\sum_{T\in\mathbb T}(W_T)_\ell\Big|^4w
 &\leq\int\Big|\sum_{T\in\mathbb T}(W_T)_\ell\Big|^4\\
 &\lesssim R^\varepsilon
 \int\bigg(\sum_{s\in\mathbb{I}_\delta}
      \Big|\sum_{T\parallel q_s^*}W_T\Big|^2\bigg)^2 \lesssim R^\varepsilon
 \int\Big|\sum_{T\in\mathbb T}W_T\Big|^2.
\end{aligned}\end{equation}
These estimates illustrate the low- and highest-frequency contributions.

\subsection{Refined decoupling}

For dyadic $M\geq1$, let $Y_M$ denote the region where $\sum_{T\in\mathbb T}W_T\sim M$. We ignore the negligible tails
of the wave packets. Since every side length of $T$ is at least $1$, we may assume that $Y_M$ is a union of unit cubes, and each unit cube
$Q\subset Y_M$ meets $O(M)$ planks. For every $x\in Q\cap\{\sum_{T\in\mathbb T}W_T\sim M\}$, we have
\begin{equation}
 \#\{T\in\mathbb T:Q\cap 10\delta^{-a_0}   T\ne\varnothing\}
 \lesssim \delta^{-Ca_0} \sum_{T\in\mathbb T}W_T(x)\lesssim \delta^{-Ca_0} M.
\end{equation}
The $L^{20}$ refined decoupling estimate gives
\begin{equation}\label{sketch:refined-twenty}
 \begin{aligned}
 \int_{Y_M}\Big|\sum_{T\in\mathbb T}f_T\Big|^{20}w
 &\leq\int_{Y_M}\Big|\sum_{T\in\mathbb T}f_T\Big|^{20}\\
 &\lesssim  R^\varepsilon
 M^{20/2-1}\sum_{T\in\mathbb T}\|f_T\|_{20}^{20}
 \lesssim  R^\varepsilon M^9\int \Big|\sum_{T\in\mathbb T}W_T\Big|^2.
 \end{aligned}
\end{equation}
This is Proposition~\ref{c13all:rp} at exponent $p=20$. In the present case
$J=1$, so its permitted range extends to
$(n+1-J)(n+2-J)=5\cdot6=30$; the exponent $20$ is sufficient here.
By H\"older and \eqref{sketch:kakeya-four},
\begin{align}\label{sketch:five-numerology}
 \int_{Y_M}\Big|\sum_{T\in\mathbb T}f_T\Big|^5w
 &\lesssim\left(\int_{Y_M}\Big|\sum_{T\in\mathbb T}f_T\Big|^{20}w\right)^{1/4}
       \left(M^{-4}\int \Big|\sum_{T\in\mathbb T}W_T\Big|^4w\right)^{3/4}\notag\\
 &\lesssim  R^\varepsilon
       \left(M^9\int \Big|\sum_{T\in\mathbb T}W_T\Big|^2\right)^{1/4}
       \left(M^{-4}\int \Big|\sum_{T\in\mathbb T}W_T\Big|^2\right)^{3/4}\notag\\
 &=R^\varepsilon M^{-3/4}\int \Big|\sum_{T\in\mathbb T}W_T\Big|^2.
\end{align}
To recover the right-hand side of \eqref{sketch:weighted-five}, use
$1\leq|c_T|<2$ and apply the local coefficient estimate in
Proposition~\ref{exp:packets} with $r=5/2$.
The decay of the weights and the bounded overlap of the tiling then give
\begin{equation}
 \sum_{\substack{T\in\mathbb T\\T\parallel q_s^*}}W_T(x)
 \leq\sum_{\substack{T\in\mathbb T\\T\parallel q_s^*}}
       |c_T|^{5/2}W_T(x)
 \lesssim\bigl(|F_s|^{5/2}*w_{q_s^*}\bigr)(x).
\end{equation}
Applying Lemma~\ref{c13all:kernel-removal} to $|F_s|^{5/2}$ and the
normalized kernels $w_{q_s^*}$ therefore gives
\begin{equation}\begin{aligned}
 \int\Big|\sum_{T\in\mathbb T}W_T\Big|^2
 &\lesssim\int\Big(\sum_{s\in\mathbb I_\delta}
                    |F_s|^{5/2}*w_{q_s^*}\Big)^2 \lesssim R^\varepsilon
 \int\Big(\sum_{s\in\mathbb I_\delta}|F_s|^{5/2}\Big)^2.
\end{aligned}\end{equation}
The loss is absorbed into $R^\varepsilon$ since $\delta=R^{-1/5}$.
Apply this bound to \eqref{sketch:five-numerology} and sum over $M$.
So we obtain \eqref{sketch:weighted-five}.

\subsection{Proof of Theorem~\ref{thm:translated-plates} in the model case}

We prove Theorem~\ref{thm:translated-plates} in the outermost $n=5$ case.
Fix $p>5$. On $Y_M$, we have
$\big|\sum_{T\in\mathbb T}f_T\big|\lesssim\sum_{T\in\mathbb T}W_T\lesssim M$.
Thus \eqref{sketch:five-numerology} implies
\begin{equation}
 \begin{aligned}
 \int_{Y_M}\Big|\sum_{T\in\mathbb T}f_T\Big|^pw
 \lesssim M^{p-5}\int_{Y_M}\Big|\sum_{T\in\mathbb T}f_T\Big|^5w \lesssim R^\varepsilon M^{p-23/4}
 \int\Big|\sum_{T\in\mathbb T}W_T\Big|^2.
 \end{aligned}
\end{equation}
Since $\sum_{T\parallel q_s^*}W_T\lesssim1$, we have
$M\lesssim\delta^{-1}$.
Repeating the  argument
in the preceding subsection with $r=p/2$ gives
\begin{equation}
 \int\Big|\sum_{s\in\mathbb I_\delta}F_s\Big|^pw
 \lesssim R^\varepsilon\delta^{-\max\{p-23/4,0\}}
 \int\Big(\sum_{s\in\mathbb I_\delta}|F_s|^{p/2}\Big)^2.
\end{equation}

Write $\mathsf{T}=\sum_{s\in\mathbb{I}_\delta}\mathsf{T}_s$, where
\begin{equation}
 \mathsf{T}_sg(z)
 :=a_s[\widehat g(\xi)\phi_{p_s}(\xi,\tau)]^\vee
       (z-Ru_s\Gamma(s)),
\end{equation}
as defined in \eqref{red:translated-form}.
We assume $\operatorname{supp}\widehat g
\subset\{\xi:1/2\leq|\xi|\leq2\}$. To prove the theorem, we localize the estimate. On an $R$-ball $B\subset\mathbb R^6$, choose $\chi_B$ with 
\begin{equation}
 \begin{gathered}
 |\chi_B|\geq1\ \text{on }B,\qquad
 |\chi_B|\lesssim W_B, \qquad
 \operatorname{supp}\widehat\chi_B\subset B(0,cR^{-1}).
 \end{gathered}
\end{equation}
Here $c>0$ is sufficiently small, and $W_B,W_{B^x}$ are the weights
from Section~\ref{sec:notations} for $B$ and its spatial
projection $B^x$.
The kernel bound \eqref{c13all:kernel-bound} gives
\begin{equation}
 \sup_{s\in\mathbb{I}_\delta}\|\chi_B\mathsf{T}_sg\|_\infty
 \lesssim R^{-1}\|g\|_\infty.
\end{equation}
The $L^4$ estimate for the square function
$\big(\sum_{s\in\mathbb{I}_\delta}|\mathsf{T}_sg|^2\big)^{1/2}$ in
\cite[Proposition~9.4]{GMO2026}, recorded in
\eqref{c13all:operator-four}, gives
\begin{equation}
 \begin{aligned}
 \int\Big(\sum_{s\in\mathbb{I}_\delta}|\chi_B\mathsf{T}_sg|^2\Big)^2
 \lesssim\int\Big(\sum_{s\in\mathbb{I}_\delta}|\mathsf{T}_sg|^2\Big)^2W_B 
  \lesssim  R^{-3+\varepsilon}
                         \|g\|_{L^4(W_{B^x})}^4.
 \end{aligned}
\end{equation}
Using
\begin{equation}
 \Big(\sum_{s\in\mathbb I_\delta}|\mathsf T_sg|^{p/2}\Big)^2
 \leq\Big(\sup_{s\in\mathbb I_\delta}|\mathsf T_sg|\Big)^{p-4}
       \Big(\sum_{s\in\mathbb I_\delta}|\mathsf T_sg|^2\Big)^2,
\end{equation}
the preceding weighted estimate and the kernel bounds give
\begin{equation}
 \begin{aligned}
 \int_B|\mathsf Tg|^pw
 &\lesssim R^\varepsilon\delta^{-\max\{p-23/4,0\}}
 \int\Big(\sum_{s\in\mathbb I_\delta}|\chi_B\mathsf T_sg|^{p/2}\Big)^2\\
 &\lesssim R^\varepsilon\delta^{-\max\{p-23/4,0\}}
 (R^{-1}\|g\|_\infty)^{p-4}
 R^{-3}\|g\|_{L^4(W_{B^x})}^4.
 \end{aligned}
\end{equation}
Summing over a bounded-overlap cover of $\mathbb R^5\times[R,2R]$ yields
\begin{equation}
 \int_{\mathbb R^5\times[R,2R]}|\mathsf Tg|^pw
 \lesssim R^{-p+1+\varepsilon}\delta^{-\max\{p-23/4,0\}}
 \|g\|_\infty^{p-4}\|g\|_4^4.
\end{equation}
By a standard interpolation argument, we obtain
\begin{equation}
 \int_{\mathbb R^5\times[R,2R]}|\mathsf Tg|^p w
 \lesssim R^{-p+1+\varepsilon}
 \delta^{-\max\{p-23/4,0\}}
 \int_{\mathbb R^5}|g|^p.
\end{equation}
Since $\delta=R^{-1/5}$, taking $\varepsilon$ sufficiently small, we obtain
\begin{equation}\label{sketch:operator-five}
 \int_{\mathbb R^5\times[R,2R]}|\mathsf Tg|^p w
 \lesssim\delta^{-p}R^{-p(1+\eta_p)}
 \int_{\mathbb R^5}|g|^p
\end{equation}
for some $\eta_p>0$.
Taking $w=1_E$  proves
Theorem~\ref{thm:translated-plates} in this model case.

\section{Weighted Kakeya-type estimates}\label{sec:transfer}

In this section, we introduce the weighted Kakeya-type estimate
(Proposition~\ref{c13all:positive-moments}) and make some preparations for its proof.

\begin{ktdefinition}[$n$-dimensional weight]\label{def:dimensional-weight}
A nonnegative measurable function $w$ on $\mathbb R^{n+1}$ is called
an \emph{$n$-dimensional weight with constant $C_0$} if, for every
$x\in\mathbb R^{n+1}$ and $t\geq1$,
\begin{equation}\label{c13all:weight}
 0\leq w\leq1,\qquad
 \int_{B(x,t)}w(y)\,\mathrm{d}y\leq C_0t^n.
\end{equation}
When the value of $C_0$ is not so important, we simply call $w$ an
$n$-dimensional weight.
\end{ktdefinition}

Recall the scales in \eqref{red:scales}. Let
$\mathbb{I}_\delta$ be a set of  $\delta$-separated points in $[-1,1]$. We use the planks $q_s$ defined in Section~\ref{sec:frame-planks}.
\begin{equation}
 q_s=\left\{\sum_{j=0}^{n}u_j\mathbf e_{j+1}(s):
                   |u_j|\leq\rho_j,\ 0\leq j\leq n\right\},
 \qquad s\in\mathbb{I}_\delta.
\end{equation}
The following is a weighted Kakeya-type estimate, which will be proved in
Section~\ref{sec:positive}.

\begin{proposition}[Weighted Kakeya-type estimate]\label{c13all:positive-moments}
Let $n\geq4$, and let $r$ be an integer with $4\leq r\leq n$.
Let $g_s$, $s\in\mathbb I_\delta$, be nonnegative functions satisfying
\begin{equation}
 \sup_{s\in\mathbb{I}_\delta}\|g_s\|_\infty\leq1,\qquad
 \operatorname{supp}\widehat g_s\subset Cq_s.
\end{equation}
If $w$ is an $n$-dimensional weight, then
\begin{equation}\label{c13all:positive-moment-estimate}
 \int \Big|\sum_{s\in\mathbb I_\delta}g_s\Big|^r w\lesssim 
 R^\varepsilon\delta^{-(r-2)}\rho_{r-2}\int \Big|\sum_{s\in\mathbb I_\delta}g_s\Big|^2.
\end{equation}
\end{proposition}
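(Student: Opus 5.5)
We argue by induction on the integer exponent $r$, following the outline in Section~\ref{intro:new-ingredient}. Write $G:=\sum_{s\in\mathbb I_\delta}g_s$. Since $\|g_s\|_\infty\le1$, we have $\|G\|_\infty\le\#\mathbb I_\delta\lesssim\delta^{-1}$.

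\textbf{Base case $r=4$.} The claim is
\[
\int G^4w\lesssim R^\varepsilon\delta^{-2}\rho_2\int G^2 .
\]
We split $G$ by a smooth frequency cutoff at scale $\rho_2$ into a low part $G_{\rm low}$ (Fourier support in $B(0,C\rho_2)$) and high pieces $G_\ell$, $\rho_2\lesssim\ell\lesssim1$ dyadic.

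For the low part, the model-case argument of Section~\ref{sec:model-case} applies. Local constancy at scale $\rho_2^{-1}$ and the bound $w*\Psi_{\rho_2^{-1}}\lesssim\rho_2$ from \eqref{c13all:weight} give
\[
\int|G_{\rm low}|^2w\lesssim\rho_2\int G^2 .
\]
Combining this with $\|G_{\rm low}\|_\infty\lesssim\delta^{-1}$ yields $\int|G_{\rm low}|^4w\lesssim\delta^{-2}\rho_2\int G^2$.

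For a high piece $G_\ell$, we write $G_\ell=\sum_s(g_s)_\ell$. The support $Cq_s\cap\{|\xi|\sim\ell\}$ is contained in a union of boundedly many signed truncated planks: one locates the largest index $j$ with $\rho_j\gtrsim\ell$ and rescales by $\ell^{-1}$. After grouping nearby $s$ into intervals of $\mathcal J_{j'}$ for a suitable $j'$, these become admissible planks at a smaller scale. We then drop $w\le1$, apply \eqref{c13all:unweighted-four} to reduce to $\int(\sum_I|G_{\ell,I}|^2)^2$, use $\|G_{\ell,I}\|_\infty\lesssim\#(I\cap\mathbb I_\delta)$, and finish with Lemma~\ref{c13all:kernel-removal}. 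The key numerology is that for $\ell\gtrsim\rho_2$ the number of effectively distinct directions at frequency $\ell$ is small enough that this bound is dominated by $\delta^{-2}\rho_2\int G^2$.

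\textbf{Inductive step $r-1\to r$.} We use the same high--low split, now at frequency scale $\rho_{r-2}$. The low part is handled exactly as above:
\[
\int|G_{\rm low}|^rw\lesssim\delta^{-(r-2)}\cdot\rho_{r-2}\int G^2 .
\]
The high pieces are oscillatory, so the induction hypothesis (which is stated for nonnegative $g_s$) cannot be applied to them directly. Instead we pass through wave packets.

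1. Decompose each $(g_s)_\ell$ into wave packets adapted to the (rescaled) planks.
2. Pigeonhole the coefficients and the overlap level $M$, and let $Y_M$ be the region where the packet count is $\sim M$.
3. On $Y_M$, apply refined decoupling, Proposition~\ref{c13all:rp}, at a large exponent. This is allowed since $(n+1-J)(n+2-J)\ge 6$ is large in the relevant ranges.
4. Interpolate by H\"older between this bound and the induction hypothesis at exponent $r-1$, applied to the nonnegative packet count $\sum W_T$, exactly as in \eqref{sketch:five-numerology}. This is the content of Proposition~\ref{c13all:prefix-transfer} as announced.

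The resulting bound for oscillatory sums at exponent $r$ still carries angular losses. We remove them with the multiscale estimate \eqref{c13all:unweighted-stage}. The high-frequency pieces at scale $\ell$ correspond, after rescaling, to planks with a shorter $\delta$-sequence, so the factor $\delta_j^{-(q/2-2)}$ that appears is paid only over the angular scales that survive. Finally we sum over dyadic $\ell$ and $M$, with logarithmic losses absorbed into $R^\varepsilon$.

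\textbf{Main obstacle.} The hardest step is the bookkeeping in the high-frequency part. After grouping directions and rescaling, one must verify that the new planks are admissible, and that the product of the factors coming from refined decoupling, the hypothesis at $r-1$ (with its own $\rho_{r-3}$ for the rescaled widths), and the multiscale losses is at most $\delta^{-(r-2)}\rho_{r-2}$ uniformly in $\ell$. I would first verify this at the extreme scales $\ell\sim1$ and $\ell\sim\rho_{r-2}$, then interpolate, using the monotone structure $\rho_j=\prod_{l\le j}\delta_l^{j+1-l}$.
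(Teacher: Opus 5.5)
\textbf{There is a genuine gap: the step you defer as bookkeeping is the heart of the proof, and the substitutes you propose for it would not work.} Your outline does have the paper's architecture: the high--low split at $\rho_{r-2}$, then rescaled annuli, Proposition~\ref{c13all:prefix-transfer} with the hypothesis at $r-1$ and $p=r$, the multiscale estimates, and kernel removal. Your low-frequency argument is exactly the paper's.

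\textbf{Signs and grouping scale.} The $g_s$ are supported only in the unsigned boxes $Cq_s$. So the rescaled annular support $\{|\lambda|\sim1,\ |\lambda_j|\le\min(1,\rho_j/\ell)\}$ gives no lower bounds on the intermediate coefficients. But \eqref{c13all:unweighted-four}, \eqref{c13all:unweighted-stage} and Proposition~\ref{c13all:rp} all need signed planks: $|\lambda_{j-1}|\sim\widetilde\rho_{j-1}$ with a fixed sign whenever the tuple has $\widetilde\delta_j<1$. Locating the largest $j$ with $\rho_j\gtrsim\ell$ only handles the cap at $1$.
\begin{itemize}
\item The paper manufactures the signs by taking, for each frequency point, the smallest log-concave majorant $\widetilde\rho$ of $(|\lambda_j|)_{j\le r-2}$ pinned at $\widetilde\rho_0=1$ and $\widetilde\rho_{r-2}=\rho_{r-2}/\ell$ (Lemma~\ref{lem:positive-auxiliary-widths}). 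At every strict kink, \eqref{signedplankclaim} gives $|\lambda_j|=\widetilde\rho_j$.
\item This produces $(\log R)^{O(1)}$ classes, not boundedly many. Each class has its own tuple $\widetilde\delta_j$ and its own grouping scale $\widetilde\delta=\prod_{i\le r-2}\widetilde\delta_i\ge\delta$.
\item Already for $r=4$ and $\ell\sim1$, the pieces with $|\lambda_1|\sim\rho_1/K$, $1\le K\le\delta_2^{-1/2}$, have $\widetilde\delta_1=\delta_1/K$, $\widetilde\delta_2=K^2\delta_2$ and $\widetilde\delta=K\delta_1\delta_2$. This is not the interval length of any fixed $\mathcal J_{j'}$.
\item The square-function estimate for these planks is available only over $\widetilde\delta$-separated parameters. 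Grouping at $\widetilde\delta$ costs $(\widetilde\delta/\delta)^2$, which is affordable exactly because $\widetilde\delta^2\le\widetilde\rho_2=\rho_2/\ell$.
\item The numerology is not obtained by checking extreme values of $\ell$ and interpolating. It closes through the per-class identity $(\widetilde\delta/\delta)^{r-2}\prod_{j=1}^{r-2}\widetilde\delta_j^{-(j-1)}=\delta^{-(r-2)}\widetilde\rho_{r-2}=\delta^{-(r-2)}\rho_{r-2}/\ell$. Here $(\widetilde\delta/\delta)^2$ comes from $\|g_\tau\|_\infty\lesssim\widetilde\delta/\delta$ in Lemma~\ref{lem:positive-square-charge}, and the rest from Lemma~\ref{lem:positive-angular-square}.
\end{itemize}

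\textbf{Inductive step.} You cannot run the wave-packet and refined-decoupling argument on the individual $(g_s)_\ell$ at the full auxiliary tuple.
\begin{itemize}
\item Proposition~\ref{c13all:prefix-transfer} with moment exponent $r-1$ requires the tuple to be trivial from index $\lfloor(r-1)/2\rfloor$ on.
\item This truncation is also what keeps the refined-decoupling exponent $q=2r$ within $(n+1-J)(n+2-J)$. For the untruncated tuple with $r=n$ that bound is only $12$, so ``$\ge6$'' is not the relevant check.
\item The paper first groups into $g_I$, $I\in\widetilde{\mathcal J}_J$, with $J=\lfloor(r-1)/2\rfloor-1$. It applies the transfer to obtain $\prod_{i\le J}\widetilde\delta_i^{-(i-1)}$, and only then treats the scales $J<j\le r-2$ by \eqref{c13all:unweighted-stage}.
\item Those multiscale estimates do not remove losses; they incur them. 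The cost is $\widetilde\delta_j^{-(p_j/2-2)}$ plus $L^\infty$ exponent-lowering factors, with exponents capped at $p_j=\min\{r,(n+1-j)(n+2-j)-2\}$. You must verify $r-2-p_k/2\le k-1$ to stay within $\prod_j\widetilde\delta_j^{-(j-1)}$.
\item You also need two further checks. First, the auxiliary tuples must be admissible at a physical scale at most $\ell R\le R$, so that the induction hypothesis, uniform over tuples, applies. Second, you need the kernel domination $|g_{s,\ell,\alpha}|\lesssim g_s(\cdot/\ell)*K_s$, together with Lemma~\ref{c13all:kernel-removal} for the modified planks $q'_s$, to return from $\int(\sum_\tau|g_\tau|^2)^2$ to $\int|\sum_s g_s(\cdot/\ell)|^2$.
\end{itemize}
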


Let us digest the estimate. Since the cardinality of $\mathbb I_{\delta}$ is bounded by $O(\delta^{-1})$, by taking the supremum, we obtain
\begin{equation}
 \int \Big|\sum_{s\in\mathbb I_\delta}g_s\Big|^r w\lesssim 
 R^\varepsilon\delta^{-(r-2)}\int \Big|\sum_{s\in\mathbb I_\delta}g_s\Big|^2.
\end{equation}
The weighted Kakeya-type estimate gives the extra factor $\rho_{r-2}$.
Note that
\begin{equation}
 \delta^{-(r-2)}\rho_{r-2}
 =\prod_{j=2}^{n-1}\delta_j^{-\min\{j-1,r-2\}}\geq1.
\end{equation}
For the worst-decay cone with $n=5$, we have
$(\delta_1,\delta_2,\delta_3,\delta_4)=(\delta,1,1,1)$ and
$\delta^{-2}\rho_2=1$; thus the case $r=4$ recovers
\eqref{sketch:kakeya-four}.
\medskip

The proof of Proposition \ref{c13all:positive-moments} relies on an inductive argument. 
The next proposition uses only one exponent of the preceding estimate,
so it can be applied within the induction in Section~\ref{sec:positive}. We also recall the signed frequency planks in \eqref{red:plates}:
\begin{equation}
 p_s=\left\{\sum_{j=0}^{n}u_j\mathbf e_{j+1}(s):
 \begin{array}{ll}
 |u_j|\leq C\rho_j,&0\leq j\leq n,\ \nu_{j+1}=0,\\
 c\rho_j\leq\nu_{j+1}u_j\leq C\rho_j,&0\leq j\leq n,\ \nu_{j+1}\ne0
 \end{array}\right\},\qquad s\in\mathbb I_\delta.
\end{equation}
Thus $p_s\subset Cq_s$.

\begin{proposition}\label{c13all:prefix-transfer}
Let $r$ be an integer with $4\leq r\leq n$ and let $r<p\leq r+2$.
Assume that the conclusion of
Proposition~\ref{c13all:positive-moments} holds at exponent $r$. Fix an admissible pair
$(\vec\delta,\vec\nu)$ at scale $R$, with $\delta_j=1$ for $j\geq\lfloor r/2\rfloor$.
Let $w$ be an $n$-dimensional weight.
Then 
\begin{equation}\label{c13all:prefix-estimate-local}
 \int_{B_R}\Big|\sum_{s\in\mathbb{I}_\delta}F_s\Big|^p w
 \lesssim  R^\varepsilon\prod_{i=1}^{\lfloor r/2\rfloor-1}\delta_i^{-(i-1)}
 \int\Big(\sum_{s\in\mathbb{I}_\delta}|F_s|^{p/2}\Big)^2
 W_{B_R}
\end{equation}
 for functions $F_s$ with
$\operatorname{supp}\widehat F_s\subset p_s$ for every
$s\in\mathbb I_\delta$, where $p_s$ corresponds to this pair.
\end{proposition}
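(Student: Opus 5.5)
We follow the model case of Section~\ref{sec:model-case}, replacing the exponents $(4,5,20)$ by $(r,p,q)$ with $q$ a large exponent admitted by Proposition~\ref{c13all:rp}.

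\emph{Reduction to unit-size wave packets.} Since $\delta_j=1$ for $j\geq\lfloor r/2\rfloor$, the pair has $J=\lfloor r/2\rfloor-1$. Refined decoupling is then available up to $(n+1-J)(n+2-J)$, which exceeds $2(r+2)$ because $r\leq n$. After localizing to $B_R$ with $\chi_{B_R}$, we apply Proposition~\ref{exp:packets} to each $F_s$, writing $F_s=\sum_{T\parallel q_s^*}c_T\psi_T$. Dyadic pigeonholing in $|c_T|$ costs $R^\varepsilon$; the tiny and huge coefficients are handled trivially. Normalizing, we may assume $1\leq|c_T|<2$ and $|f_T|\lesssim W_T$. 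Put $g_s:=\sum_{T\in\mathbb T,\,T\parallel q_s^*}W_T$. Then $\|g_s\|_\infty\lesssim1$, $g_s\geq0$ and $\operatorname{supp}\widehat g_s\subset Cq_s$, so Proposition~\ref{c13all:positive-moments} at exponent $r$ applies to $(g_s)$. It gives
\begin{equation*}
\int\Big(\sum_s g_s\Big)^r w\lesssim R^\varepsilon\delta^{-(r-2)}\rho_{r-2}\int\Big(\sum_s g_s\Big)^2 .
\end{equation*}
For this pair, $\delta^{-(r-2)}\rho_{r-2}=\prod_{j=2}^{J}\delta_j^{-\min\{j-1,r-2\}}=\prod_{i=1}^{J}\delta_i^{-(i-1)}$, since $j-1\leq r-2$ throughout. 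This is exactly the factor in \eqref{c13all:prefix-estimate-local}.

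\emph{Level sets and H\"older.} Let $Y_M$ be the union of unit cubes on which $\sum_T W_T\sim M$. On $Y_M$ each cube meets $\lesssim\delta^{-Ca_0}M$ of the enlarged planks $10\delta^{-a_0}T$. Proposition~\ref{c13all:rp} at exponent $q$ then gives
\begin{equation*}
\int_{Y_M}\Big|\sum_T f_T\Big|^q\lesssim R^{\varepsilon}M^{q/2-1}\sum_T|T|\lesssim R^\varepsilon M^{q/2-1}\int\Big(\sum_s g_s\Big)^2 ,
\end{equation*}
using $\int W_T^2\sim|T|$ and $\int(\sum_s g_s)^2\gtrsim\sum_T\int W_T^2$ by positivity. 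Interpolating by H\"older between $L^q(w)$ and the weighted $L^r$ bound, with $\sum_T|f_T|\lesssim M$ on $Y_M$, we write $p=\theta q+(1-\theta)r$. This yields
\begin{equation*}
\int_{Y_M}\Big|\sum_T f_T\Big|^pw\lesssim R^\varepsilon\prod_{i=1}^{J}\delta_i^{-(i-1)}\,M^{\theta(q/2-1)-(1-\theta)\cdot0}\cdots
\end{equation*}
A cleaner split is $\int_{Y_M}|\sum f_T|^pw\leq(\int_{Y_M}|\sum f_T|^qw)^{\theta}(M^{-r}\int(\sum g_s)^rw)^{1-\theta}\cdot M^{(1-\theta)r}$. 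The resulting power of $M$ is $\theta(q/2-1)$ with $\theta=(p-r)/(q-r)$, and this is nonnegative. Hence it must be balanced against the $L^2$ side rather than discarded. The fix is to compare with $\int(\sum_s g_s^{\,})^2$ weighted by the correct homogeneity. We use $|c_T|^{p/2}$ instead of $|c_T|^2$: on $Y_M$ the quantity $(\sum_s|F_s|^{p/2})^2$ carries $M^{p-2}$-type mass. We therefore redo the bookkeeping as in the final step of Section~\ref{sec:model-case}, where an extra $M^{p-r}$ is paid. The numerology must close precisely because $p\leq r+2$. Indeed, $p-2\leq r$, so the surplus power of $M$ is at most that absorbed by the Kakeya gain at exponent $r$.

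\emph{Returning to $F_s$.} By \eqref{exp:packet-coefficients} with exponent $p/2$, $g_s\lesssim|F_s|^{p/2}*w_{q_s^*}$. Lemma~\ref{c13all:kernel-removal} then converts $\int(\sum_s g_s)^2$ into $R^\varepsilon\int(\sum_s|F_s|^{p/2})^2W_{B_R}$, the localization producing the weight. Finally we sum over the $O(\log R)$ dyadic values of $M$.

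\emph{Main obstacle.} The main obstacle is the exponent bookkeeping in the H\"older step. We must show that the power of $M$ left after combining refined decoupling (which gives $M^{q/2-1}$) with the weighted $L^r$ Kakeya bound is nonpositive exactly when $r<p\leq r+2$; otherwise one loses $M\lesssim\delta^{-1}$. We would first try $q$ large and verify $\frac{p-r}{q-r}(q/2-1)\leq p-r+\ldots$ by expanding the identity above. If this fails, the fallback is to pigeonhole also on the number of directions per point and use the trivial bound $\sum_s g_s\leq\#\{s\}$ locally.
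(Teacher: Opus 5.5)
Your setup matches the paper's: wave packets normalized to $1\le|c_T|<2$, Proposition~\ref{c13all:positive-moments} at exponent $r$ applied to $g_s=\sum_{T\parallel q_s^*}W_T$, refined decoupling on the level sets $Y_M$, and the return to $\int(\sum_s|F_s|^{p/2})^2$ via \eqref{exp:packet-coefficients} and Lemma~\ref{c13all:kernel-removal}. But there is a genuine gap at exactly the step you flag: the H\"older interpolation is never closed. With the split $p=\theta q+(1-\theta)r$ you keep $|\sum_T f_T|^{(1-\theta)r}$ in the second factor and bound it by $M^{(1-\theta)r}$ on $Y_M$. That exactly cancels the gain $M^{-r(1-\theta)}$ from $\int_{Y_M}w\lesssim M^{-r}\int(\sum_s g_s)^rw$, which is why $M^{\theta(q/2-1)}$ is left over. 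Your proposed repair does not fix this. Reweighting by $|c_T|^{p/2}$ changes nothing, since the coefficients are already normalized to $|c_T|\sim1$. The claim that the bookkeeping closes ``because $p-2\le r$'' is not how the exponents actually balance, and the fallback is not an argument.

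The missing idea is to put all of $|\sum_T f_T|^p$ into the $L^q$ factor and leave only the weight in the other:
\[
\int_{Y_M}\Big|\sum_T f_T\Big|^pw\le\Big(\int_{Y_M}\Big|\sum_T f_T\Big|^qw\Big)^{p/q}\Big(\int_{Y_M}w\Big)^{1-p/q},
\]
followed by $\int_{Y_M}w\lesssim M^{-r}\int(\sum_s g_s)^rw$. The resulting power of $M$ is
\[
\tfrac pq\big(\tfrac q2-1\big)-r\big(1-\tfrac pq\big)=\tfrac p2-r+\tfrac{p(r-1)}{q}.
\]
This vanishes for $q=\frac{2(r-1)p}{2r-p}$ and is negative for larger $q$.

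The paper makes exactly this choice of $q$. It writes the step equivalently as H\"older between $\int_{\sum W_T\ge1}|\sum_sF_s|^q(\sum_T W_T)^{-(q/2-1)}w$ and $\int(\sum_T W_T)^rw$, with $\theta=\frac{2r}{q+2r-2}=\frac pq$, so that $q\theta=p$ and $(q/2-1)\theta=r(1-\theta)$.

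The hypothesis $p\le r+2$ enters only to keep this $q$ admissible. It gives $q\le\frac{2(r-1)(r+2)}{r-2}=2r+6+\frac{8}{r-2}$, which must lie in the range $(n+1-J)(n+2-J)$ of Proposition~\ref{c13all:rp} with $J=\lfloor r/2\rfloor-1$. Your check against $2(r+2)$ is too small, but the range still suffices, since $(n+1-J)(n+2-J)\ge(r/2+2)(r/2+3)$.

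You must also treat the region $\sum_TW_T<1$ separately, because there the factor $M^{-r}$ is a loss rather than a gain. On that region one uses directly $|\sum_sF_s|^pw\lesssim(\sum_TW_T)^p\le(\sum_TW_T)^2$.
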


Combining Propositions~\ref{c13all:positive-moments} and~\ref{c13all:prefix-transfer}, we obtain the following corollary. In Section~\ref{sec:completion}, we use only Corollary~\ref{cor:weighted-oscillatory-sums}.

\begin{corollary}\label{cor:weighted-oscillatory-sums}
Let $n\geq4$ and $n<p\leq n+2$. Fix an admissible pair
$(\vec\delta,\vec\nu)$ at scale $R$, with $\delta_j=1$ for
$\lfloor n/2\rfloor\leq j\leq n-1$.
Let $w$ be an $n$-dimensional weight.
Then
\begin{equation}
 \int_{\mathbb R^{n+1}}
 \Big|\sum_{s\in\mathbb I_\delta}F_s\Big|^p w
 \lesssim R^\varepsilon
 \prod_{j=1}^{\lfloor n/2\rfloor-1}\delta_j^{-(j-1)}
 \int_{\mathbb R^{n+1}}
 \Big(\sum_{s\in\mathbb I_\delta}|F_s|^{p/2}\Big)^2,
\end{equation}
for functions $F_s$ such that
$\operatorname{supp}\widehat F_s$ is contained in a fixed admissible
enlargement of $p_s$ for every $s\in\mathbb I_\delta$.
\end{corollary}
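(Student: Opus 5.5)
The plan is to obtain the corollary by choosing $r=n$ in Proposition~\ref{c13all:prefix-transfer}, using Proposition~\ref{c13all:positive-moments} to supply its hypothesis, and then globalizing the local estimate.

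\textbf{Parameters.} Since $n\geq4$, the integer $r=n$ satisfies $4\leq r\leq n$. The range $n<p\leq n+2$ is exactly $r<p\leq r+2$. The hypothesis $\delta_j=1$ for $\lfloor n/2\rfloor\leq j\leq n-1$ is the hypothesis $\delta_j=1$ for $j\geq\lfloor r/2\rfloor$. Finally, $\prod_{i=1}^{\lfloor r/2\rfloor-1}\delta_i^{-(i-1)}$ is the product in the corollary. Proposition~\ref{c13all:positive-moments} at exponent $r=n$ gives the assumption required by Proposition~\ref{c13all:prefix-transfer}. We therefore obtain \eqref{c13all:prefix-estimate-local} on every ball $B_R$ of radius $R$, for every $n$-dimensional weight $w$.

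\textbf{Enlarged supports.} The corollary allows $\operatorname{supp}\widehat F_s$ to lie in a fixed admissible enlargement of $p_s$, not in $p_s$ itself. An admissible enlargement only changes the constants $c,C$ in \eqref{red:plates} to $c'<c$ and $C'>C$, and the nonzero signed coordinates stay separated from zero. The planks are therefore of the same type, with constants independent of the scales. The proof of Proposition~\ref{c13all:prefix-transfer} is uniform in such constants, as are its inputs: Propositions~\ref{prop:gmo-unweighted} and~\ref{c13all:rp}, which are already stated for enlargements. If one wants to avoid this, one can instead split the enlarged plank smoothly into $O(1)$ planks of the original form, with the same $\rho$-sequence, comparable angular parameters and the same signs, and use the triangle inequality. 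That route loses only a constant, but it requires regrouping angular parameters into a $\delta$-net, so I prefer the uniformity argument.

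\textbf{Globalization.} Cover $\mathbb R^{n+1}$ by $R$-balls $B_R$ with bounded overlap, and sum \eqref{c13all:prefix-estimate-local} over them. The left-hand sides add up to at least $\int_{\mathbb R^{n+1}}|\sum_sF_s|^pw$. On the right, the weights satisfy $\sum_{B_R}W_{B_R}\lesssim1$ pointwise, since $W_{B_R}$ decays like $(1+|x-x_B|/R)^{-2M}$ and the centers form an $R$-lattice. This gives
\begin{equation*}
\int_{\mathbb R^{n+1}}\Big|\sum_{s\in\mathbb I_\delta}F_s\Big|^pw\lesssim R^\varepsilon\prod_{j=1}^{\lfloor n/2\rfloor-1}\delta_j^{-(j-1)}\int_{\mathbb R^{n+1}}\Big(\sum_{s\in\mathbb I_\delta}|F_s|^{p/2}\Big)^2,
\end{equation*}
which is the corollary.

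\textbf{Main obstacle.} The only genuine difficulty is the uniformity in the second step: Proposition~\ref{c13all:prefix-transfer} must apply with enlarged constants, with the implicit constant independent of the scales. This holds as long as the nonzero signed coordinates remain separated from zero, which an admissible enlargement guarantees. Everything else in the argument is bookkeeping.
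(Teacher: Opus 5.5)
Your proposal is correct and matches the paper's derivation. The corollary is Proposition~\ref{c13all:prefix-transfer} at $r=n$, with its hypothesis supplied by Proposition~\ref{c13all:positive-moments} at exponent $n$, summed over a bounded-overlap cover by $R$-balls using $\sum_{B_R}W_{B_R}\lesssim1$ (the ``global consequence'' the paper later invokes); your uniformity argument for the admissible enlargement is also sound, since the constants $0<c<1<C$ in \eqref{red:plates} are arbitrary fixed constants and the enlarged plank has the same form.
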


\subsection{Proof of Proposition~\ref{c13all:prefix-transfer}}\label{sec:proof-prefix-transfer}
For simplicity, we set $J:=\lfloor r/2\rfloor-1$ and $q:=\frac{(2r-2)p}{2r-p}$. Then
$4<q\leq(n+1-J)(n+2-J)$, and $p=\frac{2rq}{q+2r-2}$.
Also, \begin{equation}\delta^{-(r-2)}\rho_{r-2}=\prod_{i=1}^J\delta_i^{-(i-1)}\geq1,\end{equation}
since $\delta_j=1$ for $j>J$.

\medskip
\noindent\textbf{Step 1. Estimate without localization on the right-hand side.}
Fix a ball $B_R$. We first prove
\begin{equation}
 \int_{B_R}\Big|\sum_{s\in\mathbb{I}_\delta}F_s\Big|^p w
 \lesssim R^\varepsilon
 \prod_{i=1}^J\delta_i^{-(i-1)}
 \int_{\mathbb R^{n+1}}
 \Big(\sum_{s\in\mathbb{I}_\delta}|F_s|^{p/2}\Big)^2.
\end{equation}
Apply Proposition~\ref{exp:packets} to write
\begin{equation}
 F_s=\sum_{T\parallel q_s^*}f_T:=\sum_{T\parallel q_s^*}c_T\psi_T.
\end{equation}
By dyadic pigeonholing and homogeneity, we may work with a class
$\mathbb T$ satisfying $1\leq|c_T|<2$, retaining the same notation
for the normalized functions. 
For this  family, we claim that
\begin{align}
 \int_{\sum_{T\in\mathbb T}W_T\geq1}
 \Big|\sum_{s\in\mathbb{I}_\delta}F_s\Big|^p w
 &\lesssim R^\varepsilon\prod_{i=1}^J\delta_i^{-(i-1)}
 \int\Big|\sum_{T\in\mathbb T}W_T\Big|^2,
 \label{c13all:large-square-target}\\
 \int_{\sum_{T\in\mathbb T}W_T<1}
 \Big|\sum_{s\in\mathbb{I}_\delta}F_s\Big|^p w
 &\lesssim\int\Big|\sum_{T\in\mathbb T}W_T\Big|^2.
 \label{c13all:small-square-target}
\end{align}
Adding \eqref{c13all:large-square-target} and \eqref{c13all:small-square-target} gives
\begin{equation}\label{exp:equal-amplitudes}
 \int\Big|\sum_{s\in\mathbb{I}_\delta}F_s\Big|^pw
 \lesssim R^\varepsilon\prod_{i=1}^J\delta_i^{-(i-1)}
 \int\Big|\sum_{T\in\mathbb T}W_T\Big|^2.
\end{equation}
Since $|c_T| \sim 1$, one may see that
\begin{equation}
 \int_{B_R}\Big|\sum_{s\in\mathbb{I}_\delta}F_s\Big|^pw
 \lesssim R^\varepsilon\prod_{i=1}^J\delta_i^{-(i-1)}
 \int\Big|\sum_{T\in\mathbb T}|c_T|^{p/2}W_T\Big|^2.
\end{equation}
By \eqref{exp:packet-coefficients}, bounded overlap, and
Lemma~\ref{c13all:kernel-removal}, using $\delta\geq R^{-1/2}$ from
\eqref{red:scale-equation}--\eqref{red:scales},
\begin{equation}\label{c13all:sampling-energy}
 \begin{aligned}
 \int\Big|\sum_{T\in\mathbb T}|c_T|^{p/2}W_T\Big|^2
 &\lesssim\int\Big(\sum_{s\in\mathbb I_\delta}
           (|F_s|^{p/2}*w_{q_s^*})\Big)^2\lesssim R^\varepsilon
 \int\Big(\sum_{s\in\mathbb I_\delta}|F_s|^{p/2}\Big)^2.
 \end{aligned}
\end{equation}
Thus \eqref{c13all:large-square-target} and \eqref{c13all:small-square-target}
imply the desired estimate on $B_R$.
We now prove them.

\subsubsection{Proof of \eqref{c13all:large-square-target}}
We claim the following two estimates:
\begin{equation}\label{c13all:transfer-positive}
 \int\Big|\sum_{T\in\mathbb T}W_T\Big|^r w
 \lesssim R^\varepsilon\prod_{i=1}^J\delta_i^{-(i-1)}
 \int\Big|\sum_{T\in\mathbb T}W_T\Big|^2.
\end{equation}
\begin{equation}\label{c13all:transfer-refined}
 \int_{\sum_{T\in\mathbb T}W_T\geq1}
 \Big|\sum_{s\in\mathbb I_\delta}F_s\Big|^q
 \Big|\sum_{T\in\mathbb T}W_T\Big|^{-(q/2-1)}w
 \lesssim R^\varepsilon\int\Big|\sum_{T\in\mathbb T}W_T\Big|^2.
\end{equation}
Set $\theta:=\frac{2r}{q+2r-2}$, so that $q\theta=p$ and
$(q/2-1)\theta=r(1-\theta)$.
Assuming \eqref{c13all:transfer-positive} and
\eqref{c13all:transfer-refined}, H\"older's inequality yields
\begin{equation}
 \begin{aligned}
 \int_{\sum_{T\in\mathbb T}W_T\geq1}
 \Big|\sum_{s\in\mathbb I_\delta}F_s\Big|^p w
 &\lesssim R^\varepsilon
 \Big(\prod_{i=1}^J\delta_i^{-(i-1)}\Big)^{1-\theta}
 \int\Big|\sum_{T\in\mathbb T}W_T\Big|^2\\
 &\leq R^\varepsilon\prod_{i=1}^J\delta_i^{-(i-1)}
 \int\Big|\sum_{T\in\mathbb T}W_T\Big|^2.
 \end{aligned}
\end{equation}
So it suffices to prove the claim. First, 
Proposition~\ref{c13all:positive-moments} immediately implies \eqref{c13all:transfer-positive}. To prove the second estimate, we use the refined decoupling. Let $Y_\beta$ be the union of the unit cubes $Q$
meeting $\{\beta\leq\sum_{T\in\mathbb T}W_T<2\beta\}$, for dyadic $\beta\geq1$.
Local constancy  gives, for every such
cube $Q$ and $x\in Q$,
\begin{equation}
 \#\{T\in\mathbb T:CQ\cap10\delta^{-a_0}T\ne\varnothing\}
 \lesssim\delta^{-Ca_0}\sum_{T\in\mathbb T}W_T(x)
 \lesssim\delta^{-Ca_0}\beta.
\end{equation}
 Proposition~\ref{c13all:rp}  gives
\begin{equation}
 \begin{aligned}
 \int_{\beta\leq\sum_{T\in\mathbb T}W_T<2\beta}
 \Big|\sum_{s\in\mathbb I_\delta}F_s\Big|^q w
 &\lesssim R^\varepsilon\beta^{q/2-1}
          \sum_{T\in\mathbb T}|T|\\
 &\lesssim R^\varepsilon\beta^{q/2-1}
          \int\Big|\sum_{T\in\mathbb T}W_T\Big|^2.
 \end{aligned}
\end{equation}
Here the last inequality follows from $W_T\gtrsim1$ on $T$.
Since $\sum_{T\in\mathbb T}W_T\lesssim\delta^{-1}$, multiplying by
$\beta^{-(q/2-1)}$ and summing the dyadic ranges proves
\eqref{c13all:transfer-refined}.

\subsubsection{Proof of \eqref{c13all:small-square-target}}
On $\{\sum_{T\in\mathbb T}W_T<1\}$, since $p>4$ and
$0\leq w\leq1$, we have
\begin{equation}
 \Big|\sum_{s\in\mathbb I_\delta}F_s\Big|^p w
 \lesssim\Big|\sum_{T\in\mathbb T}W_T\Big|^p
 \leq\Big|\sum_{T\in\mathbb T}W_T\Big|^2.
\end{equation}
Integrating proves \eqref{c13all:small-square-target}, completing Step~1
for the original family.

\medskip
\noindent\textbf{Step 2. Localization.}
Fix a ball $B_R$ of radius $ R$. We now prove
\begin{equation}
 \int_{B_R}\Big|\sum_{s\in\mathbb{I}_\delta}F_s\Big|^p w
 \lesssim R^\varepsilon
 \prod_{i=1}^J\delta_i^{-(i-1)}
 \int_{\mathbb R^{n+1}}
 \Big(\sum_{s\in\mathbb{I}_\delta}|F_s|^{p/2}\Big)^2W_{B_R}.
\end{equation}
Choose a function $\chi_{B_R}$ such that 
\begin{equation}
 |\chi_{B_R}|\geq1\text{ on }B_R,\qquad
 \supp\widehat\chi_{B_R}\subset B(0,c/R),\qquad
 |\chi_{B_R}|\lesssim W_{B_R}.
\end{equation}
Moreover,
\begin{equation}
 \operatorname{supp}\widehat{\chi_{B_R}F_s}
 \subset \operatorname{supp}\widehat F_s+B(0,c/R).
\end{equation}
Since $\rho_j\geq\rho_n=R^{-1}$, we have
$c/R\leq c\rho_j$ for every $j$. Take $c$ sufficiently small. We then obtain
\begin{equation}
 \Big(\sum_{s\in\mathbb{I}_\delta}|\chi_{B_R}F_s|^{p/2}\Big)^2
 =|\chi_{B_R}|^p
 \Big(\sum_{s\in\mathbb{I}_\delta}|F_s|^{p/2}\Big)^2
 \lesssim W_{B_R}
 \Big(\sum_{s\in\mathbb{I}_\delta}|F_s|^{p/2}\Big)^2.
\end{equation}
Applying Step~1 to $\chi_{B_R}F_s$ gives
\begin{equation}
 \begin{aligned}
 \int_{B_R}\Big|\sum_{s\in\mathbb{I}_\delta}F_s\Big|^pw
 &\leq\int_{B_R}\Big|\sum_{s\in\mathbb{I}_\delta}\chi_{B_R}F_s\Big|^pw\\
 &\lesssim R^\varepsilon \prod_{i=1}^J\delta_i^{-(i-1)}
       \int\Big(\sum_{s\in\mathbb{I}_\delta}|\chi_{B_R}F_s|^{p/2}\Big)^2.
 \end{aligned}
\end{equation}
Together with the cutoff bound, this proves
\eqref{c13all:prefix-estimate-local}.

\section{Proof of the weighted Kakeya-type estimate}\label{sec:positive}

We prove Proposition~\ref{c13all:positive-moments}. Fix $n\geq4$, $R\geq1$,
and an integer $r$ with $4\leq r\leq n$. We use the scales and 
planks from Section~\ref{sec:transfer}. Let
$\mathbb I_\delta\subset[-1,1]$ be $\delta$-separated, and let $w$ be an
$n$-dimensional weight. For $s\in\mathbb I_\delta$,
suppose that the nonnegative functions $g_s$ satisfy
\begin{equation}
 \sup_{s\in\mathbb I_\delta}\|g_s\|_\infty\leq1,
 \qquad \operatorname{supp}\widehat g_s\subset Cq_s.
\end{equation}
Recall that
\begin{equation}
 q_s=\left\{\sum_{j=0}^n u_j\mathbf e_{j+1}(s):
                   |u_j|\leq\rho_j,\ 0\leq j\leq n\right\}.
\end{equation}
Our goal is to prove
\begin{equation}\label{1027.63}
 \int\Big|\sum_{s\in\mathbb I_\delta}g_s\Big|^r w
 \lesssim R^\varepsilon\delta^{-(r-2)}\rho_{r-2}
 \int\Big|\sum_{s\in\mathbb I_\delta}g_s\Big|^2.
\end{equation}

We argue by induction on $r$.
Section~\ref{sec:positive-radial} normalizes the annular pieces.
Section~\ref{sec:positive-decomposition} decomposes them into signed
frequency pieces and constructs the angular sums.
Lemma~\ref{lem:positive-angular-square} reduces the weighted integral
to an unweighted square function, and Lemma~\ref{lem:positive-square-charge}
controls that square function by the original nonnegative functions.
By \eqref{red:scale-equation} and \eqref{red:scales},
\begin{equation}
 \frac{\rho_j}{\rho_{j-1}}
 =\prod_{i=1}^{\min\{j,n-1\}}\delta_i\quad(1\leq j\leq n).
\end{equation}
Since $0<\delta_i\leq1$, these ratios are nonincreasing.
In particular, $\rho_j^2\geq\rho_{j-1}\rho_{j+1}$ and $ \delta\leq\frac{\rho_j}{\rho_{j-1}}\leq1$ for
$1\leq j<n$, and
\begin{equation}\label{c13all:ratio-properties}
 \begin{gathered}
 \frac{\rho_n}{\rho_{n-1}}=\frac{\rho_{n-1}}{\rho_{n-2}}=\delta,
 \qquad \delta^{-2}\leq R\leq\delta^{-n}.
 \end{gathered}
\end{equation}
We record these here because we use them frequently in this section.

\subsection{High--low decomposition and rescaling}\label{sec:positive-radial}

In this subsection, we apply the high-low decomposition to the function $\sum_s g_s$, and deal with the low frequency part. For the other frequencies, we apply rescaling so that we can do an inductive argument.

\subsubsection{High--low decomposition}

We decompose frequencies of $g_s$ radially. To be precise,
for every $s\in\mathbb I_\delta$, define
\begin{equation}
 g_{s,\mathrm{low}}:=[\varphi(\cdot/\rho_{r-2})\widehat g_s]^\vee,
 \qquad g_{s,\ell}:=[\eta_\ell\widehat g_s]^\vee.
\end{equation}
Here $\ell$ ranges over dyadic scales
$c^{-1}\rho_{r-2}<\ell\leq1$. The cutoffs satisfy
\begin{equation}
 \begin{gathered}
 \operatorname{supp}\widehat g_{s,\mathrm{low}}
   \subset B(0,C\rho_{r-2}),\qquad
 \operatorname{supp}\widehat g_{s,\ell}\subset\{|\xi|\sim\ell\},\\
 \big\|[\varphi(\cdot/\rho_{r-2})]^\vee\big\|_1
   +\|\eta_\ell^\vee\|_1\lesssim1,
 \end{gathered}
\end{equation}
Moreover, choose the cutoffs $\varphi$ and $\eta_\ell$ so that
 the high--low decomposition holds:
\begin{equation}\label{c13all:frequency-decomposition}
 g_s
 =g_{s,\mathrm{low}}
  +\sum_{\substack{c^{-1}\rho_{r-2}<\ell\leq1\\\ell\text{ dyadic}}}
     g_{s,\ell}.
\end{equation}
The reason why we use the scale $\rho_{r-2}$ for the low part is because in the low part we immediately have the desired result. Namely,
by local constancy at scale $\rho_{r-2}^{-1}$ and
\eqref{c13all:weight},
\begin{equation}
 \begin{aligned}
 \int\Big|\sum_{s\in\mathbb I_\delta}g_{s,\mathrm{low}}\Big|^rw
 &\leq\Big\|\sum_{s\in\mathbb I_\delta}g_{s,\mathrm{low}}\Big\|_\infty^{r-2}
       \int\Big|\sum_{s\in\mathbb I_\delta}g_{s,\mathrm{low}}\Big|^2w\\
 &\lesssim\delta^{-(r-2)}\rho_{r-2}
       \int\Big|\sum_{s\in\mathbb I_\delta}g_{s,\mathrm{low}}\Big|^2\\
 &\lesssim\delta^{-(r-2)}\rho_{r-2}
       \int\Big|\sum_{s\in\mathbb I_\delta}g_s\Big|^2.
 \end{aligned}
\end{equation}
This proves \eqref{1027.63} for the low frequency part.
\medskip

It remains to prove, for every dyadic $c^{-1}\rho_{r-2}<\ell\leq1$,
\begin{equation}
 \int\Big|\sum_{s\in\mathbb I_\delta}g_{s,\ell}\Big|^rw
 \lesssim R^\varepsilon\delta^{-(r-2)}\rho_{r-2}
 \int\Big|\sum_{s\in\mathbb I_\delta}g_s\Big|^2.
\end{equation}
We prove this estimate below.

\subsubsection{Rescaling an annulus}

Fix an even nonnegative smooth bump $\Psi_1$ with
$\int\Psi_1=1$ and $\Psi_1(x)\sim(1+|x|)^{-A}$, where
$A>n+1$ is a sufficiently large number. For $L>0$, set $\Psi_L(x):=L^{-(n+1)}\Psi_1(x/L)$.
Fix a dyadic scale $c^{-1}\rho_{r-2}<\ell\leq1$, and define
\begin{equation}\label{c13all:annulus-weight}
 W_\ell(y):=\ell^{-1}(w*\Psi_{\ell^{-1}})(y/\ell).
\end{equation}
Under the change of variables $x=y/\ell$,
\begin{equation}
 \widehat{g_{s,\ell}(\cdot/\ell)}(\zeta)
 =\ell^{n+1}\widehat g_{s,\ell}(\ell\zeta),\qquad
 \operatorname{supp}\widehat{g_{s,\ell}(\cdot/\ell)}
 \subset\{|\zeta|\sim1\}.
\end{equation}
By \eqref{c13all:weight}, $W_\ell$ is an $n$-dimensional weight. Local constancy and the change of
variables reduce the  estimate to
\begin{equation}
 \begin{aligned}
 \int\Big|\sum_{s\in\mathbb I_\delta}g_{s,\ell}(y/\ell)\Big|^r
       W_\ell(y)\,\mathrm{d}y
 &\lesssim R^\varepsilon\delta^{-(r-2)}\frac{\rho_{r-2}}{\ell}
 \int\Big|\sum_{s\in\mathbb I_\delta}g_s(y/\ell)\Big|^2\,\mathrm{d}y.
 \end{aligned}
\end{equation}

For each $s\in\mathbb I_\delta$, the rescaled function satisfies
\begin{equation}
 \operatorname{supp}\widehat{g_{s,\ell}(\cdot/\ell)}
 \subset\left\{\zeta\in\mathbb R^{n+1}: |\zeta|\sim1,\quad
 |\zeta\cdot\mathbf e_{j+1}(s)|\lesssim\rho_j/\ell
 \quad(0\leq j\leq n)\right\}.
\end{equation}
Thus the support is contained in a box whose side lengths, up to fixed
factors, are
\begin{equation}
 1\times\min\{1,\rho_1/\ell\}\times\cdots\times\min\{1,\rho_n/\ell\},
\end{equation}
in the directions $\mathbf e_1(s),\ldots,\mathbf e_{n+1}(s)$, respectively. In the next subsection, we  decompose these supports into
signed planks.

\subsection{Further frequency decomposition}\label{sec:positive-decomposition}

We construct auxiliary scales, decompose the annulus into signed
frequency pieces, and group them at the corresponding angular scale.

\subsubsection{Auxiliary scales}

To obtain the factor $\rho_{r-2}/\ell$ in this estimate, we construct
auxiliary widths with endpoint $\widetilde\rho_{r-2}=\rho_{r-2}/\ell$
and an angular scale $\widetilde\delta\geq\delta$.
The contact property \eqref{signedplankclaim} supplies the signed lower bounds
used in the decomposition below.
For $\xi\in\operatorname{supp}\eta_\ell
\cap\operatorname{supp}\widehat g_s$, put $\zeta:=\xi/\ell$.
Write
\begin{equation}\label{zeta-s}
 \begin{gathered}
 \zeta=\sum_{j=0}^n\lambda_j\mathbf e_{j+1}(s),
 \qquad |\lambda|\sim1, \qquad
 |\lambda_j|\leq\min(1,\rho_j/\ell)
 \end{gathered}
\end{equation}
for $0 \leq j \leq n$.
Since $|\lambda|\sim1$, we have
\begin{equation}
 \max_{r-2\leq j\leq n}|\lambda_j|
 \leq\frac{\rho_{r-2}}{\ell}\ll1
 \quad\Longrightarrow\quad
 \max_{0\leq j<r-2}|\lambda_j|\gtrsim1.
\end{equation}
\begin{lemma}\label{lem:positive-auxiliary-widths}
Let $\lambda$ satisfy \eqref{zeta-s}. There exists a positive sequence
$(\widetilde\rho_0,\ldots,\widetilde\rho_{r-2})$ satisfying
\begin{equation}\label{c13all:hull-data}
 \begin{gathered}
 \widetilde\rho_0=1,\qquad \widetilde\rho_{r-2}=\rho_{r-2}/\ell,\\
 \widetilde\rho_j\geq|\lambda_j|\quad(0\leq j\leq r-2),\\
 \widetilde\rho_j^2\geq\widetilde\rho_{j-1}\widetilde\rho_{j+1}
 \quad(1\leq j<r-2).
 \end{gathered}
\end{equation}
Moreover,
\begin{equation}\label{signedplankclaim}
 \widetilde\rho_j^2>\widetilde\rho_{j-1}\widetilde\rho_{j+1}
 \ \Longrightarrow\ |\lambda_j|=\widetilde\rho_j
 \qquad(1\leq j<r-2).
\end{equation}
\end{lemma}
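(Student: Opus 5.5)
The natural approach is to take $\widetilde\rho$ to be the \emph{least log-concave majorant} of the data. Concretely, I would work with logarithms: set $a_j:=\log|\lambda_j|$ for $1\leq j<r-2$ (with $a_j=-\infty$ if $\lambda_j=0$), and fix the endpoints $a_0:=0$ and $a_{r-2}:=\log(\rho_{r-2}/\ell)$. Define $\log\widetilde\rho$ as the upper concave envelope of the finite point set $\{(j,a_j):0\leq j\leq r-2\}$, evaluated at integers. This is the smallest concave function on $[0,r-2]$ lying above these points, i.e.\ the upper boundary of their convex hull.

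The verification then proceeds in the order of \eqref{c13all:hull-data}.

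First, the endpoints. I need the envelope to equal $a_0$ at $0$ and $a_{r-2}$ at $r-2$. This holds because the concave envelope of finitely many points over the interval $[0,r-2]$ passes through the extreme abscissae: any point of the hull above $x=0$ is a convex combination of data points with abscissa $0$, and the only such point is $(0,a_0)$. So $\widetilde\rho_0=1$ and $\widetilde\rho_{r-2}=\rho_{r-2}/\ell$.

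Second, the interior majorization. For $1\leq j<r-2$ the envelope dominates $a_j$ by construction, so $\widetilde\rho_j\geq|\lambda_j|$. For $j=0$ and $j=r-2$ I use \eqref{zeta-s} directly: $|\lambda_0|\leq 1=\widetilde\rho_0$, and $|\lambda_{r-2}|\leq\min(1,\rho_{r-2}/\ell)\leq\widetilde\rho_{r-2}$. Positivity of all $\widetilde\rho_j$ is automatic, since the envelope lies above the chord joining the two finite endpoints.

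Third, log-concavity: concavity of $\log\widetilde\rho$ at integers gives $\widetilde\rho_j^2\geq\widetilde\rho_{j-1}\widetilde\rho_{j+1}$.

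Fourth, the contact property \eqref{signedplankclaim}. If the inequality is strict at $j$, then $j$ is a vertex (a kink) of the piecewise linear envelope. Vertices of the upper hull of a finite point set are themselves data points, so the envelope value at $j$ is $a_j$, i.e.\ $\widetilde\rho_j=|\lambda_j|$. Here $a_j$ must be finite, since otherwise $j$ could not be a vertex.

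The main obstacle is only bookkeeping. One must handle $\lambda_j=0$ by allowing $a_j=-\infty$, which simply drops that point from the hull. One must also justify the standard fact that the upper concave envelope is piecewise linear with vertices among the data points. I would state that fact as the definition of the envelope: $\log\widetilde\rho_j$ is the maximum of $\theta a_i+(1-\theta)a_k$ over $i\leq j\leq k$ with $j=\theta i+(1-\theta)k$.

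With this definition, strict concavity at $j$ forces the maximizer to be $i=k=j$. Indeed, if the maximum were attained with $i<j<k$, the envelope would be linear on $[i,k]$, and then $\widetilde\rho_j^2=\widetilde\rho_{j-1}\widetilde\rho_{j+1}$, contradicting strictness.
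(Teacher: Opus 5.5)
Your proof is correct and constructs exactly the object the paper uses: the least log-concave majorant of $(|\lambda_j|)$ with endpoints pinned at $1$ and $\rho_{r-2}/\ell$, which is what Figure~\ref{fig:positive-auxiliary-widths} depicts. The paper defines it dually, as the pointwise infimum $\widetilde\rho_j=\inf_{a\in\mathcal C}a_j$ over all sequences satisfying \eqref{c13all:hull-data}. This makes log-concavity automatic, and the contact property follows by lowering the single entry $\widetilde\rho_j$ to $\max\{|\lambda_j|,(\widetilde\rho_{j-1}\widetilde\rho_{j+1})^{1/2}\}$. It thereby avoids the convex-hull facts you flagged: the concavity of your two-point max formula and the fact that hull vertices are data points.
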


\begin{figure}[H]
 \centering
 \resizebox{0.9\textwidth}{!}{
\begin{tikzpicture}[x=1bp,y=1bp,
  every node/.style={inner sep=0pt,outer sep=0pt},
  figone math/.style={font=\fontsize{10}{12}\selectfont,scale=1.40525},
  figone label/.style={font=\fontsize{10}{12}\selectfont,scale=1.304875},
  figone tick/.style={font=\fontfamily{phv}\fontsize{11bp}{13.2bp}\selectfont}]
  \definecolor{figoneblue}{rgb}{0.141176,0.4,0.619608}
  \definecolor{figonedark}{rgb}{0.109804,0.152941,0.192157}
  \definecolor{figonegray}{rgb}{0.529412,0.572549,0.607843}
  \path[use as bounding box] (0,0) rectangle (713,328);
  \begin{scope}[shift={(-42,-49)}]
    \filldraw[fill=figonedark,draw=figonedark,line width=1bp]
      (53,359) circle (3.4bp);
    \node[figone math,anchor=base west,text=figonedark]
      at (65,355) {$\log|\lambda_j|$};
    \draw[figoneblue,line width=2.2bp] (214,359)--(239,359);
    \filldraw[fill=white,draw=figoneblue,line width=1.5bp]
      (226.5,359) circle (4bp);
    \node[figone math,anchor=base west,text=figoneblue]
      at (249,355) {$u_j=\log\widetilde\rho_j$};
    \filldraw[fill=figoneblue,draw=white,line width=1bp]
      (485.5,354.5) rectangle (494.5,363.5);
    \node[anchor=base west,text=figonegray,
      font=\fontfamily{phv}\fontsize{12bp}{14.4bp}\selectfont]
      at (504,355) {fixed endpoints};

    \draw[figonegray,line width=.8bp] (69,326)--(69,77)--(680,77);
    \draw[figonegray,line width=.8bp] (65,309)--(73,309);
    \node[figone tick,anchor=base east,text=figonegray] at (57,305) {0};
    \foreach \j/\xx in {0/92,1/180,2/268,3/356,4/444,5/532,6/620}{
      \draw[figonegray,line width=.7bp] (\xx,74)--(\xx,80);
      \node[figone tick,anchor=base,text=figonegray] at (\xx,58) {\j};
    }
    \node[figone math,anchor=base west,text=figonegray] at (692,73) {$j$};

    \draw[figoneblue,line width=2.5bp,line join=round]
      (92,309)--(180,292.425)--(268,275.85)--(356,250.35)
      --(444,224.85)--(532,185.325)--(620,145.8);
    \foreach \xx/\yy in {180/292.425,356/250.35,532/185.325}{
      \filldraw[fill=white,draw=figoneblue,line width=1.7bp]
        (\xx,\yy) circle (4.2bp);
    }

    \foreach \xx/\yy in {92/286.05,180/278.4,356/232.5,532/158.55,620/125.4}{
      \filldraw[fill=figonedark,draw=figonedark,line width=1bp]
        (\xx,\yy) circle (3.8bp);
    }
    \foreach \xx/\yy in {268/275.85,444/224.85}{
      \filldraw[fill=white,draw=figoneblue,line width=1.7bp]
        (\xx,\yy) circle (7bp);
      \filldraw[fill=figonedark,draw=figonedark,line width=1bp]
        (\xx,\yy) circle (3.4bp);
    }

    \filldraw[fill=figoneblue,draw=white,line width=1bp]
      (87.5,304.5) rectangle (96.5,313.5);
    \filldraw[fill=figoneblue,draw=white,line width=1bp]
      (615.5,141.3) rectangle (624.5,150.3);
    \node[figone label,anchor=base west,text=figoneblue] at (103,319) {$u_0=0$};
    \node[figone label,anchor=base west,text=figoneblue] at (636,148) {$u_6=-6.4$};
    \node[figone label,anchor=base west,text=figonedark] at (636,119)
      {$\log|\lambda_6|=-7.2$};
  \end{scope}
\end{tikzpicture}}
 \caption{}
 \label{fig:positive-auxiliary-widths}
\end{figure}

\begin{proof}
Let $\mathcal C$ be the set of positive sequences satisfying \eqref{c13all:hull-data}.
Since $(\min\{1,\rho_j/\ell\})_{j=0}^{r-2}\in\mathcal C$, define
$\widetilde\rho_j:=\inf_{a\in\mathcal C}a_j$ for $0\leq j\leq r-2$.
Then
\begin{equation}
 \left(\frac{\rho_{r-2}}{\ell}\right)^{j/(r-2)}
 \leq\widetilde\rho_j<\infty
 \qquad(0\leq j\leq r-2),
\end{equation}
and
\begin{equation}
 \widetilde\rho_j^2=\inf_{a\in\mathcal C}a_j^2
 \geq\widetilde\rho_{j-1}\widetilde\rho_{j+1}
 \qquad(1\leq j<r-2).
\end{equation}
Thus \eqref{c13all:hull-data} holds.

If \eqref{signedplankclaim} fails at an interior index $j$, replacing
only the $j$-th entry by the maximum below preserves \eqref{c13all:hull-data}.
Consequently,
\begin{equation}
 \widetilde\rho_j=\inf_{a\in\mathcal C}a_j
 \leq\max\bigl\{|\lambda_j|,
       (\widetilde\rho_{j-1}\widetilde\rho_{j+1})^{1/2}\bigr\}
 <\widetilde\rho_j,
\end{equation}
a contradiction.
\end{proof}

\begin{lemma}\label{lem:positive-auxiliary-scales}
Let $(\widetilde\rho_0,\ldots,\widetilde\rho_{r-2})$ be as in
Lemma~\ref{lem:positive-auxiliary-widths}. There exist
$\widetilde\delta_1,\ldots,\widetilde\delta_{n-1}\in(0,1]$, with
$\widetilde\delta_j=1$ for $r-2<j\leq n-1$, whose associated widths
extend this sequence:
\begin{equation}\label{c13all:hull-tuple}
 \widetilde\rho_j=
 \prod_{i=1}^{\min(j,r-2)}\widetilde\delta_i^{j+1-i}
 \qquad(0\leq j\leq n).
\end{equation}
The associated angular scale satisfies
\begin{equation}\label{c13all:hull-slopes}
 \widetilde\delta:=\prod_{i=1}^{n-1}\widetilde\delta_i
 =\prod_{i=1}^{r-2}\widetilde\delta_i
 \geq\prod_{i=1}^{r-2}\delta_i\geq\delta.
\end{equation}
\end{lemma}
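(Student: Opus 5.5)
The plan is to read the $\widetilde\delta_i$ off from the consecutive ratios of the sequence $\widetilde\rho$. Set
\begin{equation*}
 \kappa_j:=\widetilde\rho_j/\widetilde\rho_{j-1}\qquad(1\leq j\leq r-2).
\end{equation*}
These are positive because $\widetilde\rho$ is a positive sequence. The log-concavity condition $\widetilde\rho_j^2\geq\widetilde\rho_{j-1}\widetilde\rho_{j+1}$ in \eqref{c13all:hull-data} says exactly that $\kappa_{j+1}\leq\kappa_j$. Formula \eqref{c13all:hull-tuple} forces $\widetilde\rho_j/\widetilde\rho_{j-1}=\prod_{i=1}^{\min(j,r-2)}\widetilde\delta_i$. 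We therefore define
\begin{equation*}
 \widetilde\delta_1:=\kappa_1,\qquad
 \widetilde\delta_j:=\kappa_j/\kappa_{j-1}\quad(2\leq j\leq r-2),\qquad
 \widetilde\delta_j:=1\quad(r-2<j\leq n-1).
\end{equation*}
Monotonicity of the ratios gives $\widetilde\delta_j\in(0,1]$ for $j\geq2$.

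For $\widetilde\delta_1\leq1$ we use the minimality of $\widetilde\rho$ from the proof of Lemma~\ref{lem:positive-auxiliary-widths}. Each $\widetilde\rho_j$ was defined as an infimum over $\mathcal C$, and the sequence $(\min\{1,\rho_j/\ell\})_j$ lies in $\mathcal C$. Hence
\begin{equation*}
 \widetilde\rho_j\leq\min\{1,\rho_j/\ell\}\qquad(0\leq j\leq r-2).
\end{equation*}
In particular $\kappa_1=\widetilde\rho_1\leq1$.

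For $0\leq j\leq r-2$, \eqref{c13all:hull-tuple} now follows by telescoping. Indeed,
\begin{equation*}
 \prod_{m=1}^{j}\prod_{i=1}^{m}\widetilde\delta_i=\prod_{m=1}^j\kappa_m=\widetilde\rho_j,
\end{equation*}
and reordering the double product gives $\prod_{i=1}^{j}\widetilde\delta_i^{\,j+1-i}$. For $j>r-2$, the right-hand side of \eqref{c13all:hull-tuple} only involves $\widetilde\delta_1,\dots,\widetilde\delta_{r-2}$. It simply defines the extension $\widetilde\rho_j$: a geometric continuation with constant ratio $\kappa_{r-2}$.

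For the angular scale \eqref{c13all:hull-slopes}, note that $\widetilde\delta_j=1$ for $j>r-2$ and the definitions telescope. This gives
\begin{equation*}
 \widetilde\delta=\prod_{i=1}^{r-2}\widetilde\delta_i=\kappa_{r-2}=\frac{\widetilde\rho_{r-2}}{\widetilde\rho_{r-3}}=\frac{\rho_{r-2}/\ell}{\widetilde\rho_{r-3}}.
\end{equation*}
The minimality bound above gives $\widetilde\rho_{r-3}\leq\rho_{r-3}/\ell$; here $r\geq4$ ensures $r-3\geq1$ and $r-3\leq r-2$. Therefore
\begin{equation*}
 \widetilde\delta\geq\rho_{r-2}/\rho_{r-3}=\prod_{i=1}^{r-2}\delta_i,
\end{equation*}
where the equality is the ratio identity recorded at the start of Section~\ref{sec:positive}, valid since $r-2\leq n-1$. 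Finally, $\prod_{i=1}^{r-2}\delta_i\geq\prod_{i=1}^{n-1}\delta_i=\delta$ because every $\delta_i\leq1$.

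The only delicate point is the comparison $\widetilde\rho_{r-3}\leq\rho_{r-3}/\ell$. It depends on $\widetilde\rho$ being the infimal element of $\mathcal C$, not an arbitrary member. So I would make the argument explicitly invoke the construction $\widetilde\rho_j=\inf_{a\in\mathcal C}a_j$ used in the proof of Lemma~\ref{lem:positive-auxiliary-widths}.
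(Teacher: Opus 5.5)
Your proof is correct and is essentially the paper's argument. You use the same $\widetilde\delta_j$ (ratios of consecutive ratios), the same telescoping, and the same final comparison $\widetilde\delta=\widetilde\rho_{r-2}/\widetilde\rho_{r-3}\geq\rho_{r-2}/\rho_{r-3}$.

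The only difference is how you obtain $\widetilde\rho_j\leq\min\{1,\rho_j/\ell\}$. You read it off directly from $\widetilde\rho_j=\inf_{a\in\mathcal C}a_j$ and $(\min\{1,\rho_j/\ell\})_j\in\mathcal C$. The paper instead derives it from the contact property \eqref{signedplankclaim} at the kinks, then uses log-linear interpolation between consecutive kinks against the log-concave sequence $(\min\{1,\rho_j/\ell\})_j$. That argument uses only the stated conclusions of Lemma~\ref{lem:positive-auxiliary-widths}. So your remark that the bound needs the infimal construction is stronger than necessary.
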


\begin{proof}
Let $a<b$ be consecutive indices where
$\widetilde\rho_j^2>\widetilde\rho_{j-1}\widetilde\rho_{j+1}$,
with $0$ and $r-2$ also included.
By \eqref{zeta-s}--\eqref{signedplankclaim},
$\widetilde\rho_k\leq\min\{1,\rho_k/\ell\}$ for $k\in\{a,b\}$.
Since $\widetilde\rho_j^2=\widetilde\rho_{j-1}\widetilde\rho_{j+1}$ for $a<j<b$,
log-concavity of $(\min\{1,\rho_j/\ell\})_{j=0}^{r-2}$ gives
\begin{equation}
 \widetilde\rho_j
 =\widetilde\rho_a^{(b-j)/(b-a)}\widetilde\rho_b^{(j-a)/(b-a)}
 \leq\min\{1,\rho_j/\ell\}
 \qquad(a\leq j\leq b).
\end{equation}
Set
\begin{equation}
 \widetilde\delta_1:=\widetilde\rho_1,\quad
 \widetilde\delta_j:=\frac{\widetilde\rho_j\widetilde\rho_{j-2}}{\widetilde\rho_{j-1}^2}
 \quad(2\leq j\leq r-2),\quad
 \widetilde\delta_j:=1\quad(r-2<j\leq n-1).
\end{equation}
The preceding bound and log-concavity give
$0<\widetilde\delta_j\leq1$. Telescoping recovers the given widths
in \eqref{c13all:hull-tuple} and gives
$\widetilde\delta=\widetilde\rho_{r-2}/\widetilde\rho_{r-3}$.
The endpoint condition and
$\widetilde\rho_{r-3}\leq\rho_{r-3}/\ell$ imply
\begin{equation}
 \widetilde\delta\geq\frac{\rho_{r-2}}{\rho_{r-3}}
 =\prod_{i=1}^{r-2}\delta_i\geq\delta,
\end{equation}
proving \eqref{c13all:hull-slopes}.
\end{proof}

For each $\lambda$, fix the scales and extended widths from
Lemma~\ref{lem:positive-auxiliary-scales}. They satisfy
\begin{equation}\label{c13all:hull-support}
 |\lambda_j|\leq\widetilde\rho_j\quad(0\leq j\leq n).
\end{equation}
For $0\leq j\leq r-2$, this follows from
Lemma~\ref{lem:positive-auxiliary-widths}.
For $r-2<j\leq n$, combine \eqref{zeta-s},
\eqref{c13all:hull-tuple}--\eqref{c13all:hull-slopes}
with the decreasing ratios $\rho_j/\rho_{j-1}$.
The tail widths satisfy
\begin{equation}
 \widetilde\rho_j=\frac{\rho_{r-2}}{\ell}
       \widetilde\delta^{j-r+2}
 \geq\frac{\rho_j}{\ell}\quad(r-2\leq j\leq n).
\end{equation}
The identity follows from \eqref{c13all:hull-tuple}
and the endpoint condition in \eqref{c13all:hull-data}.
The inequality follows from the decreasing original width ratios and
$\rho_{r-2}/\rho_{r-3}=\prod_{i=1}^{r-2}\delta_i\leq\widetilde\delta$.
In particular, all the extended widths lie in $[R^{-1},1]$.

\subsubsection{Signed frequency decomposition}\label{sec:positive-signed-decomposition}

We split the rescaled annulus into signed planks $p_{s,\alpha}$.
For $1\leq j<r-2$, Lemmas~\ref{lem:positive-auxiliary-widths}
and~\ref{lem:positive-auxiliary-scales} give
$|\lambda_j|=\widetilde\rho_j$ whenever $\widetilde\delta_{j+1}<1$;
set $\nu_{j+1}:=\operatorname{sgn}\lambda_j$ at these indices.
When $\widetilde\delta_1$ is sufficiently small, the bounds
$|\lambda_j|\leq\widetilde\rho_j\leq\widetilde\delta_1$ for $1\leq j\leq n$
and $|\lambda|\sim1$ give
\begin{equation}
 |\lambda_0|\geq|\lambda|-\sum_{j=1}^n|\lambda_j|
 \geq|\lambda|-n\widetilde\delta_1\gtrsim1.
\end{equation}
We then set $\nu_1:=\operatorname{sgn}\lambda_0$.
If $\widetilde\delta_1\sim1$, set $\nu_1:=0$.
Set all remaining signs to zero. No signed lower bound is required
for $r-2\leq j\leq n$.

For each $\lambda$, choose integers $k_j(\lambda)$ such that
\begin{equation}
 2^{k_j(\lambda)}\sim\widetilde\rho_j(\lambda)
 \quad(0\leq j\leq n).
\end{equation}
Let $\Lambda$ be the set of coefficient vectors in \eqref{zeta-s}.
Let $\mathcal A$ index the dyadic sequences and signs
$\alpha=(k,\nu)$ that occur. Then
\begin{equation}
 \Lambda=\bigsqcup_{\alpha\in\mathcal A}\Lambda_\alpha,\qquad
 \Lambda_\alpha:=\{\lambda\in\Lambda:k(\lambda)=k,\ \nu(\lambda)=\nu\}\subset B_\alpha.
\end{equation}
Since $R^{-1}\leq\widetilde\rho_j\leq1$ for $0\leq j\leq n$,
we have $\#\mathcal A\lesssim(1+\log R)^{n+1}$.
For each class, fix a representative coefficient vector and retain
its auxiliary scales; the dyadic values only index the classes.
The containing coordinate boxes are
\begin{equation}
 B_\alpha:=\left\{\lambda:
 \begin{array}{ll}
 |\lambda_j|\lesssim2^{k_j},&0\leq j\leq r-2,\\
 |\lambda_j|\leq\rho_j/\ell,&r-2<j\leq n,\\
 \nu_{j+1}\lambda_j\gtrsim2^{k_j},&0\leq j<r-2,\ \nu_{j+1}\ne0
 \end{array}\right\}.
\end{equation}
The corresponding frequency planks are
\begin{equation}
 p_{s,\alpha}:=\left\{\sum_{j=0}^n\lambda_j\mathbf e_{j+1}(s):
                              \lambda\in B_\alpha\right\}.
\end{equation}
For each fixed $s$, the planks $\{p_{s,\alpha}:\alpha\in\mathcal A\}$
form a bounded-overlap cover of the set of $\zeta$ in \eqref{zeta-s}.
In particular,
\begin{equation}
 \operatorname{supp}\widehat{g_{s,\ell}(\cdot/\ell)}
 \subset\bigcup_{\alpha\in\mathcal A}p_{s,\alpha}.
\end{equation}
Figure~\ref{fig:signed-frequency-decomposition} illustrates this covering for $r=4$.

\begin{figure}[htbp]
 \centering
 \resizebox{0.9\textwidth}{!}{
\begin{tikzpicture}[x=.01cm,y=-.01cm,line cap=round,line join=round,>=latex]
  \definecolor{sdInk}{RGB}{20,34,52}
  \definecolor{sdEdge}{RGB}{69,91,116}
  \definecolor{sdAxis}{RGB}{94,114,132}
  \definecolor{sdTop}{RGB}{207,220,232}
  \definecolor{sdFront}{RGB}{155,165,175}
  \definecolor{sdSide}{RGB}{173,184,195}
  \definecolor{sdBlue}{RGB}{73,136,191}
  \definecolor{sdBlueSide}{RGB}{53,111,159}
  \definecolor{sdBlueTop}{RGB}{83,149,207}
  \definecolor{sdOrange}{RGB}{225,146,48}
  \definecolor{sdOrangeSide}{RGB}{185,119,30}
  \definecolor{sdOrangeTop}{RGB}{245,163,57}
  \definecolor{sdGreen}{RGB}{68,148,132}
  \definecolor{sdGreenSide}{RGB}{56,128,115}
  \definecolor{sdGreenTop}{RGB}{79,162,146}
  \path[use as bounding box] (0,0) rectangle (1625,727);

  \node[anchor=west,inner sep=0pt,text=sdInk,
    font=\rmfamily\fontsize{10}{12}\selectfont]
    at (33,35) {Before decomposition};
  \node[anchor=west,inner sep=0pt,text=sdInk,
    font=\rmfamily\fontsize{10}{12}\selectfont]
    at (809,35) {After decomposition: one box per $\alpha$};

  \path[fill=sdSide]
    (245,337)
    .. controls (409,297) and (577,226) .. (581,121)
    -- (581,360)
    .. controls (555,459) and (415,541) .. (245,577) -- cycle;
  \path[fill=sdFront] (105,300)--(245,337)--(245,577)--(105,540)--cycle;
  \path[fill=sdTop]
    (105,300)
    .. controls (265,276) and (442,210) .. (441,117)
    -- (441,84)--(581,121)
    .. controls (577,226) and (409,297) .. (245,337)--cycle;

  \draw[sdAxis,line width=.52pt,->] (73,581)--(186,610);
  \draw[sdAxis,line width=.52pt,->] (73,581)--(156,527);
  \draw[sdAxis,line width=.52pt,->] (73,581)--(73,482);
  \node[text=sdAxis,inner sep=0pt,font=\fontsize{10}{12}\selectfont,scale=.84]
    at (204,609) {$\lambda_0$};
  \node[text=sdAxis,inner sep=0pt,font=\fontsize{10}{12}\selectfont,scale=.84]
    at (173,520) {$\lambda_1$};
  \node[text=sdAxis,inner sep=0pt,font=\fontsize{10}{12}\selectfont,scale=.84]
    at (57,463) {$\lambda_2$};
  \draw[sdAxis,line width=.36pt] (210,337)--(210,577);
  \draw[sdAxis,line width=.36pt] (200,337)--(220,337);
  \draw[sdAxis,line width=.36pt] (200,577)--(220,577);
  \node[text=sdAxis,inner sep=0pt,font=\fontsize{10}{12}\selectfont,scale=.8]
    at (184,451) {$2a$};
  \node[anchor=west,text=sdInk,inner sep=0pt,
    font=\fontsize{10}{12}\selectfont,scale=.75] at (207,647)
    {$|\lambda|\sim1,\; |\lambda_1|\leq1/2,\; |\lambda_2|\leq a$};

  \draw[sdAxis,line width=.84pt,->] (670,335)--(750,335);

  \path[fill=sdGreen]
    (865,484).. controls (884,486) and (909,495) .. (932,501)
    -- (932,615).. controls (905,610) and (882,605) .. (865,598)--cycle;
  \path[fill=sdGreenSide]
    (932,501).. controls (957,494) and (979,487) .. (994,480)
    --(994,596).. controls (974,604) and (949,611) .. (932,615)--cycle;
  \path[fill=sdGreenTop]
    (865,484).. controls (891,480) and (915,473) ..(937,466)
    --(994,480).. controls (979,487) and (957,494) ..(932,501)--cycle;

  \path[fill=sdOrange]
    (1050,394).. controls (1068,402) and (1090,409) ..(1108,414)
    --(1108,522).. controls (1086,518) and (1067,510) ..(1050,505)--cycle;
  \path[fill=sdOrangeSide]
    (1108,414).. controls (1121,410) and (1129,403) ..(1134,396)
    --(1134,506).. controls (1129,513) and (1120,519) ..(1108,522)--cycle;
  \path[fill=sdOrangeTop]
    (1050,394)--(1074,382)--(1134,396)
    .. controls (1129,403) and (1121,410) ..(1108,414)--cycle;

  \path[fill=sdBlue]
    (1158,330).. controls (1175,336) and (1196,343) ..(1216,348)
    --(1216,459).. controls (1194,455) and (1173,448) ..(1158,441)--cycle;
  \path[fill=sdBlueSide]
    (1216,348)--(1253,323)--(1253,435)--(1216,459)--cycle;
  \path[fill=sdBlueTop]
    (1158,330)--(1195,306)--(1253,323)--(1216,348)--cycle;

  \path[fill=sdOrange]
    (1278,251).. controls (1295,258) and (1315,265) ..(1333,270)
    --(1333,380).. controls (1310,376) and (1291,369) ..(1278,361)--cycle;
  \path[fill=sdOrangeSide]
    (1333,270).. controls (1341,265) and (1347,260) ..(1349,254)
    --(1349,363).. controls (1347,371) and (1341,376) ..(1333,380)--cycle;
  \path[fill=sdOrangeTop]
    (1278,251)--(1290,238)--(1349,254)
    .. controls (1347,260) and (1341,265) ..(1333,270)--cycle;

  \path[fill=sdGreen]
    (1405,166).. controls (1420,172) and (1439,177) ..(1457,182)
    --(1457,294).. controls (1438,289) and (1417,283) ..(1405,278)--cycle;
  \path[fill=sdGreenSide]
    (1457,182).. controls (1468,175) and (1476,163) ..(1479,149)
    --(1479,261).. controls (1477,277) and (1468,287) ..(1457,294)--cycle;
  \path[fill=sdGreenTop]
    (1405,166).. controls (1409,154) and (1412,142) ..(1413,131)
    --(1479,149).. controls (1476,163) and (1468,175) ..(1457,182)--cycle;

  \newcommand{\SDwirebox}[5]{%
    \coordinate (sdA) at (#1,#2);
    \coordinate (sdB) at ($(sdA)+(#3,#4)$);
    \coordinate (sdC) at ($(sdB)+(123,32)$);
    \coordinate (sdD) at ($(sdA)+(123,32)$);
    \coordinate (sdE) at ($(sdA)+(0,#5)$);
    \coordinate (sdF) at ($(sdB)+(0,#5)$);
    \coordinate (sdG) at ($(sdC)+(0,#5)$);
    \coordinate (sdH) at ($(sdD)+(0,#5)$);
    \begin{scope}
      \clip (sdA)--(sdB)--(sdC)--(sdD)--cycle;
      \foreach \i in {-20,...,20}{%
        \draw[sdEdge,line width=.14pt,opacity=.72]
          ($(sdB)+({12*\i},0)$)--($(sdD)+({12*\i},0)$);
      }
    \end{scope}
    \begin{scope}
      \clip (sdA)--(sdD)--(sdH)--(sdE)--cycle;
      \foreach \i in {-20,...,20}{%
        \draw[sdEdge,line width=.14pt,opacity=.72]
          ($(sdA)+({12*\i},0)$)--($(sdH)+({12*\i},0)$);
      }
    \end{scope}
    \begin{scope}
      \clip (sdD)--(sdC)--(sdG)--(sdH)--cycle;
      \foreach \i in {-20,...,20}{%
        \draw[sdEdge,line width=.14pt,opacity=.72]
          ($(sdD)+(0,{12*\i})$)--($(sdG)+(0,{12*\i})$);
      }
    \end{scope}
    \draw[sdEdge,line width=.36pt]
      (sdA)--(sdB)--(sdC)--(sdD)--cycle
      (sdA)--(sdE)--(sdH)--(sdG)--(sdC)
      (sdD)--(sdH);
    \draw[sdEdge,line width=.18pt,opacity=.7]
      (sdB)--(sdF)--(sdE) (sdF)--(sdG);
  }
  \SDwirebox{766}{525}{141}{-90}{125}
  \SDwirebox{960}{400}{70}{-44}{125}
  \SDwirebox{1101}{310}{40}{-26}{125}
  \SDwirebox{1212}{239}{70}{-45}{125}
  \SDwirebox{1334}{160}{141}{-90}{125}

  \node[anchor=east,inner sep=0pt,text=sdInk,
    font=\fontsize{10}{12}\selectfont,scale=.84] at (754,648) {$\nu_2=-1$};
  \node[anchor=west,inner sep=0pt,text=sdInk,
    font=\fontsize{10}{12}\selectfont,scale=.84] at (1176,506) {$\nu_2=0$};
  \node[anchor=west,inner sep=0pt,text=sdInk,
    font=\fontsize{10}{12}\selectfont,scale=.84] at (1458,349) {$\nu_2=+1$};

  \path[fill=sdBlue] (40,689) rectangle (66,715);
  \node[anchor=west,text=sdInk,inner sep=0pt,
    font=\fontsize{10}{12}\selectfont,scale=.85] at (79,701) {$|\lambda_1|\leq1/8$};
  \path[fill=sdOrange] (530,689) rectangle (556,715);
  \node[anchor=west,text=sdInk,inner sep=0pt,
    font=\fontsize{10}{12}\selectfont,scale=.85] at (568,701) {$1/8<|\lambda_1|\leq1/4$};
  \path[fill=sdGreen] (1075,689) rectangle (1101,715);
  \node[anchor=west,text=sdInk,inner sep=0pt,
    font=\fontsize{10}{12}\selectfont,scale=.85] at (1114,701) {$1/4<|\lambda_1|\leq1/2$};
\end{tikzpicture}}
 \caption{}
 \label{fig:signed-frequency-decomposition}
\end{figure}


We now use these planks to construct a smooth frequency partition.
For each $s$, choose smooth cutoffs $\chi_{\alpha,s}$ supported in
$p_{s,\alpha}$ and uniformly adapted to the containing coordinate boxes,
in the sense of Definition~\ref{exp:adapted-bump}, with
\begin{equation}
 \sum_{\alpha\in\mathcal A}\chi_{\alpha,s}(\zeta)=1
 \quad\text{on }\operatorname{supp}
       \widehat{g_{s,\ell}(\cdot/\ell)}.
\end{equation}
Define the pieces in the rescaled variable by
\begin{equation}
 \begin{aligned}
 \widehat g_{s,\ell,\alpha}(\zeta)
 &:=\chi_{\alpha,s}(\zeta)\,
       \ell^{n+1}\widehat g_{s,\ell}(\ell\zeta)=\chi_{\alpha,s}(\zeta)\eta_\ell(\ell\zeta)\,
       \ell^{n+1}\widehat g_s(\ell\zeta).
 \end{aligned}
\end{equation}
Since $\ell^{n+1}\widehat g_{s,\ell}(\ell\zeta)$ is the Fourier
transform of $g_{s,\ell}(\cdot/\ell)$, we obtain
\begin{equation}
 \begin{aligned}
 g_{s,\ell}(y/\ell)&=\sum_{\alpha\in\mathcal A}g_{s,\ell,\alpha}(y).
 \end{aligned}
\end{equation}
For each fixed dyadic scale $c^{-1}\rho_{r-2}<\ell\leq1$,
H\"older's inequality therefore gives
\begin{equation}\label{c13all:high-low-reduction}
 \begin{aligned}
 &\int\Big|\sum_{s\in\mathbb I_\delta}g_{s,\ell}(y/\ell)\Big|^r
       W_\ell(y)\,\mathrm{d}y\lesssim(1+\log R)^C\sum_{\alpha\in\mathcal A}
 \int\Big|\sum_{s\in\mathbb I_\delta}g_{s,\ell,\alpha}(y)\Big|^r
       W_\ell(y)\,\mathrm{d}y.
 \end{aligned}
\end{equation}

It therefore suffices to prove, uniformly in $\ell$ and $\alpha$,
\begin{equation}
 \begin{aligned}
 &\int\Big|\sum_{s\in\mathbb I_\delta}g_{s,\ell,\alpha}(y)\Big|^r
       W_\ell(y)\,\mathrm{d}y\lesssim R^\varepsilon\delta^{-(r-2)}\frac{\rho_{r-2}}{\ell}
 \int\Big|\sum_{s\in\mathbb I_\delta}g_s(y/\ell)\Big|^2\,\mathrm{d}y.
 \end{aligned}
\end{equation}
The logarithmic loss in \eqref{c13all:high-low-reduction} is absorbed
into $R^\varepsilon$.

\subsubsection{Grouping the angular pieces}\label{sec:positive-angular-grouping}

Fix a class $\alpha\in\mathcal A$ and use its representative auxiliary
scales $\widetilde\rho_j$, $\widetilde\delta_j$ and $\widetilde\delta$.
Let $\widetilde{\mathcal J}_{r-2}$ partition $[-1,1]$ into intervals
of length $c\widetilde\delta$, where $c>0$ is sufficiently small, and let $s_\tau$
be the center of $\tau$. Define
\begin{equation}
 g_\tau:=\sum_{s\in\mathbb I_\delta\cap\tau}g_{s,\ell,\alpha}.
\end{equation}
We prove the common signed-plank support and
\begin{equation}\label{c13all:coarse-amplitude}
 \|g_\tau\|_\infty\lesssim \frac{\widetilde\delta}{\delta}
 \qquad(\tau\in\widetilde{\mathcal J}_{r-2}).
\end{equation}
At every angular scale below, a fixed finite partition of the interval
centers gives the separation required by the angular estimates.

For $\zeta\in\operatorname{supp}\widehat g_{s,\ell,\alpha}$, write
$\zeta=\sum_{j=0}^n\lambda_j\mathbf e_{j+1}(s)$. The signed decomposition gives
\begin{equation}
 \begin{gathered}
 |\lambda_j|\lesssim\widetilde\rho_j\quad(0\leq j\leq n),\\
 \nu_{j+1}\lambda_j\gtrsim\widetilde\rho_j
 \quad(0\leq j<r-2,\ \nu_{j+1}\ne0).
 \end{gathered}
\end{equation}

\begin{lemma}[Change of frame]\label{prelim:change-of-frame}
Let $\mathbf b_j:=\mathbf e_{j+1}$, or $\mathbf b_j:=v^{(j)}$,
where $v$ has uniform $C^{n+1}$ bounds and
$[v,\ldots,v^{(n)}]^{-1}$ is uniformly bounded.
For $s,t\in[-1,1]$ with $d:=|s-t|\leq1$, write
\begin{equation}
 \xi=\sum_{i=0}^n\lambda_i\mathbf b_i(s)
    =\sum_{j=0}^n\mu_j\mathbf b_j(t).
\end{equation}
Then, for $0\leq j\leq n$,
\begin{equation}
 |\mu_j-\lambda_j|
 \lesssim\sum_{i=0}^{j-1}d^{j-i}|\lambda_i|
       +d\sum_{i=j}^n|\lambda_i|.
\end{equation}
\end{lemma}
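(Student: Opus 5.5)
We expand the frame at $s$ in the frame at $t$ by Taylor's theorem and read off the coefficients. By linearity it suffices to treat a single basis vector: we bound the coefficients of $\mathbf b_i(s)$ in the frame $(\mathbf b_j(t))_{j=0}^n$ and then sum over $i$ with weights $|\lambda_i|$. Write
\begin{equation}
 \mathbf b_i(s)=\sum_{j=0}^n m_{ij}\,\mathbf b_j(t).
\end{equation}
Then $\mu_j=\sum_i\lambda_i m_{ij}$, so $\mu_j-\lambda_j=\sum_i\lambda_i(m_{ij}-\delta_{ij})$. The lemma follows once we show
\begin{equation}
 |m_{ij}-\delta_{ij}|\lesssim d^{j-i}\quad(i<j),\qquad
 |m_{ij}-\delta_{ij}|\lesssim d\quad(i\geq j).
\end{equation}
The second bound is the easy part. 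Both frames are $C^1$ in the parameter with uniformly bounded inverse matrices, so the change-of-basis matrix is Lipschitz in $s$ and equals the identity at $s=t$. Hence $|m_{ij}-\delta_{ij}|\lesssim d$ for all $i,j$, which covers every $i\geq j$.

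For $i<j$ we use Taylor expansion to order $j-i$. For the case $\mathbf b_j=v^{(j)}$, we have
\begin{equation}
 v^{(i)}(s)=\sum_{k=0}^{n-i}\frac{(s-t)^k}{k!}\,v^{(i+k)}(t)+O(d^{\,n+1-i}),
\end{equation}
where the remainder uses the uniform $C^{n+1}$ bound. The coefficient on $v^{(j)}(t)$ from the polynomial part is $(s-t)^{j-i}/(j-i)!=O(d^{j-i})$. The remainder is a vector of size $O(d^{n+1-i})$, and by the uniform invertibility of $[v,\ldots,v^{(n)}](t)$ its coordinates are all $O(d^{n+1-i})\leq O(d^{j-i})$. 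This gives $m_{ij}=O(d^{j-i})$ for $i<j$.

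For the orthonormal frame $\mathbf b_j=\mathbf e_{j+1}$, we reduce to the previous case. Choose $v$ so that $\mathbf e_{j+1}$, $0\le j\le n$, is the Gram--Schmidt output of $v,v',\ldots,v^{(n)}$ in this order; for the paper's frame this means taking $v=\Gamma$ and reversing the indexing, so that $\mathbf e_{n+1}\propto\Gamma$. The frame is then related to $(v^{(k)})$ by a triangular matrix $L(t)$ with smooth entries and a uniformly bounded inverse. A vector in $\mathrm{span}\{v^{(k)}(t):k\geq j\}$ has no component along $\mathbf e_{i+1}(t)$ for $i<j$ in the appropriate ordering. Thus the flag structure, namely the property that the coefficient on index $j$ vanishes to order $j-i$, is preserved under this triangular change of basis. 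Concretely, we write $m^{\mathbf e}=L(s)^{-1}\,m^{v}\,L(t)$. We also use $L(s)=L(t)+O(d)$. The main obstacle is getting the orientation of this triangularity right, so that composing with $L$ does not mix a low index into a high one without the matching power of $d$. We would check this carefully by expanding $L(s)^{-1}m^vL(t)$ entrywise and verifying that each entry $(i,j)$ with $i<j$ picks up a factor $d^{j-i}$; an $O(d)$ error in $L(s)$ only contributes additional factors of $d$ and is harmless. If this verification runs into trouble, the fallback is a direct argument. Differentiate the Frenet-type system $\mathbf e'=\mathbf e\,\mathcal K$, where $\mathcal K$ is tridiagonal with bounded entries. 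Iterating the resulting ODE $k$ times shows that reaching an index $j$ steps away from $i$ requires $j-i$ applications of the band matrix, which gives the factor $d^{j-i}$ directly.
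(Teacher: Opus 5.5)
Your derivative-frame argument is correct and matches the paper's. That is, you Taylor expand $v^{(i)}(s)$ about $t$, control the remainder's coordinates by the uniform invertibility of $[v,\ldots,v^{(n)}](t)$, and handle $j\le i$ by the Lipschitz bound. Your fallback for the orthonormal frame is also exactly the paper's argument: iterate the tridiagonal Frenet system to get $\mathbf b_i^{(k)}(t)\in\mathrm{span}\{\mathbf b_{i-k}(t),\ldots,\mathbf b_{i+k}(t)\}$ with bounded coefficients, then Taylor expand to order $j-i$. It even gives $O(d^{|j-i|})$. So your proposal proves the lemma once you commit to the fallback.

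Your primary triangular reduction, however, fails as set up; the orientation problem you anticipated is real. With $v=\Gamma$, Gram--Schmidt gives $\mathbf b_i=\mathbf e_{i+1}\in\mathrm{span}\{\Gamma,\ldots,\Gamma^{(n-i)}\}$ and $\Gamma^{(b)}\in\mathrm{span}\{\mathbf b_{n-b},\ldots,\mathbf b_n\}$. Hence the coefficient of $\mathbf b_j(t)$ in $\mathbf b_i(s)$ is a bounded combination of the coefficients $m^v_{ab}$ of $\Gamma^{(b)}(t)$ in $\Gamma^{(a)}(s)$, over $a\le n-i$ and $b\ge n-j$. Taylor makes $m^v_{ab}=O(d^{b-a})$ only for $b>a$.
\begin{itemize}
\item For $j<i$, every term has $b-a\ge i-j$, so you get $O(d^{i-j})$. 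That is the direction the lemma does not need.
\item For $j>i$, the sum contains the diagonal terms $a=b$, each of size $O(1)$. They cancel only up to the $O(d)$ discrepancy between the Gram--Schmidt matrices at $s$ and $t$. The terms with $b=a+1$ are also only $O(d)$. So no factor $d^{j-i}$ emerges, and the $O(d)$ error in $L(s)$ is not harmless here.
\end{itemize}

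There are two quick repairs.
\begin{itemize}
\item[(i)] Keep $v=\Gamma$ and use orthonormality. The quantity $\mathbf b_i(s)\cdot\mathbf b_j(t)$ is also the coefficient of $\mathbf b_i(s)$ in $\mathbf b_j(t)$. The computation above with $s$ and $t$ interchanged bounds this by $O(d^{j-i})$ for $i<j$. This gives a Frenet-free proof.
\item[(ii)] Take $v=\mathbf e_1$, as the paper does in its appendix. Then $\mathrm{span}\{\mathbf b_0,\ldots,\mathbf b_j\}=\mathrm{span}\{v,\ldots,v^{(j)}\}$, every contributing term has $b\ge j>i\ge a$, and no cancellation is needed. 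However, checking this flag and the invertibility of $[v,\ldots,v^{(n)}]$ for $v=\mathbf e_1$ already uses the Frenet equations with nonvanishing curvatures.
\end{itemize}
Note also that in this orientation your sentence about $\mathrm{span}\{v^{(k)}(t):k\ge j\}$ is false, since $v^{(j)}$ generally has components along all of $\mathbf b_0,\ldots,\mathbf b_j$. The property the composition actually uses is the lower-flag identity just stated.
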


\begin{proof}
Taylor's formula for the derivative frame, and the Frenet equations
for the orthonormal frame, show that the coefficient of
$\mathbf b_j(t)$ in $\mathbf b_i(s)$ is $O(d^{j-i})$ for $j>i$
and $\delta_{ij}+O(d)$ for $j\leq i$.
Summing against $\lambda_i$ gives the claim.
\end{proof}

For $s\in\tau$ and $\zeta\in\operatorname{supp}\widehat g_{s,\ell,\alpha}$,
 define the two coefficient vectors by
\begin{equation}
 \zeta=\sum_{i=0}^n\lambda_i\mathbf e_{i+1}(s)
           =\sum_{i=0}^n\lambda_i'\mathbf e_{i+1}(s_\tau).
\end{equation}
The widths are nonincreasing and satisfy
$\widetilde\rho_i \widetilde\delta^{j-i}\leq\widetilde\rho_j$ for
$0\leq i\leq j\leq n$.
Lemma~\ref{prelim:change-of-frame}, with $|s-s_\tau|\leq c\widetilde\delta$, gives
\begin{equation}
 |\lambda_j'-\lambda_j|\lesssim c\widetilde\rho_j
 \qquad(0\leq j\leq n).
\end{equation}
Taking $c>0$ sufficiently small preserves the signed lower bounds:
\begin{equation}
 \begin{gathered}
 |\lambda_j'|\lesssim\widetilde\rho_j\quad(0\leq j\leq n),\\
 \nu_{j+1}\lambda_j'\gtrsim\widetilde\rho_j
 \quad(0\leq j<r-2,\ \nu_{j+1}\ne0).
 \end{gathered}
\end{equation}
Consequently,
\begin{equation}
 \operatorname{supp}\widehat g_\tau
 \subset
 \left\{\sum_{j=0}^n\lambda_j'\mathbf e_{j+1}(s_\tau):
 \begin{array}{ll}
 |\lambda_j'|\lesssim\widetilde\rho_j,&0\leq j\leq n,\\
 \nu_{j+1}\lambda_j'\gtrsim\widetilde\rho_j,&0\leq j<r-2,\ \nu_{j+1}\ne0
 \end{array}\right\}.
\end{equation} In the directions
$\mathbf e_1(s_\tau),\ldots,\mathbf e_{n+1}(s_\tau)$, this frequency
plank has dimensions, up to fixed factors,
\begin{equation}
 1\times\widetilde\rho_1\times\cdots\times\widetilde\rho_{r-2}
 \times(\widetilde\rho_{r-2} \widetilde\delta)\times\cdots\times
 (\widetilde\rho_{r-2} \widetilde\delta^{n-r+2}).
\end{equation}
The adaptation of the cutoffs gives
\begin{equation}
 \big\|\mathcal F^{-1}_{\zeta}
 [\chi_{\alpha,s}(\zeta)\eta_\ell(\ell\zeta)]\big\|_1\lesssim1.
\end{equation}
Consequently, Young's inequality and $\|g_s\|_\infty\leq1$ imply
\begin{equation}
 \|g_{s,\ell,\alpha}\|_\infty\lesssim1,\qquad
 \|g_\tau\|_\infty
 \lesssim\#(\mathbb I_\delta\cap\tau)\lesssim \frac{\widetilde\delta}{\delta}.
\end{equation}
This proves \eqref{c13all:coarse-amplitude}.
\medskip

It remains to prove
\begin{equation}
 \int\Big|\sum_{\tau\in\widetilde{\mathcal J}_{r-2}}g_\tau(y)\Big|^r
       W_\ell(y)\,\mathrm{d}y
 \lesssim R^\varepsilon\bigl(\frac{\widetilde\delta}{\delta}\bigr)^{r-2}
 \prod_{j=1}^{r-2}\widetilde\delta_j^{-(j-1)}
 \int\Big|\sum_{s\in\mathbb I_\delta}g_s(y/\ell)\Big|^2\,\mathrm{d}y.
\end{equation}
We obtain this from two estimates. In Section~\ref{sec:positive-angular},
we prove the weighted estimate~\eqref{c13all:annular-square-estimate},
\begin{equation}
 \int\Big|\sum_{\tau\in\widetilde{\mathcal J}_{r-2}}g_\tau\Big|^rW_\ell
 \lesssim R^\varepsilon\bigl(\frac{\widetilde\delta}{\delta}\bigr)^{r-4}
 \prod_{j=1}^{r-2}\widetilde\delta_j^{-(j-1)}
 \int\Big(\sum_{\tau\in\widetilde{\mathcal J}_{r-2}}|g_\tau|^2\Big)^2.
\end{equation}
In Section~\ref{sec:positive-kernels}, we prove the square-function
bound~\eqref{c13all:coarse-amplitude-charge},
\begin{equation}
 \int\Big(\sum_{\tau\in\widetilde{\mathcal J}_{r-2}}|g_\tau|^2\Big)^2
 \lesssim R^\varepsilon\bigl(\frac{\widetilde\delta}{\delta}\bigr)^2
 \int\Big|\sum_{s\in\mathbb I_\delta}g_s(y/\ell)\Big|^2\,\mathrm{d}y.
\end{equation}
Combining these estimates with $\varepsilon/2$ in place of $\varepsilon$
gives the preceding bound. By \eqref{c13all:hull-data},
\eqref{c13all:hull-tuple}, and \eqref{c13all:hull-slopes},
\begin{equation}
 \bigl(\frac{\widetilde\delta}{\delta}\bigr)^{r-2}
 \prod_{j=1}^{r-2}\widetilde\delta_j^{-(j-1)}
 =\delta^{-(r-2)}\prod_{j=1}^{r-2}\widetilde\delta_j^{r-1-j}
 =\delta^{-(r-2)}\widetilde\rho_{r-2}
 =\delta^{-(r-2)}\frac{\rho_{r-2}}{\ell}.
\end{equation}
This gives the required estimate. Summing over $\ell$ and $\alpha$
completes the proof of \eqref{c13all:positive-moment-estimate}.

\subsection{The weighted estimate}\label{sec:positive-angular}

\begin{lemma}\label{lem:positive-angular-square}
Let $4\leq r\leq n$ be an integer. If $r\geq5$, assume that
Proposition~\ref{c13all:positive-moments}
holds at exponent $r-1$, uniformly over admissible scale tuples.
Then, for every $\varepsilon>0$,
\begin{equation}\label{c13all:annular-square-estimate}
 \begin{aligned}
 \int\Big|\sum_{\tau\in\widetilde{\mathcal J}_{r-2}}g_\tau\Big|^rW_\ell
 &\lesssim R^\varepsilon\bigl(\frac{\widetilde\delta}{\delta}\bigr)^{r-4}
 \prod_{j=1}^{r-2}\widetilde\delta_j^{-(j-1)}
 \int\Big(\sum_{\tau\in\widetilde{\mathcal J}_{r-2}}|g_\tau|^2\Big)^2.
 \end{aligned}
\end{equation}
\end{lemma}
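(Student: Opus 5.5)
The plan is to reduce the weighted $L^r$ integral over the fine angular scale $\widetilde\delta$ to a weighted estimate at a coarser angular scale, where Proposition~\ref{c13all:prefix-transfer} applies. From there I would descend to the scale $\widetilde\delta$ with the unweighted multiscale estimates of Proposition~\ref{prop:gmo-unweighted}, and finally convert the $\ell^{r/2}$ sum into the $L^4$ square function using the amplitude bound \eqref{c13all:coarse-amplitude}.

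Throughout, I regard $(\widetilde{\vec\delta},\vec\nu)$ from Section~\ref{sec:positive-angular-grouping} as an admissible pair at the scale $\widetilde R:=\widetilde\rho_n^{-1}\le R$. The $g_\tau$, $\tau\in\widetilde{\mathcal J}_{r-2}$, are then Fourier supported in the corresponding signed planks $p_{s_\tau}$, and $W_\ell$ is an $n$-dimensional weight.

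\emph{Case $r=4$.} I would use $W_\ell\le1$ and then \eqref{c13all:unweighted-four} for the tuple $\widetilde{\vec\delta}$. This gives the bound $\widetilde R^\varepsilon\int(\sum_\tau|g_\tau|^2)^2$. The claimed factor is $\prod_{j\le2}\widetilde\delta_j^{-(j-1)}=\widetilde\delta_2^{-1}\ge1$, so this case is immediate.

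\emph{Case $r\ge5$.} Set $J':=\lfloor (r-1)/2\rfloor-1$, and group $\tau$ into intervals $I\in\mathcal J_{J'}$ of the truncated tuple $(\widetilde\delta_1,\dots,\widetilde\delta_{J'},1,\dots,1)$. Write $G_I:=\sum_{\tau\subset I}g_\tau$. By \cite[Lemmas~3.10, 3.13]{GMO2026}, $G_I$ has Fourier support in an admissible enlargement of the truncated signed plank at the center of $I$. That truncated pair has $\delta_j=1$ for $j\ge\lfloor(r-1)/2\rfloor$.

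The hypothesis gives Proposition~\ref{c13all:positive-moments} at exponent $r-1$. So Proposition~\ref{c13all:prefix-transfer} applies at exponent $r-1$ with $p=r\in(r-1,r+1]$. I would apply it on each ball $B_R$ of a bounded-overlap cover and sum, using $\sum_{B_R}W_{B_R}\lesssim1$. This gives
\begin{equation*}
 \int\Big|\sum_{I}G_I\Big|^rW_\ell\lesssim R^\varepsilon\prod_{i=1}^{J'}\widetilde\delta_i^{-(i-1)}\int\Big(\sum_I|G_I|^{r/2}\Big)^2 .
\end{equation*}
Next I would iterate \eqref{c13all:unweighted-stage} with $q=r$ for $j=J'+1,\dots,r-2$. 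This refines the grouping down to $\widetilde{\mathcal J}_{r-2}$ at a cost $\prod_{j=J'+1}^{r-2}\widetilde\delta_j^{-(r/2-2)}$. Since $j\ge J'+1\ge (r-1)/2-1$, one has $r/2-2\le j-1$ up to a half-integer check. Because $\widetilde\delta_j\le1$, this cost is then at most $\prod_{j=J'+1}^{r-2}\widetilde\delta_j^{-(j-1)}$. Finally,
\begin{equation*}
 \Big(\sum_\tau|g_\tau|^{r/2}\Big)^2\le\sup_\tau\|g_\tau\|_\infty^{r-4}\Big(\sum_\tau|g_\tau|^2\Big)^2\lesssim\Big(\frac{\widetilde\delta}{\delta}\Big)^{r-4}\Big(\sum_\tau|g_\tau|^2\Big)^2
\end{equation*}
by \eqref{c13all:coarse-amplitude}. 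Multiplying the factors gives \eqref{c13all:annular-square-estimate}.

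\emph{Expected obstacles.} The main obstacle is the admissible range of the stage estimate: \eqref{c13all:unweighted-stage} requires $r\le(n+1-j)(n+2-j)-2$ for $j\le r-2$. This holds comfortably unless $r$ is close to $n$ and $n$ is large. If it fails at the last few stages, I would first try two things. One is to run those stages at exponent $4$ (or at the largest allowed $q$) and interpolate against the $L^\infty$ amplitude bound. The other is to move the prefix-transfer step to a finer grouping, using the allowance $p\le r+2$ and the range $q\le(n+1-J)(n+2-J)$ inside its proof. A secondary point is the half-integer bookkeeping in comparing $r/2-2$ with $j-1$ at $j=J'+1$ when $r$ is even. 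If the exponents do not match there, I would shift $J'$ by one and absorb the resulting factor into the prefix-transfer product.
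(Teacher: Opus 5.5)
Your architecture matches the paper's. You use the same $r=4$ argument and the same truncation $J=\lfloor (r-1)/2\rfloor-1$. You apply Proposition~\ref{c13all:prefix-transfer} at moment exponent $r-1$ with $p=r$ to the grouped sums, then descend to $\widetilde{\mathcal J}_{r-2}$ with the stage estimates, and finish with the amplitude bound. Your half-integer worry is moot, since $r/2-2\leq J$ for both parities of $r$.

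\textbf{The gap is in the descent.} Iterating \eqref{c13all:unweighted-stage} with $q=r$ requires $r\leq(n+1-j)(n+2-j)-2$ for every $J<j\leq r-2$. At $j=r-2$ this reads $r\leq(n+3-r)(n+4-r)-2$, which fails, for instance, when $r=n\geq 11$. The lemma is needed for all $4\leq r\leq n$ in every dimension, so as written your argument covers every case only when $n\leq 10$. The repair is listed as options to try, not carried out.

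\textbf{The repair that works.} It is the one the paper uses, and its substance is the bookkeeping you left undone.
\begin{itemize}
\item Take capped exponents $p_J:=r$ and $p_j:=\min\{r,(n+1-j)(n+2-j)-2\}$ for $J<j\leq r-2$.
\item Before running stage $j$ at exponent $p_j$, lower the exponent at level $j-1$ from $p_{j-1}$ to $p_j$. This uses the amplitude bound for the \emph{grouped} functions, $\|g_I\|_\infty\lesssim\frac{\widetilde\delta}{\delta}\prod_{k=j}^{r-2}\widetilde\delta_k^{-1}$ for $I\in\widetilde{\mathcal J}_{j-1}$. This is weaker than \eqref{c13all:coarse-amplitude}, which only covers the $g_\tau$.
\item The extra factors $\widetilde\delta_k^{-1}$ must fit the budget. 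Beyond $(\widetilde\delta/\delta)^{r-4}$, the total power of $\widetilde\delta_k^{-1}$ is $p_k/2-2+\sum_{j=J+1}^{k}(p_{j-1}-p_j)=r-2-p_k/2$.
\item This is at most $k-1$. Either $p_k=r$, and then $r/2-2\leq J\leq k-1$; or $p_k/2\geq n-k-1\geq r-k-1$.
\end{itemize}

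\textbf{The choice of exponent matters.} Dropping straight to $4$ at the first failing stage can exceed the budget. For $n=r=40$ the first failing stage is $j=36$. Dropping from $40$ to $4$ at level $35$ costs $\widetilde\delta_{36}^{-36}$ beyond $(\widetilde\delta/\delta)^{36}$, while the allowance is only $\widetilde\delta_{36}^{-35}$.

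\textbf{Your other suggestions.} Moving the prefix-transfer step to a finer grouping is not available as stated. Proposition~\ref{c13all:prefix-transfer} at moment exponent $r-1$ fixes the truncation at $\lfloor(r-1)/2\rfloor-1$. Re-proving it with a finer truncation does not help either, because its refined-decoupling step (Proposition~\ref{c13all:rp}, used there at $q=2r$) has the same kind of range restriction.

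\textbf{A minor point.} When $\nu_1=0$ but $\widetilde\delta_1<1$, the pair $(\widetilde{\vec\delta},\vec\nu)$ is not admissible as you assert. Replace $\widetilde\delta_1$ by $1$, as the paper does; this gives comparable widths and no larger physical scale.
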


Let us give a proof of Lemma~\ref{lem:positive-angular-square}.
When $\nu_1=0$, we apply the signed-plank estimates with
$(1,\widetilde\delta_2,\ldots,\widetilde\delta_{n-1})$ and its truncations.
The corresponding widths and angular scales are comparable to the original ones,
and the physical scales are no larger. All auxiliary scales in the formulas
below retain their original values.
For $r=4$, $W_\ell\lesssim1$ and \eqref{c13all:unweighted-four},
applied at a physical scale at most $\widetilde\rho_n^{-1}\leq\ell R\leq R$, give
\begin{equation}
 \int\Big|\sum_{\tau\in\widetilde{\mathcal J}_{r-2}}g_\tau\Big|^4W_\ell
 \lesssim R^\varepsilon
 \int\Big(\sum_{\tau\in\widetilde{\mathcal J}_{r-2}}|g_\tau|^2\Big)^2.
\end{equation}
This implies \eqref{c13all:annular-square-estimate}, since
$\prod_{j=1}^{r-2}\widetilde\delta_j^{-(j-1)}\geq1$.

For the remainder of the proof, assume $r\geq5$ and set
$J:=\lfloor(r-1)/2\rfloor-1$, so that $1\leq J\leq r-4$.
Extend $\widetilde{\mathcal J}_{r-2}$ to nested interval partitions
$\widetilde{\mathcal J}_j$ such that
\begin{equation}
 |I|\sim  \prod_{k=1}^j\widetilde\delta_k
 \quad(I\in\widetilde{\mathcal J}_j),\qquad 0\leq j\leq r-2.
\end{equation}
Put
\begin{equation}
 g_I:=\sum_{\substack{\tau\in\widetilde{\mathcal J}_{r-2}\\\tau\subset I}}g_\tau,
 \qquad I\in\widetilde{\mathcal J}_j,\quad 0\leq j\leq r-2.
\end{equation}

We record the Fourier support and physical scales used in both parts below.
The nestedness property gives  $g_I=\sum_{\substack{I'\subset I:
                        I'\in\widetilde{\mathcal J}_j}}g_{I'}$ for any $I\in\widetilde{\mathcal J}_{j-1},\ 1\leq j\leq r-2$.
For $0\leq j\leq r-2$, the tuple truncated at $j$ has side lengths 
\begin{equation}
 \begin{gathered}
 \widetilde\rho_k^{(j)}
 :=\prod_{b=1}^{\min(j,k)}\widetilde\delta_b^{k+1-b}, \qquad 
 \widetilde\rho_k^{(j)}
 \begin{cases}
 =\widetilde\rho_k,&0\leq k\leq j,\\
 \geq\widetilde\rho_k,&j<k\leq n.
 \end{cases}
 \end{gathered}
\end{equation}
for $0 \leq k \leq n$.
Consequently, for $1 \leq k \leq n$,
\begin{equation}
 \begin{gathered}
 \frac{\widetilde\rho_k^{(j)}}{\widetilde\rho_{k-1}^{(j)}}
 =\prod_{b=1}^{\min(j,k)}\widetilde\delta_b
 \geq\prod_{b=1}^j\widetilde\delta_b,
 \end{gathered}
 \end{equation}
 and for $0 \leq i \leq k \leq n$,
 \begin{equation}\begin{gathered}
 \widetilde\rho_i^{(j)}
 \left(\prod_{b=1}^j\widetilde\delta_b\right)^{k-i}
 \leq\widetilde\rho_k^{(j)}.
 \end{gathered}
\end{equation}
Fix $0\leq j\leq r-2$ and $I\in\widetilde{\mathcal J}_j$, and let
$s_I$ be its center. For $s\in I$ and
$\zeta\in\operatorname{supp}\widehat g_{s,\ell,\alpha}$, apply
Lemma~\ref{prelim:change-of-frame} with widths
$\widetilde\rho_k^{(j)}$ and
$|s-s_I|\leq c\prod_{b=1}^j\widetilde\delta_b$,
using the same small constant $c$. This gives
\begin{equation}\label{c13all:truncated-frame-change}
 |\lambda_k'-\lambda_k|\lesssim c\widetilde\rho_k^{(j)}
 \qquad(0\leq k\leq n),
\end{equation}
where $\lambda_k$ and $\lambda_k'$ are the coefficients of $\zeta$
in the frames at $s$ and $s_I$, respectively. At each signed coordinate of the truncated tuple,
\begin{equation}
 1\leq k\leq j,\quad\nu_k\ne0
 \quad\Longrightarrow\quad
 \widetilde\rho_{k-1}^{(j)}=\widetilde\rho_{k-1}.
\end{equation}
Taking $c$ sufficiently small in \eqref{c13all:truncated-frame-change}
therefore preserves the signed lower bounds and gives
\begin{equation}\label{c13all:grouped-truncated-support}
 \operatorname{supp}\widehat g_I\subset
 \left\{\zeta:
 \begin{array}{ll}
 |\zeta\cdot\mathbf e_{k+1}(s_I)|\leq C\widetilde\rho_k^{(j)},
     &0\leq k\leq n,\\
 \nu_{k+1}\zeta\cdot\mathbf e_{k+1}(s_I)\geq c\widetilde\rho_k^{(j)},
     &0\leq k<j,\ \nu_{k+1}\ne0
 \end{array}\right\}.
\end{equation}
For the signed-plank estimates, use the tuple
$(\widetilde\delta_1,\ldots,\widetilde\delta_j,1,\ldots,1)$,
replacing its first entry by $1$ when $\nu_1=0$, as above.
With signs $(\nu_1,\ldots,\nu_j,0,\ldots,0)$,
the support in \eqref{c13all:grouped-truncated-support} lies in a fixed
admissible enlargement of the corresponding plank \eqref{red:plates}.
The original physical scales satisfy
\begin{equation}
 \widetilde R_j:=(\widetilde\rho_n^{(j)})^{-1}
 \leq\widetilde\rho_n^{-1}\leq \ell R\leq R,
\end{equation}
so every factor $\widetilde R_j^\varepsilon$ is at most $R^\varepsilon$.
The estimates following \eqref{c13all:annulus-weight} show that, after
a fixed normalization, $W_\ell$ is an $n$-dimensional weight.

\subsubsection{Applying Proposition~\ref{c13all:prefix-transfer}}

We first prove
\begin{equation}\label{c13all:weighted-prefix-in-annulus}
 \int\Big|\sum_{\tau\in\widetilde{\mathcal J}_{r-2}} g_\tau\Big|^rW_\ell
 \lesssim  R^\varepsilon
 \prod_{i=1}^J\widetilde\delta_i^{-(i-1)}
             \int\Big(\sum_{I\in\widetilde{\mathcal J}_J}|g_I|^{r/2}\Big)^2.
\end{equation}
Using the induction hypothesis, apply the global consequence of
Proposition~\ref{c13all:prefix-transfer} with moment exponent $r-1$ and
$p=r$ to $\{g_I:I\in\widetilde{\mathcal J}_J\}$, indexed by $s_I$,
with weight $W_\ell$.
Since $\sum_{I\in\widetilde{\mathcal J}_J}g_I=\sum_{\tau\in\widetilde{\mathcal J}_{r-2}}g_\tau$,
this gives \eqref{c13all:weighted-prefix-in-annulus}.

\subsubsection{Estimating the mixed norm}

By \eqref{c13all:weighted-prefix-in-annulus}, it suffices to prove
\begin{equation}
 \begin{aligned}
 &\int\Big(\sum_{I\in\widetilde{\mathcal J}_J}|g_I|^{r/2}\Big)^2\lesssim R^\varepsilon
 \bigl(\widetilde\delta/\delta\bigr)^{r-4}
 \prod_{j=J+1}^{r-2}\widetilde\delta_j^{-(j-1)}
 \int\Big(\sum_{\tau\in\widetilde{\mathcal J}_{r-2}}|g_\tau|^2\Big)^2.
 \end{aligned}
\end{equation}

To stay within the range $4\leq q\leq(n+1-j)(n+2-j)-2$ in
\eqref{c13all:unweighted-stage}, set
\begin{equation}
 p_J:=r,\qquad
 p_j:=\min\{r,(n+1-j)(n+2-j)-2\}\quad(J<j\leq r-2).
\end{equation}
Thus $4\leq p_j\leq p_{j-1}$ for $J<j\leq r-2$; the exponent is
lowered only when required by this range.
By \eqref{c13all:coarse-amplitude}, for $0\leq j\leq r-2$ and
$I\in\widetilde{\mathcal J}_j$,
\begin{equation}
 \|g_I\|_\infty
 \lesssim \frac{\widetilde\delta}{\delta}
 \#\{\tau\in\widetilde{\mathcal J}_{r-2}:\tau\subset I\}
 \lesssim \frac{\widetilde\delta}{\delta}
 \prod_{k=j+1}^{r-2}\widetilde\delta_k^{-1}.
\end{equation}
Consequently, for $p\geq p'\geq4$,
\begin{equation}\label{c13all:mixed-drop}
 \int\Big(\sum_{I\in\widetilde{\mathcal J}_j}|g_I|^{p/2}\Big)^2
 \lesssim
 \bigl(\frac{\widetilde\delta}{\delta}\prod_{k=j+1}^{r-2}\widetilde\delta_k^{-1}\bigr)^{p-p'}
 \int\Big(\sum_{I\in\widetilde{\mathcal J}_j}|g_I|^{p'/2}\Big)^2.
\end{equation}
For $J<j\leq r-2$, apply \eqref{c13all:mixed-drop} at scale $j-1$
and then \eqref{c13all:unweighted-stage} with $q=p_j$, using the support
condition \eqref{c13all:grouped-truncated-support}, to obtain
\begin{equation}\label{c13all:capped-mixed-recursion}
 \begin{aligned}
 \int\Big(\sum_{I\in\widetilde{\mathcal J}_{j-1}}|g_I|^{p_{j-1}/2}\Big)^2
 &\lesssim  R^\varepsilon
 \bigl(\frac{\widetilde\delta}{\delta}\prod_{k=j}^{r-2}\widetilde\delta_k^{-1}\bigr)^{p_{j-1}-p_j}
 \widetilde\delta_j^{-(p_j/2-2)}\int\Big(\sum_{I\in\widetilde{\mathcal J}_j}|g_I|^{p_j/2}\Big)^2.
 \end{aligned}
\end{equation}
Iterating from $p_J=r$, then applying \eqref{c13all:mixed-drop} at
$j=r-2$ with $p=p_{r-2}$ and $p'=4$, gives
\begin{equation}
 \int\Big(\sum_{I\in\widetilde{\mathcal J}_J}|g_I|^{r/2}\Big)^2
 \lesssim R^\varepsilon\bigl(\widetilde\delta/\delta\bigr)^{r-4}
 \prod_{k=J+1}^{r-2}\widetilde\delta_k^{-(k-1)}
 \int\Big(\sum_{\tau\in\widetilde{\mathcal J}_{r-2}}|g_\tau|^2\Big)^2.
\end{equation}
Here the power of $\widetilde\delta/\delta$ is
$\sum_{j=J+1}^{r-2}(p_{j-1}-p_j)+(p_{r-2}-4)=r-4$,
and, for $J<k\leq r-2$,
\begin{equation}\label{c13all:capped-exponent}
 \frac{p_k}{2}-2+\sum_{j=J+1}^k(p_{j-1}-p_j)
 =r-2-\frac{p_k}{2}\leq k-1.
\end{equation}
If $p_k=r$, then $r-2-p_k/2=r/2-2\leq J\leq k-1$.
Otherwise, $p_k=(n+1-k)(n+2-k)-2$, so
$p_k/2\geq n-k-1\geq r-k-1$.
Applying the mixed-norm bound and
\eqref{c13all:weighted-prefix-in-annulus} with $\varepsilon/2$ proves
\eqref{c13all:annular-square-estimate}.
\qed

\subsection{The square function}\label{sec:positive-kernels}

\begin{lemma}\label{lem:positive-square-charge}
For the angular sums $g_\tau$ defined in
Section~\ref{sec:positive-angular-grouping} and every $\varepsilon>0$,
\begin{equation}\label{c13all:coarse-amplitude-charge}
 \int\Big(\sum_{\tau\in\widetilde{\mathcal J}_{r-2}}|g_\tau|^2\Big)^2
 \lesssim R^\varepsilon\bigl(\frac{\widetilde\delta}{\delta}\bigr)^2
 \int\Big|\sum_{s\in\mathbb I_\delta}g_s(y/\ell)\Big|^2\,\mathrm{d}y.
\end{equation}
\end{lemma}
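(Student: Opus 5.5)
The plan is to bound the quartic square function in three moves: use the $L^\infty$ bound \eqref{c13all:coarse-amplitude} to lower the exponent, express each $|g_\tau|$ as a kernel average of the original nonnegative $g_s$, and then remove the kernels with Lemma~\ref{c13all:kernel-removal}.

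\textbf{Step 1 (reduce to $L^2$).} First trade one power of the square function for its sup norm. By \eqref{c13all:coarse-amplitude},
\begin{equation*}
 \Big(\sum_{\tau}|g_\tau|^2\Big)^2\leq \sup_\tau\|g_\tau\|_\infty^2\,\Big(\sum_\tau|g_\tau|\Big)^2
 \lesssim\Big(\frac{\widetilde\delta}{\delta}\Big)^2\Big(\sum_\tau|g_\tau|\Big)^2 .
\end{equation*}
So it suffices to prove $\int(\sum_\tau|g_\tau|)^2\lesssim R^\varepsilon\int|\sum_s g_s(y/\ell)|^2\,\mathrm dy$. This bound involves no angular decoupling; it only requires that the sign-indefinite pieces be dominated by the positive ones.

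\textbf{Step 2 (pointwise domination).} Write $g_{s,\ell,\alpha}=g_s(\cdot/\ell)*K_{s}$, where $K_s:=\mathcal F^{-1}[\chi_{\alpha,s}(\zeta)\eta_\ell(\ell\zeta)]$. Since $\chi_{\alpha,s}$ is adapted to the containing box, $|K_s|$ is bounded by a normalized kernel adapted to the dual of that box. Note that $p_{s,\alpha}\subset C\ell^{-1}q_s$, because $\widetilde\rho_j\geq\min\{1,\rho_j/\ell\}$ fails in general but $|\lambda_j|\leq\rho_j/\ell$ holds on the support. Hence $|K_s|\lesssim \widetilde K_s$, where $\widetilde K_s\geq0$ has $\int\widetilde K_s\sim1$ and is adapted to the dual of the rescaled plank $\ell^{-1}q_s$, i.e.\ to $(q_s^*)/\ell$ up to the truncation at $1$. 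Since $g_s\geq0$, this gives
\begin{equation*}
 \sum_\tau|g_\tau|\leq\sum_{s\in\mathbb I_\delta}|g_{s,\ell,\alpha}|\lesssim\sum_{s\in\mathbb I_\delta} g_s(\cdot/\ell)*\widetilde K_s .
\end{equation*}
To meet the Fourier support hypothesis of Lemma~\ref{c13all:kernel-removal}, choose $\widetilde K_s$ as a nonnegative weight of the type $w_U$ from Section~\ref{sec:notations}. Such a weight has compact Fourier support in $C$ times the dual box and dominates the Schwartz tail.

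\textbf{Step 3 (kernel removal).} After undoing the dilation by $\ell$, the kernels $\widetilde K_s(\cdot\,\ell^{-1})$ become kernels adapted to the boxes dual to $\{\xi:|\xi\cdot\mathbf e_{j+1}(s)|\lesssim\min(\ell,\rho_j)\}$. These frequency boxes lie inside $Cq_s$, so the dual boxes contain $q_s^*$. Lemma~\ref{c13all:kernel-removal} is stated for kernels adapted exactly to $q_s^*$. I expect this to be the main obstacle: its proof must be checked to go through for these larger physical kernels. I would handle it by rerunning that proof verbatim with the plank family truncated at the scale $\ell$, namely the widths $\min(\rho_j,\ell)$, with the physical scale at most $R$. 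The wave-envelope input, \cite[Proposition~4.5]{GMO2026}, applies to this family, and the containment $\widetilde K_s*w_U\lesssim w_U'$ still holds for envelopes $U$ containing the kernel box. Alternatively, factor $\widetilde K_s=K'_s*K''_s$, with $K'_s$ adapted to $q_s^*$ and $K''_s\geq0$ a common radial kernel at scale $\ell^{-1}$. Then apply Lemma~\ref{c13all:kernel-removal} to $b_s=g_s*K''_s$, and finish with Young's inequality, $\|\sum_s g_s*K''\|_2\leq\|\sum_s g_s\|_2$. Either route yields $\int(\sum_s g_s(\cdot/\ell)*\widetilde K_s)^2\lesssim\delta^{-\varepsilon}\int(\sum_s g_s(y/\ell))^2\,\mathrm dy$. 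Using $\delta\geq R^{-1/2}$ to convert $\delta^{-\varepsilon}$ into $R^{\varepsilon}$ and combining with Step~1 proves \eqref{c13all:coarse-amplitude-charge}.
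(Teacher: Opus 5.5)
Your Step~1 is the same as the paper's first move. Step~2, however, contains a false inference, and the error carries into Step~3. The multiplier $\chi_{\alpha,s}(\zeta)\eta_\ell(\ell\zeta)$ is a bump adapted to the containing box of $p_{s,\alpha}$. That box has widths $\widetilde\rho_j$ for $j\le r-2$ and $\rho_j/\ell$ for $j>r-2$. The spatial spread of the kernel is governed by the scale at which the multiplier varies, not by a larger box that happens to contain its support.

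Since $\widetilde\rho_j\le\min\{1,\rho_j/\ell\}$, often strictly, the kernel spreads over the dual box with sides $\widetilde\rho_j^{-1}$, which is longer than $\max\{1,\ell/\rho_j\}$. For instance, take $r=4$, where $\widetilde\rho_1=\max\{|\lambda_1|,(\rho_2/\ell)^{1/2}\}$. For $\ell\sim1$, $|\lambda_1|\lesssim\delta_1\delta_2^{1/2}$ and $\delta_2\ll1$, this gives $\widetilde\rho_1\sim\delta_1\delta_2^{1/2}\ll\delta_1\sim\rho_1/\ell$. The cutoffs separating the dyadic classes $|\lambda_1|\sim2^{k_1}$ must vary at this finer scale.

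So the containment $p_{s,\alpha}\subset C\ell^{-1}q_s$ points the wrong way: $|k_s|$ is not dominated by a normalized kernel adapted to the dual of the truncated $\ell^{-1}q_s$. As a result, Step~3 removes kernels for the wrong plank family. Your factorization alternative $\widetilde K_s=K_s'*K_s''$ cannot be rescued either. The extra elongation lies along the $s$-dependent directions $\mathbf e_2(s),\ldots,\mathbf e_{r-2}(s)$, and no common radial kernel absorbs it.

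The missing idea is to take $K_s$ adapted to $(q_s')^*$, where $q_s'$ has widths $\varrho_j=\widetilde\rho_j$ for $0\le j\le r-2$ and $\varrho_j=\rho_j/\ell$ for $r-2<j\le n$. You then check that $\{q_s'\}$ is itself an admissible plank family with angular scale $\delta$ and physical scale $\ell R$. Lemma~\ref{c13all:kernel-removal} then applies directly with $b_s=g_s(\cdot/\ell)$, with no need to rerun its proof.

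The substantive verification is log-concavity of the widths across the junction $j=r-2$, that is, $\rho_{r-1}/\rho_{r-2}\le\widetilde\delta$. This follows from \eqref{c13all:hull-slopes}. Together with the endpoint condition $\widetilde\rho_{r-2}=\rho_{r-2}/\ell$, it yields the $\delta$-sequence
$\delta_j'=\widetilde\delta_j$ for $j\le r-2$, $\delta_{r-1}'=\rho_{r-1}/(\widetilde\delta\rho_{r-2})$, and $\delta_j'=\delta_j$ for $j\ge r$. All $\delta_j'\le1$, and $\prod_j\delta_j'=\delta$, so the $\delta$-separation of $\mathbb I_\delta$ matches the angular scale.

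With this family in place, your Step~1 and the kernel-removal lemma finish the proof as you intended, including the conversion $\delta^{-\varepsilon}\lesssim R^{\varepsilon}$.
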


\begin{proof}[Proof of Lemma~\ref{lem:positive-square-charge}]
Fix the dyadic scale $\ell$ and class $\alpha$ from
Section~\ref{sec:positive-decomposition}.
It suffices to construct kernels $K_s\geq0$, with $\int K_s=1$, such that
\begin{equation}
 \begin{gathered}
 |g_{s,\ell,\alpha}|\lesssim g_s(\cdot/\ell)*K_s
 \qquad(s\in\mathbb I_\delta),\\
 \int\Big|\sum_{s\in\mathbb I_\delta}g_s(\cdot/\ell)*K_s\Big|^2
 \lesssim R^\varepsilon
 \int\Big|\sum_{s\in\mathbb I_\delta}g_s(y/\ell)\Big|^2\,\mathrm{d}y.
 \end{gathered}
\end{equation}
Given such kernels, \eqref{c13all:coarse-amplitude} gives
\begin{equation}
 \begin{aligned}
 \sum_{\tau\in\widetilde{\mathcal J}_{r-2}}|g_\tau|^2
 \lesssim\frac{\widetilde\delta}{\delta}
       \sum_{\tau\in\widetilde{\mathcal J}_{r-2}}|g_\tau| \lesssim\frac{\widetilde\delta}{\delta}
       \sum_{s\in\mathbb I_\delta}g_s(\cdot/\ell)*K_s.
 \end{aligned}
\end{equation}
Squaring, integrating, and using the second kernel estimate gives
\begin{equation}
 \int\Big(\sum_{\tau\in\widetilde{\mathcal J}_{r-2}}|g_\tau|^2\Big)^2
 \lesssim\bigl(\frac{\widetilde\delta}{\delta}\bigr)^2
 \int\Big|\sum_{s\in\mathbb I_\delta}g_s(\cdot/\ell)*K_s\Big|^2
 \lesssim R^\varepsilon\bigl(\frac{\widetilde\delta}{\delta}\bigr)^2
 \int\Big|\sum_{s\in\mathbb I_\delta}g_s(y/\ell)\Big|^2\,\mathrm{d}y.
\end{equation}
This proves \eqref{c13all:coarse-amplitude-charge} once the kernels are available.
\medskip

It remains to construct these kernels and verify the two estimates. Define
\begin{equation}
 \varrho_j:=
 \begin{cases}
  \widetilde\rho_j,&0\leq j\leq r-2,\\
  \rho_j/\ell,&r-2<j\leq n.
 \end{cases}
\end{equation}
The corresponding  planks and their dual boxes are
\begin{equation}
 \begin{aligned}
 q'_s&:=\left\{\sum_{j=0}^n u_j\mathbf e_{j+1}(s):
                         |u_j|\leq\varrho_j,\ 0\leq j\leq n\right\},\\
 (q'_s)^*&:=\left\{x:|x\cdot\mathbf e_{j+1}(s)|
                   \leq\varrho_j^{-1},\ 0\leq j\leq n\right\}.
 \end{aligned}
\end{equation}
By \eqref{c13all:hull-tuple}--\eqref{c13all:hull-slopes}, the ratios
$\varrho_j/\varrho_{j-1}$ decrease, and
\begin{equation}
 \frac{\varrho_n}{\varrho_{n-1}}
 =\frac{\varrho_{n-1}}{\varrho_{n-2}}=\delta,
 \qquad \varrho_n^{-1}=\ell R\leq R.
\end{equation}
The corresponding $\de$-sequence is
\begin{equation}
 \delta'_j:=
 \begin{cases}
  \widetilde\delta_j,&1\leq j\leq r-2,\\
  \rho_{r-1}/(\widetilde\delta\rho_{r-2}),&j=r-1,\\
  \delta_j,&r\leq j\leq n-1.
 \end{cases}
\end{equation}
Since $\rho_{r-1}/\rho_{r-2}\leq \widetilde\delta$, these scales satisfy
\begin{equation}
 \begin{gathered}
 0<\delta'_j\leq1\quad(1\leq j\leq n-1),\qquad
 \prod_{j=1}^{n-1}\delta'_j
 =\frac{\varrho_{n-1}}{\varrho_{n-2}}=\delta,\\
 \varrho_j=\prod_{i=1}^{\min(j,n-1)}
                  (\delta'_i)^{j+1-i}\quad(0\leq j\leq n).
 \end{gathered}
\end{equation}
Thus Lemma~\ref{c13all:kernel-removal} applies with $q_s$ replaced by $q'_s$,
angular separation $\delta$ and physical scale $\ell R$.

By the definition of $g_{s,\ell,\alpha}$ in
Section~\ref{sec:positive-signed-decomposition}, we have
\begin{equation}
 g_{s,\ell,\alpha}=g_s(\cdot/\ell)*k_s,
\end{equation}
where $\wh k_s:=\chi_{\alpha,s}(\zeta)\eta_\ell(\ell\zeta)$ is a smooth bump adapted to $q'_s$.
Integration by parts and the weights from
Section~\ref{sec:notations} give nonnegative kernels $K_s$ adapted to
$(q'_s)^*$, with $\int K_s=1$,
$\operatorname{supp}\widehat K_s\subset Cq'_s$ and $|k_s|\lesssim K_s$.
Since $g_s\geq0$, we obtain
\begin{equation}
 |g_{s,\ell,\alpha}|
 \leq g_s(\cdot/\ell)*|k_s|
 \lesssim g_s(\cdot/\ell)*K_s.
\end{equation}
This establishes the required pointwise bound.
Finally, Lemma~\ref{c13all:kernel-removal}, applied with $b_s=g_s(\cdot/\ell)$,
gives the required second estimate. 
This completes the construction and the proof.
\end{proof}

\section{Proof of Theorem~\ref{thm:translated-plates}}\label{sec:completion}

We prove Theorem~\ref{thm:translated-plates} with
$\eta_p:=\min\{p-n,1\}/(32np)$.
To simplify the notation, write the summands in \eqref{red:translated-form} as
\begin{equation}\label{c13all:normalized}
 \mathsf{T}g(z)=\sum_s\mathsf{T}_sg(z), \qquad \mathsf{T}_sg(z)=a_s[\widehat g(\xi)\phi_{p_s}(\xi,\tau)]^\vee
                      (z-Ru_s\Gamma(s)).
\end{equation}

Fix
$B=B((x_B,t_B),R)$ with $|t_B|\lesssim R$, and put
$B^x:=B(x_B,R)\subset\mathbb R^n$. Let us prove
\begin{equation}\label{c13all:restricted-moment}
 \|\mathsf{T}g\|_{L^p(E\cap B)}^p
 \lesssim  R^{-p+\varepsilon}\delta^{-n}
           \|g\|_\infty^{p-4}\|g\|_{L^4(W_{B^x})}^4,
 \qquad n<p\leq n+1.
\end{equation}
Once \eqref{c13all:restricted-moment} is proved, for $n \geq 4$, a standard 
interpolation argument yields
\begin{equation}
 \|\mathsf{T}g\|_{L^p(E)}
 \lesssim\delta^{-1}R^{-1-\eta_p}\|g\|_p,
 \qquad  p>n.
\end{equation}
Thus \eqref{red:unit-estimate} holds for $n\geq4$;
see Remark~\ref{c13all:lowdim} for $n=2,3$.
\medskip

To prove \eqref{c13all:restricted-moment}, fix $n<p\leq n+1$
and set $J:=\lfloor n/2\rfloor-1$.
Recall  $\mathcal J_j$, $0\leq j\leq n-1$,
in Section~\ref{sec:scales-angular}. 
Choose a function $\chi_B$ such that 
\begin{equation}\label{completion:common-cutoff}
 |\chi_B|\geq1\text{ on }B,\qquad |\chi_B|\lesssim W_B,\qquad
 \operatorname{supp}\widehat{\chi_B}\subset B(0,c/R).
\end{equation}
The proof combines three estimates. We start with
\begin{equation}\label{completion:first-scales}
 \|\mathsf{T}g\|_{L^p(E\cap B)}^p
 \lesssim  R^\varepsilon
       \prod_{i=1}^J\delta_i^{-(i-1)}\int|\chi_B|^{p}
 \Big(\sum_{I\in\mathcal J_{J}}
       \Big|\sum_{s\in\mathbb I_\delta\cap I}\mathsf{T}_sg\Big|^{p/2}\Big)^2.
\end{equation}
For the remaining  scales, we prove
\begin{equation}\label{completion:remaining-scales}
 \begin{aligned}
 &\int|\chi_B|^p
 \Big(\sum_{I\in\mathcal J_J}
       \Big|\sum_{s\in\mathbb I_\delta\cap I}\mathsf{T}_sg\Big|^{p/2}\Big)^2\\
 &\quad\lesssim R^\varepsilon
       \bigl(R^{-1}\|g\|_\infty\bigr)^{p-4}
       \prod_{j=J+1}^{n-1}\delta_j^{-(j-1)}
       \int|\chi_B|^4
       \Big(\sum_{s\in\mathbb I_\delta}|\mathsf{T}_sg|^2\Big)^2.
 \end{aligned}
\end{equation}
The final estimate is
\begin{equation}\label{completion:last-scales}
 \int|\chi_B|^4\Big(\sum_{s\in\mathbb{I}_\delta}|\mathsf{T}_sg|^2\Big)^2
 \lesssim  R^{-3+\varepsilon}
                         \|g\|_{L^4(W_{B^x})}^4.
\end{equation}
Since $\prod_{j=1}^{n-1}\delta_j^{-(j-1)}=\delta^{-n}R^{-1}$,
these three estimates give
\begin{equation}
 \begin{aligned}
 \|\mathsf{T}g\|_{L^p(E\cap B)}^p
 &\lesssim R^\varepsilon(\delta^{-n}R^{-1})
       \bigl(R^{-1}\|g\|_\infty\bigr)^{p-4}
       R^{-3}\|g\|_{L^4(W_{B^x})}^4\\
 &=R^{-p+\varepsilon}\delta^{-n}
       \|g\|_\infty^{p-4}\|g\|_{L^4(W_{B^x})}^4,
 \end{aligned}
\end{equation}
which is \eqref{c13all:restricted-moment}. In the next subsections, we prove the three estimates.

\subsection{Applying Corollary~\ref{cor:weighted-oscillatory-sums}}\label{sec:completion-first-scales}
We prove \eqref{completion:first-scales}.

We use the admissible scale tuple
$(\delta_1,\ldots,\delta_J,1,\ldots,1)$ at scale $R_J\leq R$,
as in Section~\ref{sec:frame-planks}.
By \eqref{completion:common-cutoff} and
\cite[Lemma~3.10 and Section~9.1]{GMO2026}, for every
$I\in\mathcal J_j$ and $0\leq j\leq n-1$,
$\chi_B\sum_{s\in\mathbb I_\delta\cap I}\mathsf{T}_sg$ has Fourier support
in a fixed admissible enlargement of the truncated signed plank
\eqref{red:plates} with widths $\rho_l^{(j)}$ at the center of $I$.
By \eqref{conj:katz-tao}, $w:=\mathbf1_E$ satisfies
\eqref{c13all:weight}.
Apply Corollary~\ref{cor:weighted-oscillatory-sums}
at scale $R_J$ to $\{\chi_B\sum_{s\in\mathbb I_\delta\cap I}\mathsf{T}_sg:I\in\mathcal J_J\}$.
Since $R_J\leq R$, we obtain
\begin{equation}
 \|\mathsf{T}g\|_{L^p(E\cap B)}^p
 \lesssim\int\Big|\sum_{I\in\mathcal J_J}\chi_B\sum_{s\in\mathbb I_\delta\cap I}\mathsf{T}_sg\Big|^p w
 \lesssim R^\varepsilon\prod_{i=1}^J\delta_i^{-(i-1)}
 \int\Big(\sum_{I\in\mathcal J_J}|\chi_B\sum_{s\in\mathbb I_\delta\cap I}\mathsf{T}_sg|^{p/2}\Big)^2.
\end{equation}
This is \eqref{completion:first-scales}.

\subsection{Applying Proposition \ref{prop:gmo-unweighted}}
We prove \eqref{completion:remaining-scales} by iterating
\eqref{c13all:unweighted-stage} and lowering the exponent from $p$ to $4$.
Let $K_s:=(\phi_{p_s})^\vee$. Integration by parts using
\eqref{red:adapted} and integration in the frame coordinates give the first
bound below, since the Jacobian on each spatial slice is
$|\mathbf e_{n+1}(s)\cdot(0,\ldots,0,1)|=(1+|\gamma(s)|^2)^{-1/2}\gtrsim1$.
Young's inequality and $\delta$-separation give the other two:
\begin{equation}\label{c13all:kernel-bound}
 \begin{gathered}
 \sup_{t\in\mathbb R}\|K_s(\cdot,t)\|_1
 \lesssim\Big(\prod_{l=0}^n\rho_l\Big)
          \prod_{l=0}^{n-1}\rho_l^{-1}
 =R^{-1},\\
 \sup_{s\in\mathbb I_\delta}\|\mathsf{T}_sg\|_\infty
 \lesssim R^{-1}\|g\|_\infty,\qquad
 \|\mathsf{T}\|_{\infty\to\infty}\lesssim\delta^{-1}R^{-1}.
 \end{gathered}
\end{equation}

Set
\begin{equation}
 p_J:=p,\qquad
 p_j:=\min\{p,(n+1-j)(n+2-j)-2\},\quad J<j\leq n-1.
\end{equation}
Then $4\leq p_j\leq p_{j-1}$ and $p_{n-1}=4$.
Separation and \eqref{c13all:kernel-bound} give, for
$I\in\mathcal J_{j-1}$ and $J<j\leq n-1$,
\begin{equation}
 \Big\|\chi_B\sum_{s\in\mathbb I_\delta\cap I}\mathsf{T}_sg\Big\|_\infty
 \lesssim R^{-1}\|g\|_\infty
                 \prod_{k=j}^{n-1}\delta_k^{-1}.
\end{equation}
Using the support condition from Section~\ref{sec:completion-first-scales},
lower the exponent with this bound and apply \eqref{c13all:unweighted-stage}
at the  exponent $p_j$:
\begin{equation}\label{c13all:actual-drop}
 \begin{aligned}
 &\int|\chi_B|^{p_{j-1}}
 \Big(\sum_{I\in\mathcal J_{j-1}}
       \Big|\sum_{s\in\mathbb I_\delta\cap I}\mathsf{T}_sg\Big|^{p_{j-1}/2}\Big)^2\\
 &\quad\lesssim R^\varepsilon
 \bigl(R^{-1}\|g\|_\infty
             \prod_{k=j}^{n-1}\delta_k^{-1}\bigr)^{p_{j-1}-p_j}
 \delta_j^{-(p_j/2-2)}
 \int|\chi_B|^{p_j}
 \Big(\sum_{I\in\mathcal J_j}
       \Big|\sum_{s\in\mathbb I_\delta\cap I}\mathsf{T}_sg\Big|^{p_j/2}\Big)^2.
 \end{aligned}
\end{equation}
Iterating \eqref{c13all:actual-drop}, with
$\sum_{j=J+1}^{n-1}(p_{j-1}-p_j)=p-4$ and
$\beta_j:=p-2-p_j/2$ for $J<j\leq n-1$, gives
\begin{equation}
 \begin{aligned}
 &\int|\chi_B|^p
 \Big(\sum_{I\in\mathcal J_J}
       \Big|\sum_{s\in\mathbb I_\delta\cap I}\mathsf{T}_sg\Big|^{p/2}\Big)^2\\
 &\quad\lesssim R^\varepsilon
       \bigl(R^{-1}\|g\|_\infty\bigr)^{p-4}
       \prod_{j=J+1}^{n-1}\delta_j^{-\beta_j}
       \int|\chi_B|^4
       \Big(\sum_{s\in\mathbb I_\delta}|\mathsf{T}_sg|^2\Big)^2\\
 &\quad\leq R^\varepsilon
       \bigl(R^{-1}\|g\|_\infty\bigr)^{p-4}
       \prod_{j=J+1}^{n-1}\delta_j^{-(j-1)}
       \int|\chi_B|^4
       \Big(\sum_{s\in\mathbb I_\delta}|\mathsf{T}_sg|^2\Big)^2.
 \end{aligned}
\end{equation}
For the last inequality, if $p_j=p$, then
$\beta_j=p/2-2\leq(n-3)/2\leq J\leq j-1$.
Otherwise,
$\beta_j-(j-1)=p-n-1-(n+1-j)(n-j)/2\leq0$ since $p\leq n+1$.
This proves \eqref{completion:remaining-scales}.

\subsection{The last estimate}
We prove \eqref{completion:last-scales}. For $n\geq2$, the square function
estimate \cite[Proposition~9.4]{GMO2026} applies to the operator of
Section~\ref{sec:frame-planks} and balls with $|t_B|\lesssim R$:
\begin{equation}\label{c13all:operator-four}
 \int\Big(\sum_{s\in\mathbb{I}_\delta}|\mathsf{T}_sg|^2\Big)^2W_B
 \lesssim R^{-3+\varepsilon}\|g\|_{L^4(W_{B^x})}^4.
\end{equation}
Since $|\chi_B|^4 \lesssim W_B$ by \eqref{completion:common-cutoff},
\eqref{c13all:operator-four} immediately gives \eqref{completion:last-scales}.

\begin{remark}[The dimensions two and three]\label{c13all:lowdim}
Let $n=2,3$ and
$\operatorname{supp}\widehat g\subset\{\xi\in\mathbb R^n:1/2\leq|\xi|\leq2\}$.
For the fixed admissible enlargements, the projected support estimates in
\cite[Section~9.3, equations~(9.57)--(9.61)]{GMO2026} and Plancherel give
the $L^2$ bound below, while \cite[Theorem~5.3 and Proposition~9.5]{GMO2026},
together with \eqref{completion:common-cutoff} and \eqref{c13all:operator-four},
give the $L^4$ bound:
\begin{equation}
 \begin{aligned}
 \|\mathsf{T}g\|_{L^2(\mathbb R^{n+1})}
 &\lesssim R^{-1/2}\|g\|_2,\\
 \|\mathsf{T}g\|_{L^4(\mathbb R^n\times[R,2R])}
 &\lesssim R^{-3/4+\varepsilon}\|g\|_4.
 \end{aligned}
\end{equation}
Interpolation and $\delta\leq R^{-1/n}$ from \eqref{red:scale-equation} give
\begin{equation}
 \|\delta R\cdot\mathsf{T}g\|_{L^p(\mathbb R^n\times[R,2R])}
 \lesssim R^{1/p-1/n+\varepsilon}\|g\|_p,\qquad 2\leq p\leq4.
\end{equation}
Taking $\varepsilon$ small and interpolating the $p=4$ bound with
\eqref{c13all:kernel-bound} for $p>4$ yields \eqref{red:unit-estimate}:
\begin{equation}
 \|\mathsf{T}g\|_{L^p(E)}
 \lesssim\delta^{-1}R^{-1-\eta_p}\|g\|_p,\qquad p>n.
\end{equation}
\end{remark}

\newpage

\appendix
\renewcommand{\theequation}{\thesection.\arabic{equation}}
\section{Proof of the refined decoupling}\label{app:refined}

We prove Proposition~\ref{c13all:rp} in the appendix. Fix $n\geq2$,
$1\leq J\leq n-1$, $2\leq p\leq(n+1-J)(n+2-J)$, and $\varepsilon>0$. 
For the data in Proposition~\ref{c13all:rp}, our goal is
\begin{equation}
 \left\|\sum_{T\in\mathbb T}f_T\right\|_{L^p(Y)}
 \lesssim\delta^{-\varepsilon}M^{1/2-1/p}
       \left(\sum_{T\in\mathbb T}\|f_T\|_p^p\right)^{1/p}.
\end{equation}
Choose $a_0>0$ sufficiently small and put $a:=a_0/2$.
Use the admissible scales $0<\delta_j\leq1$, with $\delta_j=1$ for
$J<j\leq n-1$. By \eqref{red:scales},
\begin{equation}\label{app:refined-initial-scales}
 \rho_0=1,\qquad
 \rho_j=\prod_{\ell=1}^{\min(j,J)}\delta_\ell^{j+1-\ell}
 \quad(1\leq j\leq n),\qquad
 \delta=\prod_{j=1}^J\delta_j,\qquad \rho_n=R^{-1}.
\end{equation}

\subsection*{The change of coordinates}
Put $v:=\mathbf e_1$ and write
$\xi=\sum_{j=0}^n\lambda_jv^{(j)}(s)$ for $\xi$ in a fixed admissible
enlargement of $p_s$, $s\in[-1,1]$. We treat positive signs; the other cases
are analogous. The Frenet equations and $\rho_0\geq\cdots\geq\rho_n$ give,
for a fixed sufficiently small $0<\tau<1/2$,
\begin{equation}\label{app:refined-signed-change}
 \begin{gathered}
 |\lambda_j|\lesssim\rho_j\quad(0\leq j\leq n),\\
 \delta_\ell<\tau\quad\Longrightarrow\quad
 \lambda_{\ell-1}\sim\rho_{\ell-1}\quad(1\leq\ell\leq J),
 \end{gathered}
\end{equation}
Recall the wave packet $T$ centered at $x_T$ with parameter $s_T$, and define
the box $T^{(0)}$ in derivative coordinates by
\begin{equation}\label{app:refined-box-definitions}
 \begin{aligned}
 T&=\{x\in\mathbb R^{n+1}:
 |(x-x_T)\cdot\mathbf e_{j+1}(s_T)|\leq\rho_j^{-1},\ 0\leq j\leq n\},\\
 T^{(0)}&:=\{x\in\mathbb R^{n+1}:
 |(x-x_T)\cdot v^{(j)}(s_T)|\leq\rho_j^{-1},\ 0\leq j\leq n\}.
 \end{aligned}
\end{equation}
For $\delta$ sufficiently small in terms of $a$,
\begin{equation}\label{app:refined-box-comparison}
 T^{(0)}\subset CT,\qquad T\subset CT^{(0)},\qquad
 10\delta^{-a}T^{(0)}\subset10\delta^{-a_0}T.
\end{equation}
We henceforth switch to the derivative coordinates and unambiguously write $T$ for $T^{(0)}$.

After a finite partition and rescaling, take $v$ in a sufficiently small
$C^L$ neighborhood of $(1,s,\ldots,s^n/n!)$, with $L$ large.
For $1\leq k\leq J$ and $0<h<\tau$, partition the parameter interval into
$\mathcal I_h$, with intervals of length $h$ and centers $s_I$.

We will use the following decoupling inequality as a black box. 
Suppose that
\begin{equation}\label{app:refined-coarse-support}
 \operatorname{supp}\widehat F_I\subset
 \left\{\sum_{j=0}^n\lambda_jv^{(j)}(s_I):
 \begin{array}{ll}
 |\lambda_j|\lesssim1,&0\leq j<k,\\
 |\lambda_j|\lesssim h^{j+1-k},&k\leq j\leq n,\\
 \lambda_{k-1}\gtrsim1,&
 \end{array}\right\}.
\end{equation}
Then we have
\begin{equation}\label{c13all:ordinary-flat-decoupling}
 \left\|\sum_{I\in\mathcal I_h} F_I\right\|_p
 \lesssim  h^{-\eta}
       \left(\sum_{I\in\mathcal I_h}\|F_I\|_p^2\right)^{1/2},
 \qquad 2\leq p\leq(n+1-k)(n+2-k),
\end{equation}
for every $\eta>0$.
This is the $\ell^2$ estimate in
\cite[Section~7.4, proof of Theorem~4.4]{BGHS2021}, before H\"older's
inequality, in the reverse Frenet frame with dimension parameter
$n+1-k$, scale $h$, and ambient dimension $n+1$.

\medskip

We discuss a notation used in the proof. It is called \textit{parameter tuple} denoted by
$\mathcal P=(k;d_k,\ldots,d_J)$, where $1\leq k\leq J$ is the first remaining index and $0<d_j\leq1$ for $k\leq j\leq J$.
The associated widths $r_j$ and angular scale $d$ satisfy
\begin{equation}\label{c13all:rp-thin-class}
 \begin{split}
 &0<r_0\leq\cdots\leq r_{k-2}\leq1,
       \qquad r_{k-1}=1,\\
 &r_j:=\prod_{\ell=k}^{\min(j,J)}
             d_\ell^{\,j+1-\ell}\quad(k\leq j\leq n),
       \qquad d=\prod_{\ell=k}^J d_\ell.
 \end{split}
\end{equation}
The first string is absent when $k=1$. The initial parameter tuple is
$\mathcal P_0=(1;\delta_1,\ldots,\delta_J)$, for which $d_j=\delta_j$,
$d=\delta$, and $r_j=\rho_j$ by \eqref{app:refined-initial-scales}.
We also allow the terminal parameter tuple $(J+1;)$, with no remaining
scales and $d=1$.

\subsection*{Step 1. Rescaling}
Fix a parameter tuple $\mathcal P=(k;d_k,\ldots,d_J)$ with $1\leq k\leq J$,
and widths $r_j$ and angular scale $d$ satisfying \eqref{c13all:rp-thin-class}.
For each wave packet $f_T$, let $s_T$ be its angular parameter and
$x_T$ its center. Assume that every
$\xi=\sum_{j=0}^n\lambda_jv^{(j)}(s_T)\in\operatorname{supp}\widehat f_T$
satisfies
\begin{equation}\label{app:refined-induction-support}
 |\lambda_j|\lesssim r_j\quad(0\leq j\leq n),\qquad
 d_\ell<\tau\ \Longrightarrow\
 \lambda_{\ell-1}\sim r_{\ell-1}
 \quad(k\leq\ell\leq J).
\end{equation}
Here $0<\tau<1/2$ is the fixed sufficiently small constant in
\eqref{app:refined-signed-change}, independent of the parameter tuple.
The lower bound $\lambda_{\ell-1}\gtrsim r_{\ell-1}$ is imposed only
when $d_\ell<\tau$.
At the initial parameter tuple, \eqref{app:refined-signed-change} gives
\eqref{app:refined-induction-support}. The corresponding physical box is
\begin{equation}\label{app:refined-current-box}
 T=\{x: |(x-x_T)\cdot v^{(j)}(s_T)|\leq r_j^{-1},
                      \ 0\leq j\leq n\}.
\end{equation}

Let $d_k\leq h\leq1$.
The following lemma gives a change of variables for these wave
packets, with new angular scale $d/h$.
\begin{lemma}[Rescaling]\label{app:refined-rescaling-lemma}
Let $d_k\leq h\leq1$ and let $I$ be an interval of length $h$ centered
at $s_I$. For $s_T\in I$, put
$u_T:=(s_T-s_I)/h$.
Put
\begin{equation}
 \begin{aligned}
 A_I&:=h^{k-1}\operatorname{diag}(1,h^{-1},\ldots,h^{-n})
       [v(s_I),v'(s_I),\ldots,v^{(n)}(s_I)]^{-1},\\
 v_I(u)&:=h^{1-k}A_Iv(s_I+hu).
 \end{aligned}
\end{equation}
Under the change of variables $x=A_I^Ty$, the wave packets
$f_T(A_I^T\,\cdot)$, $s_T\in I$, have angular parameters $u_T$.
Define the new scales
\begin{equation}\label{c13all:rp-rescale}
 \begin{gathered}
 r_j':=h^{k-1-j}r_j\quad(0\leq j\leq n),\\
 d':=d/h,\qquad d_k':=d_k/h.
 \end{gathered}
\end{equation}
Then they satisfy \eqref{c13all:rp-thin-class} and
\eqref{app:refined-induction-support} with $v,d_k,r_j$
replaced by $v_I,d_k',r_j'$. 
\end{lemma}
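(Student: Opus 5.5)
The plan is a direct computation in the coordinates $y$ with $x=A_I^Ty$. The steps are: track how the frequency coefficients transform, verify the new support bounds, and check the structural conditions \eqref{c13all:rp-thin-class} for the rescaled widths.

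\textbf{Frequency coordinates.} If $F(x)=f_T(x)$ has Fourier support at $\xi$, then $f_T(A_I^T y)$ has Fourier support at $\zeta=A_I\xi$. Write $\xi=\sum_j\lambda_jv^{(j)}(s_T)$. Taylor expansion around $s_I$ gives
\[
v^{(j)}(s_T)=\sum_{i\geq j}\frac{(s_T-s_I)^{i-j}}{(i-j)!}v^{(i)}(s_I)+O(|s_T-s_I|^{n+1-j}).
\]
Since $h^{1-k}A_I$ sends $v^{(i)}(s_I)$ to $h^{-i}$ times the $i$-th coordinate vector, we have $v_I^{(j)}(u)=h^{j}\,h^{1-k}A_I v^{(j)}(s_I+hu)$. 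Hence
\[
\zeta=A_I\xi=\sum_j\lambda_j h^{k-1-j}v_I^{(j)}(u_T).
\]
So the new coefficients are $\lambda_j'=h^{k-1-j}\lambda_j$ in the frame $v_I^{(j)}(u_T)$. This gives $|\lambda_j'|\lesssim h^{k-1-j}r_j=r_j'$, and every signed relation $\lambda_{\ell-1}\sim r_{\ell-1}$ rescales to $\lambda'_{\ell-1}\sim r'_{\ell-1}$. The physical box \eqref{app:refined-current-box} transforms the same way: $x\cdot v^{(j)}(s_T)=y\cdot A_Iv^{(j)}(s_T)=h^{k-1-j}\,y\cdot v_I^{(j)}(u_T)$, so the new box has widths $r_j'^{-1}$. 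The angular parameters are $u_T=(s_T-s_I)/h\in[-1/2,1/2]$. Also, $v_I$ stays in a small $C^L$ neighborhood of the model curve: for the model curve, $v_I$ is exactly the model curve up to normalization, and for perturbations the derivatives of order $>n$ gain powers of $h$.

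\textbf{Structural conditions.} For $j\leq k-1$ we get $r_j'=h^{k-1-j}r_j$, so $r'_{k-1}=1$. The old widths satisfy $r_0\leq\cdots\leq r_{k-2}\leq 1$, but the factors $h^{k-1-j}$ decrease in $j$, so monotonicity of the first string must be checked. I expect it can be arranged: since $r_j\geq r_{j+1}$ in the underlying decreasing sequence, and the string records widths of already-processed indices, we get $r'_j/r'_{j+1}=h^{-1}r_j/r_{j+1}$. \eqref{c13all:rp-thin-class} only asks $0<r'_j\leq1$ and ordering; if the ordering written is actually $r_0\geq\cdots$, this is immediate from $h\leq1$.

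For $j\geq k$, write $r_j=\prod_{\ell=k}^{\min(j,J)}d_\ell^{j+1-\ell}$. Then
\[
h^{k-1-j}r_j=(d_k/h)^{j+1-k}\prod_{\ell=k+1}^{\min(j,J)}d_\ell^{j+1-\ell},
\]
which is exactly the product formula with $d_k'=d_k/h$ and the other $d_\ell$ unchanged. This gives $d'=d/h$, and $d_k'\in(0,1]$ because $h\geq d_k$. The signed condition for $\ell=k$ is triggered by $d_k'<\tau$, which implies $d_k<\tau$, so the old lower bound is available. For $\ell>k$ the $d_\ell$ are unchanged.

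\textbf{Main obstacle.} The delicate point is the Taylor remainder. The error terms $O(h^{i-j})\lambda_j$ feeding into coordinate $i>j$ must be $\lesssim r_i'$ after rescaling. This holds because $|u_T|\leq 1/2$, so the error mixes coordinates with bounded coefficients, and bounded mixing is absorbed into the admissible enlargement. The signed lower bounds are harder, since mixing from lower indices could destroy $\lambda'_{\ell-1}\sim r'_{\ell-1}$. I would handle this exactly as in Lemma~\ref{prelim:change-of-frame}. Because $\lambda'$ is computed in the frame at $u_T$ itself, not at $0$, there is in fact no mixing: the identity $\zeta=\sum\lambda_jh^{k-1-j}v_I^{(j)}(u_T)$ is exact. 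So the lower bounds transfer exactly, and only the $C^L$-closeness of $v_I$ needs the remainder estimate.
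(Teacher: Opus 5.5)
Your argument follows the paper's proof. The exact chain-rule identity $A_Iv^{(j)}(s_T)=h^{k-1-j}v_I^{(j)}(u_T)$ gives $\lambda_j'=h^{k-1-j}\lambda_j$ in the frame at $u_T$, with no mixing. Hence $\lambda_j'/r_j'=\lambda_j/r_j$, and all upper and signed lower bounds transfer, using $d_k'<\tau\Rightarrow d_k<\tau$. For $j\ge k$ the widths are the product formula with $d_k'=d_k/h$. The $C^L$ closeness of $v_I$ to the model curve comes from Taylor's formula. Your variant of that last step also works once written out: use $v_I^{(i)}(0)=\gamma^{(i)}(0)$ for $i\le n$, the bound $\|v_I^{(r)}\|_\infty=h^{r+1-k}\|A_Iv^{(r)}\|_\infty\lesssim\epsilon_0h^{r-n}$ for $r>n$, and then Taylor expand at $u=0$. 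The paper instead Taylor-expands $v$ at $s_I$ and uses $\|A_I\|\lesssim h^{k-1-n}$.

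\textbf{The error: the first string of widths.} You compute $r_j'/r_{j+1}'=h^{-1}r_j/r_{j+1}$, invoke $r_j\ge r_{j+1}$, and then hedge about which ordering \eqref{c13all:rp-thin-class} intends. In fact $r_j'/r_{j+1}'=h^{(k-1-j)-(k-2-j)}\,r_j/r_{j+1}=h\,r_j/r_{j+1}$. Since $h\le 1$, the factors $h^{k-1-j}$ are nondecreasing in $j$, not decreasing. The first string really is increasing, $r_0\le\cdots\le r_{k-2}\le r_{k-1}=1$: the full sequence rises to $1$ at index $k-1$ and decreases afterwards. So the check is one line:
\begin{itemize}
\item $r_j'/r_{j+1}'\le h\le 1$ for $0\le j<k-2$;
\item $r_{k-2}'=hr_{k-2}\le 1=r_{k-1}'$;
\item $0<r_j'\le r_j\le 1$.
\end{itemize}

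\textbf{A missing item.} You should also record the $d'$-separation of the new angular parameters, $|u_T-u_{T'}|=h^{-1}|s_T-s_{T'}|\ge d/h=d'$. The class in \eqref{app:refined-dec-definition} requires it, and the paper checks it alongside the new scales.

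With these two fixes your proof is complete.
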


\begin{proof}
The chain rule gives
\begin{equation}\label{app:refined-rescaled-frame}
 \begin{gathered}
 v_I^{(j)}(u)=h^{j+1-k}A_Iv^{(j)}(s_I+hu)\quad(0\leq j\leq L),\\
 [v_I(0),v_I'(0),\ldots,v_I^{(n)}(0)]=\operatorname{Id}_{n+1}.
 \end{gathered}
\end{equation}
Write
$\gamma(u)=(1,u,u^2/2!,\ldots,u^n/n!)$ and assume
$\|v-\gamma\|_{C^L}\leq\epsilon_0$, with $L$ large and $\epsilon_0$
fixed and sufficiently small. Then
\begin{equation}
 \|[v(s),\ldots,v^{(n)}(s)]^{-1}\|\lesssim1,
 \qquad \|v^{(r)}\|_\infty\lesssim\epsilon_0
                         \quad(n+1\leq r\leq L).
\end{equation}
For $0\leq r\leq n$, Taylor's formula gives
\begin{equation}
 v^{(r)}(s_I+hu)
 =\sum_{j=r}^n\frac{(hu)^{j-r}}{(j-r)!}v^{(j)}(s_I)
       +O(\epsilon_0h^{n+1-r}),\qquad |u|\leq1.
\end{equation}
By the definition of $A_I$,
\[
 \begin{gathered}
 \|A_I\|\lesssim h^{k-1-n},\\
 h^{r+1-k}A_I\sum_{j=r}^n
 \frac{(hu)^{j-r}}{(j-r)!}v^{(j)}(s_I)=\gamma^{(r)}(u)
 \quad(0\leq r\leq n).
 \end{gathered}
\]
Thus \eqref{app:refined-rescaled-frame} and the Taylor remainder give
\[
 \begin{aligned}
 \|v_I^{(r)}-\gamma^{(r)}\|_\infty
 &\lesssim h^{r+1-k}\|A_I\|\,\epsilon_0h^{n+1-r}\\
 &\lesssim\epsilon_0h^{r-n}h^{n+1-r}
 =\epsilon_0h\qquad(0\leq r\leq n).
 \end{aligned}
\]
For $n<r\leq L$, we have $\gamma^{(r)}=0$, so
\[
 \|v_I^{(r)}-\gamma^{(r)}\|_\infty
 \lesssim h^{r+1-k}\|A_I\|\,\epsilon_0
 \lesssim\epsilon_0h^{r-n}\leq\epsilon_0h.
\]
Here all supremum norms are over $|u|\leq1$. We obtain
\begin{equation}\label{app:refined-curve-bound}
 \|v_I-\gamma\|_{C^L}\lesssim\epsilon_0h
 \qquad(0<h\leq1).
\end{equation}
\medskip

For the Fourier support, put $\xi'=A_I\xi$. Then, using \eqref{app:refined-induction-support} and the definition of $r_j'$ in \eqref{c13all:rp-rescale}, we obtain
\begin{equation}\label{app:refined-transformed-support}
 \begin{gathered}
 \mathcal F_y[f_T(A_I^T\,\cdot)](\xi')
   =|\det A_I|^{-1}\widehat f_T(A_I^{-1}\xi'),\\
 \xi=\sum_{j=0}^n\lambda_jv^{(j)}(s_T)
 \quad\Longrightarrow\quad
 \xi'=\sum_{j=0}^n\lambda_j'v_I^{(j)}(u_T),
 \qquad\lambda_j'=h^{k-1-j}\lambda_j\quad(0\leq j\leq n),\\
 |\lambda_j'|\lesssim h^{k-1-j}r_j=r_j',\qquad
 \frac{\lambda_j'}{r_j'}=\frac{\lambda_j}{r_j}\quad(0\leq j\leq n).
 \end{gathered}
\end{equation}
Since $d_k'<\tau$ implies $d_k=h d_k'<\tau$, and
$d_\ell$ is unchanged for $\ell>k$, the invariant ratios preserve
all lower bounds in \eqref{app:refined-induction-support}.

For the widths with index below $k-1$,
\begin{equation}
 \frac{r_j'}{r_{j+1}'}=h\frac{r_j}{r_{j+1}}\leq1
 \quad(0\leq j<k-2),\qquad
 r_{k-2}'=hr_{k-2}\leq1=r_{k-1}'
 \quad(k\geq2).
\end{equation}
For the remaining widths and the angular parameters,
\begin{equation}\label{app:refined-new-scales}
 \begin{gathered}
 r_j'=\left(\frac{d_k}{h}\right)^{j+1-k}
       \prod_{\ell=k+1}^{\min(j,J)}d_\ell^{j+1-\ell}
                \quad(k\leq j\leq n),\\
 d'=\frac{d_k}{h}\prod_{\ell=k+1}^J d_\ell
          =\frac{d}{h}\leq1,\qquad
 |u_T-u_{T'}|=h^{-1}|s_T-s_{T'}|\geq d'
                 \quad(s_T\ne s_{T'}).
 \end{gathered}
\end{equation}
These give \eqref{c13all:rp-thin-class} and $d'$-separation.

\end{proof}

\subsection*{The induction constant}
For $r:=J-k+1$ remaining parameters, fix constants
\begin{equation}\label{app:refined-enlargements}
 L_0\geq10,\qquad L_r\gg L_{r-1}
                       \quad(1\leq r\leq J).
\end{equation}
The implicit separation is chosen sufficiently large in terms of $n$
and the fixed $\tau$.

For a parameter tuple $\mathcal P=(k;d_k,\ldots,d_J)$, define
\begin{equation}\label{app:refined-dec-definition}
 \begin{gathered}
 d(\mathcal P):=\prod_{\ell=k}^J d_\ell, \qquad
 \text{Dec}(\mathcal P):=
 \sup
 \frac{\|\sum_{T\in\mathbb T}f_T\|_{L^p(Y)}}
 {M^{1/2-1/p}(\sum_{T\in\mathbb T}\|f_T\|_p^p)^{1/p}}.
 \end{gathered}
\end{equation}
The supremum is over the class of curves $\{v: \|v-\gamma\|_{C^L}\le \epsilon_0\}$, all widths in \eqref{c13all:rp-thin-class}, and wave packets with support
\eqref{app:refined-induction-support}. The distinct angular parameters are
$d$-separated, and the boxes at each fixed parameter belong
to a tiling with bounded overlap. We also take the supremum over
$M\geq1$ and unions $Y$ of unit boxes satisfying
\begin{equation}\label{app:refined-induction-overlap}
 \begin{gathered}
 \#\{T\in\mathbb T:10d^{-a}T\cap L_rU\ne\varnothing\}\leq M
            \quad\text{for every constituent unit box }U\subset Y.
 \end{gathered}
\end{equation}
This includes all thin widths $r_0,\ldots,r_{k-2}$.
For the terminal parameter tuple $(J+1;)$, use
\begin{equation}
 d(J+1;)=1,\qquad r=0,\qquad
 0<r_0\leq\cdots\leq r_{J-1}\leq1=r_J=\cdots=r_n.
\end{equation}
Lemma~\ref{app:refined-rescaling-lemma} gives the following two changes
of parameter tuple:
\begin{equation}\label{app:refined-new-tuples}
 \mathcal P'=\begin{cases}
 (k;d_k/h,d_{k+1},\ldots,d_J),&d_k<h<1,\\
 (k+1;d_{k+1},\ldots,d_J),&h=d_k,
 \end{cases}
 \qquad d(\mathcal P')=d/h.
\end{equation}
Indeed, when $h=d_k$, then $d_k/h=1$, so we may take $k+1$ as the first remaining index.

Choose $\kappa>0$ and then $d_*>0$ sufficiently small so that
\begin{equation}\label{app:refined-small-parameters}
 \begin{gathered}
 0<\kappa\ll a\ll\varepsilon,\qquad (4n+10)\kappa<a,\\
 n(1-\kappa)>n-1,\qquad J\kappa<\varepsilon/3,\qquad
 0<d_*<1,\quad d_*^\kappa<\tau<1/2.
 \end{gathered}
\end{equation}
Decrease $d_*$ further as required in
Lemmas~\ref{app:refined-localization-lemma}--\ref{app:refined-transition-lemma}.

Bounded overlap of wave packets at each fixed parameter $s$ gives
\begin{equation}
 \begin{aligned}
\left\|\sum_{\substack{T\in\mathbb T\\s_T=s}}f_T\right\|_p^p
 \lesssim\sum_{\substack{T\in\mathbb T\\s_T=s}}\|f_T\|_p^p.
 \end{aligned}
\end{equation}
There are at most $3d^{-1}$ distinct parameters, so
H\"older's inequality gives
\begin{equation}\label{app:refined-trivial-bound}
 \begin{aligned}
 \left\|\sum_{T\in\mathbb T}f_T\right\|_{L^p(Y)}^p
 &\lesssim d^{-(p-1)}
       \sum_{s\in\{s_T:T\in\mathbb T\}}\left\|\sum_{\substack{T\in\mathbb T\\s_T=s}}f_T\right\|_p^p
 \lesssim d^{-(p-1)}\sum_{T\in\mathbb T}\|f_T\|_p^p.
 \end{aligned}
\end{equation}
Thus
\begin{equation}\label{app:refined-base-case}
 \text{Dec}(\mathcal P)\lesssim d_*^{-1+1/p}\lesssim1
 \qquad\text{if }d(\mathcal P)\geq d_*.
\end{equation}
This includes the terminal parameter tuple $(J+1;)$, for which $d=1$. In the remaining part, we consider $0<d<d_*$.

\subsection*{Step 2. Localization and decoupling}
Fix a parameter tuple $\mathcal P=(k;d_k,\ldots,d_J)$ with $1\leq k\leq J$
and $0<d=d(\mathcal P)<d_*$.
Let $v$, $\{f_T\}_{T\in\mathbb T}$, $Y$, and $M$ satisfy the assumptions
in \eqref{app:refined-dec-definition}, with widths
\eqref{c13all:rp-thin-class}, support
\eqref{app:refined-induction-support}, and overlap
\eqref{app:refined-induction-overlap}.
Use the parameters \eqref{app:refined-small-parameters}.
Suppose $d_k<d^\kappa$, and put
\begin{equation}\label{app:refined-localization-scales}
 h:=d^\kappa,\qquad d':=d^{1-\kappa},\qquad
 \mathcal P':=(k;d_k/h,d_{k+1},\ldots,d_J).
\end{equation}
Let $\mathcal I_h$ partition $[-1,1]$ into intervals of length
$h$, with centers $s_I$. For $I\in\mathcal I_h$, let $A_I$
be the map in Lemma~\ref{app:refined-rescaling-lemma}.
Fix a tiling $\mathcal B$ of the original physical space by cubes $B$ of side length
$C_1d^{-a}\max_{0\leq j\leq n}r_j^{-1}$, with $C_1$ fixed and
sufficiently large. The derivative frame and its inverse are uniformly
bounded, so
\begin{equation}
 \operatorname{diam}(T)\lesssim\max_{0\leq j\leq n}r_j^{-1},
 \qquad \operatorname{diam}(10d^{-a}T)\leq\operatorname{diam}(B).
\end{equation}
Thus we can assign each box $T$ to exactly one cube $B$ such that
$10d^{-a}T\subset3B$. Fix these assignments and set
\begin{equation}
 \begin{aligned}
 \mathbb T_{I,B}&:=\{T\in\mathbb T:s_T\in I,\ T\text{ is assigned to }B\},\\
 F_{I,B}&:=\sum_{T\in\mathbb T_{I,B}}f_T,
 \qquad I\in\mathcal I_h.
 \end{aligned}
\end{equation}

We set up some notation for Lemma~\ref{app:refined-localization-lemma} whose proof is analogous
to that of Guth--Iosevich--Ou--Wang~\cite[proof of Theorem~4.2]{GIOW2020}.
In particular, the dyadic choices of $M'$ and $M''$ below play similar
roles to the corresponding parameters there.

Put $L:=h^{-2n}$, $L':=(d')^{-2n\kappa}$ and
$b_a:=5+5\cdot2^{-a}$. Let $\mathcal Q_{L'}$ be a tiling of the
rescaled space by cubes of side $L'$. For a dyadic number $M'\geq1$,
define
\begin{equation}\label{app:refined-first-multiplicity}
 E_{I,B}:=\bigcup_{\substack{q\in\mathcal Q_{L'}:\ M'\leq
 \#\{T\in\mathbb T_{I,B}:\ b_a d^{-a}T\cap A_I^T(20q)\ne\varnothing\}<2M'}}q,
\end{equation}
Let $Y_{I,B}$ be the union of the unit grid cubes meeting $4q$
for some selected cube $q\subset E_{I,B}$.

For this fixed choice of $M'$, let $\mathcal Q_L$ be a tiling of the
original space by cubes of side $L$. For $Q\in\mathcal Q_L$ and a
fixed sufficiently large $C_3$, define
\begin{equation}\label{app:refined-second-multiplicity}
 \begin{gathered}
 \mathcal B_I(Q):=\{B\in\mathcal B:A_I^TE_{I,B}\cap C_3Q\ne\varnothing\},\qquad I\in\mathcal I_h,\\
 M''(Q):=\#\{I\in\mathcal I_h:\mathcal B_I(Q)\ne\varnothing\}.
 \end{gathered}
\end{equation}
Since $A_I^TE_{I,B}\subset CB$ and $L\ll\operatorname{diam}(B)$,
after a bounded partition of $\mathcal B$, which we suppress, we may assume
$\#\mathcal B_I(Q)\in\{0,1\}$.
Thus $M''(Q)$ counts the intervals $I$ for which $A_I^TE_{I,B}$ meets
$C_3Q$ for some $B$.
We select the cubes $Q$ with
$Q\cap Y\ne\varnothing$ and $M''\leq M''(Q)<2M''$ for a dyadic
number $M''\geq1$.

\begin{lemma}\label{app:refined-localization-lemma}
With the notation and assumptions above, for every $\eta>0$ one can
choose dyadic numbers $M'\geq1$ and then $M''\geq1$ so that
\begin{equation}\label{c13all:rp-recurrence}
 \begin{aligned}
 \left\|\sum_{T\in\mathbb T}f_T\right\|_{L^p(Y)}
 &\lesssim (\log(d^{-1}))^{C_n}h^{-\eta}(M'')^{\frac12-\frac1p}
 \left(\sum_{I\in\mathcal I_h}\sum_{B\in\mathcal B}
       \|F_{I,B}\|_{L^p(A_I^TY_{I,B})}^p\right)^{1/p}\\
 &\quad+d\left(\sum_{T\in\mathbb T}\|f_T\|_p^p\right)^{1/p}.
 \end{aligned}
\end{equation}
\end{lemma}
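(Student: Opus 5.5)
The plan follows the two-level pigeonholing of Guth--Iosevich--Ou--Wang. The fine level uses cubes of side $L'$ in the rescaled space, recording the multiplicity $M'$ within a single angular cap $I$ and a single localization cube $B$. The coarse level uses cubes $Q$ of side $L=h^{-2n}$ in the original space, recording the number $M''$ of caps contributing on $Q$.

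\textbf{Step 1 (pigeonholing $M'$).} Write $\sum_T f_T=\sum_{I,B}F_{I,B}$. For each $(I,B)$, sort the cubes $q\in\mathcal Q_{L'}$ dyadically according to the count in \eqref{app:refined-first-multiplicity}. There are $O(\log d^{-1})$ dyadic values of $M'$ (multiplicities are at most polynomial in $d^{-1}$), so one value of $M'$ captures, after losing $(\log d^{-1})^{C}$, a fraction of $\|\sum_T f_T\|_{L^p(Y)}$. Points of $Y$ lying outside $\bigcup_{I,B}A_I^T(4q)$ for the selected $q$ are controlled by the wave-packet tails. There, $x$ is at distance $\gtrsim d^{-a}$ (in $T$-adapted units) from every relevant $T$. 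The bound $|f_T|\lesssim|T|^{-1/p}\|f_T\|_pW_T$ with $W_T$ decaying to order $2M$ then gives a contribution $\lesssim d^{aM'}\le d$ times $(\sum_T\|f_T\|_p^p)^{1/p}$, after summing the at most polynomially many $T$ by Hölder. This produces the error term $d(\sum_T\|f_T\|_p^p)^{1/p}$. The same device discards the part of $F_{I,B}$ outside $A_I^TY_{I,B}$.

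\textbf{Step 2 (pigeonholing $M''$ and decoupling).} Next, pigeonhole the cubes $Q\in\mathcal Q_L$ meeting $Y$ by the value of $M''(Q)$, losing another logarithm. On a fixed $Q$, only $\lesssim M''$ caps $I$ contribute, and for each such $I$ there is a single $B$. Decouple on a weighted neighbourhood of $Q$ using \eqref{c13all:ordinary-flat-decoupling} at scale $h$ and index $k$. The support hypothesis \eqref{app:refined-coarse-support} follows from \eqref{app:refined-induction-support}. Indeed, since $d_k<h$, the widths satisfy $r_j\lesssim h^{j+1-k}$ for $j\ge k$. Moreover $d_k<d^\kappa<d_*^\kappa<\tau$ forces $\lambda_{k-1}\sim1$, and the frame change of Lemma~\ref{prelim:change-of-frame} passes from $s_T$ to $s_I$. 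Localization to $Q$ is legitimate because $L=h^{-2n}$ exceeds the uncertainty scale $h^{-n}$ of the caps, with Schwartz tails again absorbed into the error term. Decoupling and then Hölder over the $\lesssim M''$ active caps gives
\[
\Big\|\sum_I F_{I,B(I)}\Big\|_{L^p(Q)}\lesssim h^{-\eta}(M'')^{1/2-1/p}\Big(\sum_I\|F_{I,B(I)}\|_{L^p(w_Q)}^p\Big)^{1/p}.
\]
Summing the $p$-th powers over $Q$, and noting that $A_I^TE_{I,B}$ meets $C_3Q$ and that $Y_{I,B}$ contains the $4q$-neighbourhoods, we can replace $w_Q$ by the restriction to $A_I^TY_{I,B}$ up to tails. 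Summing over $Q$ then yields the main term of \eqref{c13all:rp-recurrence}.

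\textbf{Main obstacle.} I expect the hardest step to be the bookkeeping of enlargements and tails. One must check that the cutoffs $b_ad^{-a}$, $20q$, $4q$ and $C_3Q$ nest so that every point of $Y\cap Q$ where the $M'$-selected pieces matter lies in $A_I^TY_{I,B}$. One must also check that the neglected regions lie at distance $\ge d^{-a}$ in the relevant $T$-metric, using $L'\ll d^{-a}$ from $(4n+10)\kappa<a$. Finally, one must check that the tail sums are genuinely $O(d)$ uniformly in the number of packets. I would follow GIOW's proof line by line, replacing their $R^{\delta}$-neighbourhoods by $d^{-a}$-dilates of $T$.
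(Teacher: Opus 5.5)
There is a genuine gap: the proposal never says how $F_{I,B}$ is restricted to the $M'$-selected cubes, and the decoupling step is applied to the wrong functions. Your Step~2 inequality decouples the full functions $F_{I,B(I)}$ over the $\lesssim M''$ active caps. After fixing $M'$, however, the function you must estimate on $Q$ is not $\sum_{I\ \text{active}}F_{I,B(I)}$. For a non-active cap, $F_{I,B}$ is usually not small on $Q$; it only has a different multiplicity class there.

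The same confusion appears in Step~1. A point of $Y$ outside the selected $A_I^T(4q)$ generally lies inside many $T\in\mathbb T_{I,B}$, because it sits in a cube of another dyadic class. Wave-packet decay therefore disposes only of the zero-multiplicity cubes, not of the whole complement of the selected ones.

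Sharp cutoffs $\mathbf 1_{A_I^TE_{I,B}}$ with a Guth--Iosevich--Ou--Wang-type nesting argument do not rescue this. The thinnest side of $A_I^Tq$ is $\sim h^{k-1}L'\leq L'<L=h^{-2n}$. So these cutoffs are not constant on the decoupling cubes $Q$, and they destroy the Fourier support. For the same reason, your last step, replacing $w_Q$ by the restriction to $A_I^TY_{I,B}$, fails for un-localized $F_{I,B}$.

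The missing idea is a localization compatible with \eqref{app:refined-coarse-support}, and the paper does this as follows.
\begin{itemize}
\item Take $\eta_q=\mathbf 1_q*\omega_{\sqrt{L'}}$, with $\operatorname{supp}\widehat{\eta_q}\subset B(0,C/\sqrt{L'})$ and $\sum_q\eta_q=1$, and set $\eta_{I,B}(x)=(\mathbf 1_{E_{I,B}}*\omega_{\sqrt{L'}})(A_I^{-T}x)$.
\item Check that multiplying by such a cutoff changes the coefficients $\mu_j$ in the frame at $s_I$ by at most $Ch^{j+1-k}(L')^{-1/2}$. This is $o(h^{j+1-k})$ for $j\geq k-1$, and $o(1)$ for $j<k-1$ because $n(1-\kappa)>n-1$.
\item $F_{I,B}$ now splits exactly over the dyadic classes, including the zero class. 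Pigeonhole $M'$ among the pieces $\eta_{I,B}F_{I,B}$, then pigeonhole $Q$ by $M''(Q)$. Caps with $A_I^TE_{I,B}\cap C_3Q=\varnothing$ contribute only cutoff tails on $Q$.
\item Decouple $\sum_I\chi_Q\eta_{I,B}F_{I,B}$ on $Q$.
\item Use $0\leq\eta_{I,B}\leq1$, $\sum_QW_Q\lesssim1$, and the decay $\eta_{I,B}(A_I^Ty)\lesssim(d')^{n\kappa A}$ for $y\notin Y_{I,B}$ to arrive at $\|F_{I,B}\|_{L^p(A_I^TY_{I,B})}$. The decay holds because the distance is at least $L'$ while the smoothing scale is $\sqrt{L'}$.
\end{itemize}
The error term $d(\sum_T\|f_T\|_p^p)^{1/p}$ then collects these cutoff tails together with the zero-multiplicity class. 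Only the zero-multiplicity part uses wave-packet decay.
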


\begin{proof}
We remark that \eqref{c13all:rp-recurrence} is analogous to the estimate ``$\|f\|_{L^p(Y)}^p\lessapprox (M'')^{\frac{p}{2}-1}\sum_{\square \in \mathbb B} \|f_\square\|_{L^p(Y_\square)}^p$" in \cite[Proof of Theorem 4.2]{GIOW2020}.

We first check the support condition under new coordinates. Take $s_T\in I$ and write
\begin{equation}\label{app:refined-center-coordinates}
 \xi=\sum_{i=0}^n\lambda_i v^{(i)}(s_T)
     =\sum_{j=0}^n\mu_jv^{(j)}(s_I).
\end{equation}
Using $r_i\leq1$ for $0\leq i<k$ and
$r_i\leq d_k^{i+1-k}\leq h^{i+1-k}$ for $k\leq i\leq n$,
Lemma~\ref{prelim:change-of-frame} gives
\begin{equation}
 \begin{gathered}
 |\mu_j|\lesssim\sum_{i=0}^j r_i h^{j-i}
              +h\sum_{i=j+1}^n r_i
 \lesssim
 \begin{cases}
  1,&0\leq j<k,\\
  h^{j+1-k},&k\leq j\leq n,
 \end{cases}\\
 |\mu_{k-1}-\lambda_{k-1}|\lesssim h.
 \end{gathered}
\end{equation}
The assumptions $d_k<d^\kappa$ and $d<d_*$, together with
\eqref{app:refined-localization-scales} and \eqref{app:refined-small-parameters},
give $d_k<h=d^\kappa<d_*^\kappa<\tau$.
Thus \eqref{app:refined-induction-support} gives $\lambda_{k-1}\gtrsim1$. Hence
\begin{equation}
 \mu_{k-1}\geq\lambda_{k-1}-Ch\gtrsim1,
\end{equation}
after decreasing $d_*$. Thus \eqref{app:refined-coarse-support}
holds, and its exponent range contains $p$ since
\begin{equation}
 p\leq(n+1-J)(n+2-J)\leq(n+1-k)(n+2-k).
\end{equation}

The assignments give
\begin{equation}\label{app:refined-family-partition}
 \begin{gathered}
 \mathbb T=\bigsqcup_{I\in\mathcal I_h}\bigsqcup_{B\in\mathcal B}\mathbb T_{I,B},
 \qquad \sum_{T\in\mathbb T}f_T=\sum_{I\in\mathcal I_h}\sum_{B\in\mathcal B} F_{I,B},\\
 \sum_{I\in\mathcal I_h}\sum_{B\in\mathcal B}\sum_{T\in\mathbb T_{I,B}}\|f_T\|_p^p
 =\sum_{T\in\mathbb T}\|f_T\|_p^p.
 \end{gathered}
\end{equation}
Since the cubes $B$ have a common side length and
$L=d^{-2n\kappa}\ll d^{-a}\leq\operatorname{diam}(B)$,
each cube of side $C_3L$ meets only $O(1)$ fixed enlargements $CB$.

The cubes in $\mathcal Q_{L'}$ have side $L'$. We smooth their indicators
at the smaller scale $\sqrt{L'}=(d')^{-n\kappa}$.
For a nonnegative Schwartz function $\omega$ with integral one and Fourier
support in a sufficiently small ball, set
\begin{equation}\label{app:refined-smooth-cutoffs}
 \omega_{\sqrt{L'}}(y):=(L')^{-(n+1)/2}\omega(y/\sqrt{L'}),\qquad
 \eta_q:=\mathbf1_q*\omega_{\sqrt{L'}},\quad q\in\mathcal Q_{L'}.
\end{equation}
By \eqref{app:refined-smooth-cutoffs}, we have
\begin{equation}\label{app:refined-cutoff-support}
 0\leq\eta_q\leq1,\qquad
 \sum_{q\in\mathcal Q_{L'}}\eta_q=1,\qquad
 \operatorname{supp}\mathcal F
       \left(\eta_q\right)\subset B(0,C/\sqrt{L'}),
\end{equation}
for every $q\in\mathcal Q_{L'}$.
By the definition of $A_I$,
$\|A_I^T\|=\|A_I\|\lesssim h^{-(n+1-k)}$, so
\begin{equation}\label{c13all:rp-cube-sizes}
 \operatorname{diam}(A_I^Tq)
 \lesssim h^{-(n+1-k)}L'
 =d^{-\kappa(n+1-k)-2n\kappa(1-\kappa)}
 \lesssim d^{-3n\kappa}=o(d^{-a}).
\end{equation}
Recall from \eqref{app:refined-center-coordinates} that
$\xi=\sum_{j=0}^n\mu_jv^{(j)}(s_I)$: the coefficients $\mu_j$ are taken
in the derivative frame at the center $s_I$ of $I$.
For $\mathcal Q\subset\mathcal Q_{L'}$, multiplication by
$\sum_{q\in\mathcal Q}\eta_q(A_I^{-T}x)$ changes $\mu_j$ by at most
\begin{equation}
 C h^{j+1-k}(L')^{-1/2}
 =C h^{j+1-k}(d')^{n\kappa}\quad(0\leq j\leq n).
\end{equation}
For $k-1\leq j\leq n$ this is $o(h^{j+1-k})$, while for $0\leq j<k-1$ it is
\begin{equation}
 C d^{\kappa[n(1-\kappa)-(k-1-j)]}=o(1),
 \qquad n(1-\kappa)>n-1\geq k-1-j.
\end{equation}
These products satisfy \eqref{app:refined-coarse-support}.
Induction will be applied only to the original $F_{I,B}$ on
$A_I^TY_{I,B}$, not to these products.

The errors from the tails are controlled by the error term in
\eqref{c13all:rp-recurrence}, so we omit them below.

By \eqref{c13all:rp-cube-sizes} and bounded overlap at each angular
parameter,
\begin{equation}
 \#\{T\in\mathbb T_{I,B}:
        b_a d^{-a}T\cap A_I^T(20q)\ne\varnothing\}
 \lesssim d^{-1}(1+d^{-a})^{n+1}
 \lesssim d^{-1-(n+1)a}.
\end{equation}
Pigeonholing among $O(\log(d^{-1}))$ positive dyadic values of $M'$,
take $E_{I,B}$ from \eqref{app:refined-first-multiplicity} and set
\begin{equation}
 \eta_{I,B}(x):=(\mathbf1_{E_{I,B}}*\omega_{\sqrt{L'}})(A_I^{-T}x).
\end{equation}
Since $E_{I,B}$ is a union of cubes $q\in\mathcal Q_{L'}$, the Fourier-support
bound in \eqref{app:refined-cutoff-support} applies to its selected subcollection.
Since $\sqrt{L'}\ll L'$, we have, for every $A>0$ and $y\notin Y_{I,B}$,
\begin{equation}\label{app:refined-cutoff-tail}
 \begin{aligned}
 0\leq\eta_{I,B}(A_I^Ty)
 &\leq\int_{|z|\geq L'}\omega_{\sqrt{L'}}(z)\,\mathrm{d}z
 \lesssim_A(L')^{-A/2}
 =(d')^{n\kappa A}.
 \end{aligned}
\end{equation}
Indeed, $y\notin Y_{I,B}$ implies $y\notin4q$ for every selected cube $q$,
so $|y-z|\geq L'$ for all $z\in E_{I,B}$.

For this choice of $M'$, \eqref{c13all:rp-cube-sizes} and the assignment
$10d^{-a}T\subset3B$ imply
\begin{equation}
 A_I^TE_{I,B}\subset CB,\qquad
 \#\mathcal B_I(Q)\leq1,\qquad
 M''(Q)\leq\#\mathcal I_h\lesssim h^{-1}\leq d^{-1}.
\end{equation}
On each $Q$, retain for each $I$ the unique $B\in\mathcal B_I(Q)$ when it exists.
Pigeonholing the cubes $Q$ meeting $Y$ by $M''(Q)$ selects $M''\geq1$ such that
\begin{equation}\label{app:refined-pigeonhole}
 \begin{aligned}
 \left\|\sum_{T\in\mathbb T}f_T\right\|_{L^p(Y)}^p
 &\lesssim(\log(d^{-1}))^{C_n}
 \sum_{\substack{Q\in\mathcal Q_L:\ Q\cap Y\ne\varnothing\\
                         M''\leq M''(Q)<2M''}}
 \left\|\sum_{I\in\mathcal I_h}
          \sum_{B\in\mathcal B_I(Q)}\eta_{I,B}F_{I,B}\right\|_{L^p(Q)}^p.
 \end{aligned}
\end{equation}

For a selected cube $Q$, with $W_Q$ from Section~\ref{sec:notations},
choose $\chi_Q$ such that
\begin{equation}
 |\chi_Q|\geq1\text{ on }Q,\qquad |\chi_Q|^p\lesssim W_Q,\qquad
 \operatorname{supp}\widehat\chi_Q\subset B(0,c/L),
\end{equation}
where $c>0$ is sufficiently small.

Put
\begin{equation}
 S_I:=\sum_{B\in\mathcal B_I(Q)}\eta_{I,B}F_{I,B},\qquad
 \#\{I\in\mathcal I_h:S_I\ne0\}\leq M''(Q)<2M''.
\end{equation}
Multiplication by $\chi_Q$ changes every derivative coefficient by
$O(L^{-1})=O(h^{2n})$. Since
$h^{2n}=o(h^{n+1-k})$, the functions $\chi_QS_I$ still satisfy
\eqref{app:refined-coarse-support}. Ordinary decoupling and H\"older
give
\begin{equation}\label{c13all:rp-local}
 \begin{aligned}
 \left\|\sum_{I\in\mathcal I_h} S_I\right\|_{L^p(Q)}
 &\leq\left\|\sum_{I\in\mathcal I_h}\chi_QS_I\right\|_p
 \lesssim h^{-\eta}\left(\sum_{I\in\mathcal I_h}\|\chi_QS_I\|_p^2\right)^{1/2}\\
 &\leq h^{-\eta}(2M'')^{1/2-1/p}
       \left(\sum_{I\in\mathcal I_h}\|\chi_QS_I\|_p^p\right)^{1/p}\\
 &\lesssim h^{-\eta}(M'')^{1/2-1/p}
       \left(\sum_{I\in\mathcal I_h}\sum_{B\in\mathcal B_I(Q)}
                  \|\eta_{I,B}F_{I,B}\|_{L^p(W_Q)}^p\right)^{1/p}.
 \end{aligned}
\end{equation}
Each inner sum over $B\in\mathcal B_I(Q)$ has at most one term.

Using $0\leq\eta_{I,B}\leq1$, $\sum_{Q\in\mathcal Q_L}W_Q\lesssim1$, and
\eqref{app:refined-cutoff-tail} to localize to $A_I^TY_{I,B}$, we obtain
\begin{equation}
 \begin{aligned}
 &\sum_{Q\in\mathcal Q_L}\sum_{I\in\mathcal I_h}\sum_{B\in\mathcal B_I(Q)}
              \|\eta_{I,B}F_{I,B}\|_{L^p(W_Q)}^p\\
 &\quad\lesssim\sum_{I\in\mathcal I_h}\sum_{B\in\mathcal B}\int|\eta_{I,B}(x)F_{I,B}(x)|^p\,\mathrm{d}x\lesssim\sum_{I\in\mathcal I_h}\sum_{B\in\mathcal B}\|F_{I,B}\|_{L^p(A_I^TY_{I,B})}^p.
 \end{aligned}
\end{equation}
Insert this bound and \eqref{c13all:rp-local} into
\eqref{app:refined-pigeonhole}, and take the $p$th root to obtain
\eqref{c13all:rp-recurrence}.
\end{proof}

\subsection*{Step 3. The two overlap bounds}
\begin{lemma}[The overlap bounds]\label{app:refined-overlap-lemma}
Use the assignments from Step~2 and the dyadic numbers supplied by
Lemma~\ref{app:refined-localization-lemma}, with the sets defined in
\eqref{app:refined-first-multiplicity}--\eqref{app:refined-second-multiplicity}.
Recall that $d'=d/h$, $r=J-k+1$, and $L_r$ is defined in
\eqref{app:refined-enlargements}; $M$ is the original overlap bound
in \eqref{app:refined-induction-overlap}.
For a nonempty selected class in \eqref{app:refined-pigeonhole}, one has
\begin{equation}
 M'M''\leq M.
\end{equation}
Moreover, for every $I\in\mathcal I_h$ and cube $B$, each constituent
unit cube $U'$ of $Y_{I,B}$ satisfies
\begin{equation}
 \#\{T\in\mathbb T_{I,B}:
 10(d')^{-a}A_I^{-T}T\cap L_rU'\ne\varnothing\}<2M'.
\end{equation}
\end{lemma}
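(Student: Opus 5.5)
The plan is to prove the two claims separately, following the corresponding step in Guth--Iosevich--Ou--Wang. Both reduce to comparing the enlarged boxes $b_ad^{-a}T$ (original space) with the rescaled boxes $10(d')^{-a}A_I^{-T}T$, and then counting.

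\emph{Second claim (rescaled overlap).} Let $U'$ be a constituent unit cube of $Y_{I,B}$. By definition, $U'$ meets $4q$ for some selected cube $q\subset E_{I,B}$, so $L_rU'\subset 20q$ once $L'\gg L_r$. This holds because $L'=(d')^{-2n\kappa}$ is large, since $d'\le d<d_*$ and $d_*$ is small. It then suffices to check the containment
\[
 A_I^T\bigl(10(d')^{-a}A_I^{-T}T\bigr)=10(d')^{-a}T\subset b_ad^{-a}T .
\]
This is immediate: $d'=d^{1-\kappa}\ge d$, so $(d')^{-a}\le d^{-a}$, and $b_a\ge 10$. Hence any $T\in\mathbb T_{I,B}$ with $10(d')^{-a}A_I^{-T}T\cap L_rU'\neq\varnothing$ satisfies $b_ad^{-a}T\cap A_I^T(20q)\neq\varnothing$. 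By the choice of $q$ in \eqref{app:refined-first-multiplicity}, there are fewer than $2M'$ such $T$. The only subtlety is that the rescaled box is expressed in the new derivative frame of $v_I$. Lemma~\ref{app:refined-rescaling-lemma} shows that $A_I^{-T}T$ is comparable to the box built from $v_I^{(j)}(u_T)$ and the widths $r_j'$, so a harmless constant can be absorbed into $b_a$.

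\emph{First claim ($M'M''\le M$).} Take a selected cube $Q\in\mathcal Q_L$ that meets $Y$, and pick a unit cube $U\subset Y$ with $U\cap Q\neq\varnothing$. There are at least $M''$ intervals $I$ for which some $B$ has $A_I^TE_{I,B}\cap C_3Q\neq\varnothing$. For each such $I$, choose a cube $q\subset E_{I,B}$ with $A_I^Tq\cap C_3Q\neq\varnothing$. By the definition of $E_{I,B}$, at least $M'$ packets $T\in\mathbb T_{I,B}$ satisfy $b_ad^{-a}T\cap A_I^T(20q)\neq\varnothing$. The families $\mathbb T_{I,B}$ are disjoint across distinct $I$, so these give at least $M'M''$ distinct packets. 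It remains to show that each such $T$ satisfies $10d^{-a}T\cap L_rU\neq\varnothing$; the hypothesis \eqref{app:refined-induction-overlap} then yields $M'M''\le M$.

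\emph{Main obstacle.} The key step is this last containment, namely that $b_ad^{-a}T$ meeting $A_I^T(20q)$ forces $10d^{-a}T$ to meet $L_rU$. By \eqref{c13all:rp-cube-sizes}, $A_I^T(20q)$ has diameter $\lesssim d^{-3n\kappa}$, and $C_3Q$ has side $C_3L=C_3d^{-2n\kappa}$. So the point of $b_ad^{-a}T$ lies within distance $O(d^{-3n\kappa})$ of $U$. Every side of $T$ is at least $1$, so enlarging $b_ad^{-a}T$ by a factor $1+O(d^{a-3n\kappa})\cdot$ covers this gap, where $d^{a-3n\kappa}\ll1$ by \eqref{app:refined-small-parameters}. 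Since $b_a=5+5\cdot 2^{-a}<10$, the factor $10/b_a>1$ gives exactly the room needed. One should not rely on $L_r$ for this, because the gap $d^{-3n\kappa}$ far exceeds $L_r$; the slack in $10d^{-a}$ versus $b_ad^{-a}$ is what absorbs it. Choosing $d_*$ small enough that $d^{a-3n\kappa}(10-b_a)^{-1}$ is tiny completes the argument.
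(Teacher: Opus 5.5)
Your proof of $M'M''\le M$ is correct and is essentially the paper's argument. You pick a constituent cube $U\subset Y$ meeting $Q$ and use that the $\mathbb T_{I,B}$ are disjoint across $I$. You then absorb the $O(d^{-3n\kappa})$ distance into the slack $10-b_a=5(1-2^{-a})>0$, using $3n\kappa<a$ and $d<d_*$.

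\textbf{The gap is in the second claim.} You justify $10(d')^{-a}T\subset b_a d^{-a}T$ by saying ``$(d')^{-a}\le d^{-a}$ and $b_a\ge 10$''. But $b_a=5+5\cdot 2^{-a}<10$, as you yourself use in the first claim. So the bound $(d')^{-a}\le d^{-a}$ only gives $10(d')^{-a}T\subset 10d^{-a}T$, and that is not contained in $b_ad^{-a}T$. As written, this step fails.

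The missing input is the quantitative gain from the rescaling. Since $d'=d/h$, we have $(d')^{-a}=h^a d^{-a}$, and $h=d^\kappa<d_*^\kappa<\tau<1/2$. Hence
\[
 10(d')^{-a}=10h^a d^{-a}\le 10\cdot 2^{-a}d^{-a}<b_a d^{-a},
\]
because $b_a-10\cdot 2^{-a}=5(1-2^{-a})>0$. This is why $b_a$ is placed strictly between $10\cdot 2^{-a}$ and $10$. The gap $10-b_a$ is the room used in the first claim, and the gap $b_a-10\cdot2^{-a}$ is the room used in the second. Both gaps are only of size $O(a)$.

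For the same reason, your remark that a comparability constant ``can be absorbed into $b_a$'' is not the right mechanism. Fortunately, no constant arises. By the chain rule, $A_I v^{(j)}(s_T)=h^{k-1-j}v_I^{(j)}(u_T)$, so $A_I^{-T}T$ is exactly
\[
 \{y:|(y-A_I^{-T}x_T)\cdot v_I^{(j)}(u_T)|\le (r_j')^{-1},\ 0\le j\le n\}.
\]
Dilation about the center also commutes with $A_I^{-T}$.

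A minor slip: you write $d'\le d<d_*$, but in fact $d'=d^{1-\kappa}\ge d$. What you need is $d'\le d_*^{1-\kappa}$. This still makes $L'=(d')^{-2n\kappa}\ge L_r+1$ after decreasing $d_*$, so $L_rU'\subset 20q$ holds.
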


\begin{proof}
Choose a selected cube $Q$ in \eqref{app:refined-pigeonhole} and a
constituent unit cube $U_Q\subset Y$ meeting $Q$.
For each $I$ counted by $M''(Q)$, let $B$ be the unique element of
$\mathcal B_I(Q)$. Choose a selected cube $q_{I,B}\subset E_{I,B}$ satisfying
$A_I^Tq_{I,B}\cap C_3Q\ne\varnothing$.
The definition of $M'$ gives
\begin{equation}
 \#\{T\in\mathbb T_{I,B}:
 b_a d^{-a}T\cap A_I^T(20q_{I,B})\ne\varnothing\}\geq M'.
\end{equation}
For $z$ in the displayed intersection and $y\in U_Q$,
\begin{equation}
 |y-z|\lesssim L+\operatorname{diam}(A_I^Tq_{I,B})+1
 \lesssim d^{-3n\kappa}
\end{equation}
by \eqref{c13all:rp-cube-sizes}. Since
$10-b_a=5(1-2^{-a})>0$, $3n\kappa<a$, and $r_j\leq1$, we may choose
$d_*$ so that
\begin{equation}
 \begin{aligned}
 |(y-x_T)\cdot v^{(j)}(s_T)|
 &\leq |(z-x_T)\cdot v^{(j)}(s_T)|
                  +|y-z|\,|v^{(j)}(s_T)|\\
 &\leq b_a d^{-a}r_j^{-1}+C d^{-3n\kappa}
 \leq10d^{-a}r_j^{-1}
 \qquad(0\leq j\leq n).
 \end{aligned}
\end{equation}
Thus $U_Q\subset10d^{-a}T$. Disjointness in
\eqref{app:refined-family-partition} gives
\begin{equation}\label{c13all:rp-incidence}
 \begin{aligned}
 M'M''
 &\leq M'M''(Q)=M'\sum_{I\in\mathcal I_h}\#\mathcal B_I(Q)\\
 &\leq\sum_{I\in\mathcal I_h}\sum_{B\in\mathcal B_I(Q)}
 \#\{T\in\mathbb T_{I,B}:
 b_a d^{-a}T\cap A_I^T(20q_{I,B})\ne\varnothing\}\\
 &\leq\#\{T\in\mathbb T:10d^{-a}T\cap U_Q\ne\varnothing\}\\
 &\leq\#\{T\in\mathbb T:10d^{-a}T\cap L_rU_Q\ne\varnothing\}
 \leq M.
 \end{aligned}
\end{equation}

For a constituent unit cube $U'$ of $Y_{I,B}$, choose a selected cube
$q\subset E_{I,B}$ with $U'\cap4q\ne\varnothing$.
Choosing $d_*$ so that $L'\geq L_r+1$ gives
\begin{equation}
 L_rU'\subset20q.
\end{equation}
Moreover, $h<\tau<1/2$ and $d'=d/h$, so
\begin{equation}
 10(d')^{-a}=10h^a d^{-a}
 \leq10\cdot2^{-a}d^{-a}<b_a d^{-a}.
\end{equation}
Applying $A_I^T$ to the rescaled intersections therefore gives
\begin{equation}
 \begin{aligned}
 &\#\{T\in\mathbb T_{I,B}:
       10(d')^{-a}A_I^{-T}T\cap L_rU'\ne\varnothing\}\\
 &\quad\leq
 \#\{T\in\mathbb T_{I,B}:
       b_a d^{-a}T\cap A_I^T(20q)\ne\varnothing\}
 <2M'.
 \end{aligned}
\end{equation}
The last inequality is \eqref{app:refined-first-multiplicity}.
\end{proof}

\subsection*{Step 4. The smaller-scale induction estimate}
\begin{lemma}[The smaller-scale estimate]\label{app:refined-recursion-lemma}
Let the parameter tuple $\mathcal P=(k;d_k,\ldots,d_J)$ and its angular
scale $d=d(\mathcal P)$ be as in \eqref{app:refined-dec-definition},
with $1\leq k\leq J$, $0<d<d_*$, and the parameters
\eqref{app:refined-small-parameters}. If $d_k<d^\kappa$, put
\begin{equation}
 h:=d^\kappa,\qquad
 \mathcal P':=(k;d_k/h,d_{k+1},\ldots,d_J).
\end{equation}
For every $\eta>0$,
\begin{equation}\label{exp:refined-constant-recursion}
 \begin{gathered}
 \text{Dec}(\mathcal P)+1
 \lesssim (\log(d^{-1}))^{C_n}h^{-\eta}
                  (\text{Dec}(\mathcal P')+1),\\
 d(\mathcal P')=d^{1-\kappa}.
 \end{gathered}
\end{equation}
\end{lemma}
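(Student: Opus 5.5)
We prove the estimate by combining Lemma~\ref{app:refined-localization-lemma} with Lemma~\ref{app:refined-overlap-lemma} and the rescaling Lemma~\ref{app:refined-rescaling-lemma}. Fix admissible data $v$, $\{f_T\}$, $Y$, $M$ for $\mathcal P$. If $d(\mathcal P)\ge d_*$ there is nothing to prove, so assume $d<d_*$ and $d_k<d^\kappa$, and let $h=d^\kappa$.

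First, apply Lemma~\ref{app:refined-localization-lemma}. This gives dyadic $M'$ and $M''$ and the bound \eqref{c13all:rp-recurrence}. The error term $d(\sum_T\|f_T\|_p^p)^{1/p}$ is at most $M^{1/2-1/p}(\sum_T\|f_T\|_p^p)^{1/p}$, since $M\ge1$. It is absorbed by the ``$+1$'' on the right-hand side of \eqref{exp:refined-constant-recursion}.

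For the main term, fix $I\in\mathcal I_h$ and $B\in\mathcal B$. Change variables $x=A_I^Ty$. By Lemma~\ref{app:refined-rescaling-lemma}, the rescaled family $\{f_T(A_I^T\cdot):T\in\mathbb T_{I,B}\}$ satisfies the following:
\begin{itemize}
\item its curve is $v_I$, with $\|v_I-\gamma\|_{C^L}\le\epsilon_0$ by \eqref{app:refined-curve-bound};
\item its widths are $r_j'$, and its angular parameters are $d'$-separated;
\item its supports obey \eqref{app:refined-induction-support} for the tuple $\mathcal P'$.
\end{itemize}
The boxes $A_I^{-T}T$ are the physical boxes attached to the new widths. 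The second part of Lemma~\ref{app:refined-overlap-lemma} says that every unit cube $U'$ of $Y_{I,B}$ meets fewer than $2M'$ of the sets $10(d')^{-a}A_I^{-T}T$ enlarged by $L_r$. Here $\mathcal P'$ has the same number $r$ of remaining parameters, so the required enlargement is still $L_r$ and the overlap hypothesis \eqref{app:refined-induction-overlap} holds with $2M'$ in place of $M$. By the definition of $\mathrm{Dec}(\mathcal P')$, and since the Jacobian factors cancel on both sides,
\[
\|F_{I,B}\|_{L^p(A_I^TY_{I,B})}\lesssim \mathrm{Dec}(\mathcal P')\,(M')^{1/2-1/p}\Big(\sum_{T\in\mathbb T_{I,B}}\|f_T\|_p^p\Big)^{1/p}.
\]

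Next, sum the $p$-th powers over $I$ and $B$. By the disjoint partition \eqref{app:refined-family-partition}, the right-hand side becomes $\sum_T\|f_T\|_p^p$. Inserting this into \eqref{c13all:rp-recurrence} produces the factor $(M'M'')^{1/2-1/p}$. The first part of Lemma~\ref{app:refined-overlap-lemma} gives $M'M''\le M$, so this factor is at most $M^{1/2-1/p}$.

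Taking the supremum over all data yields
\[
\mathrm{Dec}(\mathcal P)\lesssim(\log d^{-1})^{C_n}h^{-\eta}\,\mathrm{Dec}(\mathcal P')+1,
\]
which implies \eqref{exp:refined-constant-recursion}. The identity $d(\mathcal P')=d/h=d^{1-\kappa}$ is \eqref{app:refined-new-tuples}.

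The step I expect to need the most care is checking that the rescaled families are admissible for $\mathrm{Dec}(\mathcal P')$. This has three parts:
\begin{itemize}
\item matching the overlap condition with the constant $L_r$ and the scale $(d')^{-a}$;
\item confirming that the localized set $A_I^TY_{I,B}$ is a union of unit cubes in the rescaled coordinates;
\item verifying that the tail contributions discarded in Lemma~\ref{app:refined-localization-lemma} are indeed controlled by the error term.
\end{itemize}
Each of these is supplied by the two preceding lemmas, so the argument reduces to bookkeeping.
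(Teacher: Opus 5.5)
Your proposal is correct and follows the paper's proof essentially step for step. You apply the localization lemma, rescale each $F_{I,B}$ by $A_I^T$ via the rescaling lemma, use the second overlap bound to invoke $\mathrm{Dec}(\mathcal P')$ with $2M'$, then sum over $I,B$ and use $M'M''\le M$, with the error term absorbed into the $+1$. The only bookkeeping the paper adds is disposing of the empty-selected-class case before invoking $M'M''\le M$, and checking that the wave-packet bound (via $W_T(A_I^Ty)\lesssim W_{T'}(y)$) and the bounded-overlap tiling survive the common linear map.
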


\begin{proof}
Apply Lemma~\ref{app:refined-localization-lemma} to a family in
\eqref{app:refined-dec-definition}. If no main term remains, the error
term gives \eqref{exp:refined-constant-recursion}; hence assume the
selected class is nonempty.

Fix $I\in\mathcal I_h$ and a cube $B$ from the fixed tiling. For
$T\in\mathbb T_{I,B}$, write
\begin{equation}\label{app:refined-rescaled-family}
 \begin{gathered}
 T':=A_I^{-T}T,\qquad f_{T'}'(y):=f_T(A_I^Ty),\qquad
 u_T:=\frac{s_T-s_I}{h},\\
 r_j'=h^{k-1-j}r_j\quad(0\leq j\leq n),\qquad
 d'=\frac{d}{h}=d^{1-\kappa}.
 \end{gathered}
\end{equation}
Lemma~\ref{app:refined-rescaling-lemma}, with
\eqref{c13all:rp-rescale} and \eqref{app:refined-new-scales}, gives
the uniform curve $v_I$, widths \eqref{c13all:rp-thin-class},
$d'$-separation, and support \eqref{app:refined-induction-support}
at $\mathcal P'$. Here $d_k'=d_k/h$ and
$d_\ell'=d_\ell$ for $k<\ell\leq J$.
A common invertible linear map preserves the tiling and
its overlap at each fixed parameter. Using
$W_T(A_I^Ty)\lesssim W_{T'}(y)$, we obtain
\begin{equation}\label{app:refined-wave-packet-rescaling}
 \begin{gathered}
 |T'|=|\det A_I|^{-1}|T|,\qquad
 \|f_{T'}'\|_p=|\det A_I|^{-1/p}\|f_T\|_p,\\
 \begin{aligned}
 |f_{T'}'(y)|&\lesssim |T|^{-1/p}\|f_T\|_pW_{T'}(y)\\
 &=|T'|^{-1/p}\|f_{T'}'\|_pW_{T'}(y).
 \end{aligned}
 \end{gathered}
\end{equation}
For every constituent unit cube $U'$ of $Y_{I,B}$,
Lemma~\ref{app:refined-overlap-lemma} gives
\begin{equation}
 \#\{T\in\mathbb T_{I,B}:
 10(d')^{-a}T'\cap L_rU'\ne\varnothing\}<2M',
 \qquad r=J-k+1.
\end{equation}
Since $k$ is unchanged, this is
\eqref{app:refined-induction-overlap} at $\mathcal P'$.

Applying \eqref{app:refined-dec-definition} and
$\mathrm{d}x=|\det A_I|\,\mathrm{d}y$ gives
\begin{equation}\label{app:refined-inductive-estimate}
 \begin{aligned}
 \|F_{I,B}\|_{L^p(A_I^TY_{I,B})}^p
 &=|\det A_I|
   \left\|\sum_{T\in\mathbb T_{I,B}}f_{T'}'\right\|_{L^p(Y_{I,B})}^p\\
 &\leq|\det A_I|\text{Dec}(\mathcal P')^p
       (2M')^{p/2-1}
       \sum_{T\in\mathbb T_{I,B}}\|f_{T'}'\|_p^p\\
 &=\text{Dec}(\mathcal P')^p(2M')^{p/2-1}
       \sum_{T\in\mathbb T_{I,B}}\|f_T\|_p^p.
 \end{aligned}
\end{equation}
We remark that the above estimate is analogous to \cite[(4.16)]{GIOW2020}.

Insert \eqref{app:refined-inductive-estimate} into
\eqref{c13all:rp-recurrence}, using \eqref{app:refined-family-partition}:
\begin{equation}
 \begin{aligned}
 \left\|\sum_{T\in\mathbb T}f_T\right\|_{L^p(Y)}^p
 &\lesssim (\log(d^{-1}))^{C_n}h^{-p\eta}
       \text{Dec}(\mathcal P')^p(M'M'')^{p/2-1}\\
 &\qquad\times\sum_{I\in\mathcal I_h}\sum_{B\in\mathcal B}
       \sum_{T\in\mathbb T_{I,B}}\|f_T\|_p^p
       +C d^p\sum_{T\in\mathbb T}\|f_T\|_p^p\\
 &\lesssim (\log(d^{-1}))^{C_n}h^{-p\eta}
       \text{Dec}(\mathcal P')^p M^{p/2-1}\\
 &\qquad\times\sum_{T\in\mathbb T}\|f_T\|_p^p
       +C d^p\sum_{T\in\mathbb T}\|f_T\|_p^p.
 \end{aligned}
\end{equation}
The second inequality uses \eqref{c13all:rp-incidence} and $p\geq2$.
Taking the $p$th root and the supremum in
\eqref{app:refined-dec-definition} gives
\begin{equation}
 \text{Dec}(\mathcal P)
 \lesssim(\log(d^{-1}))^{C_n}h^{-\eta}
           \text{Dec}(\mathcal P')+C d.
\end{equation}
Since $d<d_*<e^{-1}$ and $h^{-\eta}\geq1$, adding $1$
proves \eqref{exp:refined-constant-recursion}.
\end{proof}

\subsection*{Step 5. Passing to the next parameter}
\begin{lemma}[The next parameter]\label{app:refined-transition-lemma}
Let the parameter tuple $\mathcal P=(k;d_k,\ldots,d_J)$ and its angular
scale $d=d(\mathcal P)$ be as in \eqref{app:refined-dec-definition},
with $1\leq k\leq J$, $0<d<d_*$, and the parameters
\eqref{app:refined-small-parameters}. If $d_k\geq d^\kappa$, put
$\mathcal P':=(k+1;d_{k+1},\ldots,d_J)$. Then
\begin{equation}\label{exp:refined-constant-transition}
 \text{Dec}(\mathcal P)
 \lesssim d^{-\kappa}\text{Dec}(\mathcal P'),
 \qquad d(\mathcal P')=d/d_k.
\end{equation}
\end{lemma}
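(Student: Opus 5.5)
The case $d_k\geq d^\kappa$ is handled by a trivial decoupling step. We cut the parameter interval at scale $h:=d_k$ and use the triangle inequality with H\"older over the $O(h^{-1})\leq O(d^{-\kappa})$ intervals; we do not decouple. We then rescale each piece by Lemma~\ref{app:refined-rescaling-lemma} with $h=d_k$. By \eqref{app:refined-new-tuples}, this lands exactly in the tuple $\mathcal P'=(k+1;d_{k+1},\ldots,d_J)$ with $d(\mathcal P')=d/d_k$. The loss is a power of $h^{-1}\leq d^{-\kappa}$, which is acceptable since $d_k$ is large.

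\emph{Partition.} Let $\mathcal I_{d_k}$ partition $[-1,1]$ into intervals of length $d_k$, and set $F_I:=\sum_{s_T\in I}f_T$. On $Y$, H\"older gives
\[
\Big\|\sum_T f_T\Big\|_{L^p(Y)}\leq(\#\mathcal I_{d_k})^{1-1/p}\Big(\sum_I\|F_I\|_{L^p(Y)}^p\Big)^{1/p}\lesssim d^{-\kappa(1-1/p)}\Big(\sum_I\|F_I\|_{L^p(Y)}^p\Big)^{1/p}.
\]

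\emph{Rescaling.} For each $I$, apply Lemma~\ref{app:refined-rescaling-lemma} with $h=d_k$, writing $y=A_I^{-T}x$ and $f'_{T'}=f_T(A_I^T\cdot)$. The lemma gives widths $r_j'=d_k^{k-1-j}r_j$. Then $r_k'=1$, and the widths with index $\leq k-1$ become thin widths at most $1$, so \eqref{c13all:rp-thin-class} holds with first index $k+1$. The new angular scale is $d'=d/d_k$, and the parameters remain $d'$-separated.

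\emph{Support condition.} The lower bounds in \eqref{app:refined-induction-support} for $\ell>k$ transfer by the invariant ratios $\lambda_j'/r_j'=\lambda_j/r_j$. The condition at $\ell=k$ is simply dropped, since $k$ is no longer a remaining index. The wave-packet bound $|f'_{T'}|\lesssim|T'|^{-1/p}\|f'_{T'}\|_pW_{T'}$ follows exactly as in \eqref{app:refined-wave-packet-rescaling}.

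\emph{Main obstacle: the overlap hypothesis.} The hard step is that $A_I^{-T}Y$ is not a union of unit cubes, so we must produce a set $Y_I$ on which \eqref{app:refined-induction-overlap} holds at $\mathcal P'$ with the constant $L_{r-1}$ and the enlargement $10(d')^{-a}$. I would take $Y_I$ to be the union of unit cubes $U'$ meeting $A_I^{-T}Y$, with $A_I^{-T}Y\subset Y_I$. For such a $U'$, choose a constituent cube $U\subset Y$ with $A_I^TU'\cap U\neq\varnothing$. Since $\|A_I^T\|\lesssim h^{-(n+1-k)}\leq d^{-(n+1)\kappa}$, we get $A_I^T(L_{r-1}U')\subset d^{-C\kappa}U$.

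The count in the rescaled hypothesis is then controlled as follows. If $10(d')^{-a}A_I^{-T}T$ meets $L_{r-1}U'$, then $10(d')^{-a}T$ meets $A_I^T(L_{r-1}U')$. Since $(d')^{-a}=d^{-a}d_k^{a}\leq d^{-a}$, and the enlargement $d^{-C\kappa}$ is absorbed because $(4n+10)\kappa<a$, this places $10d^{-a}T\cap L_rU$ nonempty. We must use the slack $10d^{-a}$ versus $10(d')^{-a}\le10\cdot d_k^{a}d^{-a}$ together with $L_r\gg L_{r-1}$, as in Lemma~\ref{app:refined-overlap-lemma}. If $d_k$ is too close to $1$ for this slack, the bounded factor $d^{-C\kappa}$ is instead absorbed by enlarging to $L_r$ relative to a coarser comparison. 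This is the step I expect to need the most care: making the constant enlargements $L_r\gg L_{r-1}$ and the $d^{-C\kappa}$ distortion compatible without changing $M$. The count therefore stays at most $M$, and the hypothesis holds with the same $M$.

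\emph{Conclusion.} Applying the definition of $\mathrm{Dec}(\mathcal P')$ to each rescaled family gives
\[
\|F_I\|_{L^p(Y)}^p\leq|\det A_I|\,\|F_I'\|_{L^p(Y_I)}^p\leq\mathrm{Dec}(\mathcal P')^pM^{p/2-1}\sum_{s_T\in I}\|f_T\|_p^p.
\]
Summing over $I$ and inserting into the H\"older step yields $\mathrm{Dec}(\mathcal P)\lesssim d^{-\kappa}\mathrm{Dec}(\mathcal P')$, which is \eqref{exp:refined-constant-transition}.
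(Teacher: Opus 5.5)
Your proposal is correct and follows the paper's proof essentially step for step. You partition at scale $h=d_k$ with an $O(d^{-\kappa})$ H\"older loss, rescale each piece into $\mathcal P'$ via Lemma~\ref{app:refined-rescaling-lemma}, take the unit cubes meeting $A_I^{-T}Y$, and check the overlap condition with the same $M$ in two regimes: for $h<\tau$ the slack $10(1-2^{-a})d^{-a}$ absorbs $\|A_I^T\|\lesssim d^{-n\kappa}$, and for $\tau\le h<1$, $L_r\gg L_{r-1}$ absorbs the distortion. One phrasing fix: in the second regime the distortion is bounded because $\|A_I^T\|\lesssim\tau^{-n}$, not because ``$d^{-C\kappa}$'' is bounded. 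The paper also treats $d_k=1$ separately, by observing that the unrescaled family is already admissible at $\mathcal P'$.
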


\begin{proof}
Fix an admissible family at $\mathcal P$, and put $r=J-k+1$.
If $d_k=1$, then $r_{k-1}=r_k=1$ and $L_{r-1}\leq L_r$;
the same family is admissible at $\mathcal P'$, so
$\text{Dec}(\mathcal P)\leq\text{Dec}(\mathcal P')$.

Suppose $d^\kappa\leq d_k<1$, and put $h:=d_k$.
Use the partition $\mathcal I_h$ of $[-1,1]$ at scale $h$
from Step~1, and write
\begin{equation}
 F_I:=\sum_{\substack{T\in\mathbb T\\s_T\in I}}f_T,
 \qquad \#\mathcal I_h\lesssim h^{-1}\leq d^{-\kappa}.
\end{equation}
The triangle inequality and H\"older's inequality give
\begin{equation}\label{app:refined-transition-sum}
 \begin{aligned}
 \left\|\sum_{T\in\mathbb T}f_T\right\|_{L^p(Y)}
 &\leq\sum_{I\in\mathcal I_h}\|F_I\|_{L^p(Y)}\\
 &\leq(\#\mathcal I_h)^{1-1/p}
       \left(\sum_{I\in\mathcal I_h}\|F_I\|_{L^p(Y)}^p\right)^{1/p}\\
 &\lesssim d^{-\kappa}
       \left(\sum_{I\in\mathcal I_h}\|F_I\|_{L^p(Y)}^p\right)^{1/p}.
 \end{aligned}
\end{equation}

For each $I$, Lemma~\ref{app:refined-rescaling-lemma} gives
\begin{equation}
 d_k'=1,\qquad r_{k-1}'=r_k'=1,\qquad
 d'=d/h=\prod_{\ell=k+1}^J d_\ell.
\end{equation}
We omit the unit parameter $d_k'=1$ and use the tuple $\mathcal P'$
for the rescaled family.
Lemma~\ref{app:refined-rescaling-lemma} and
\eqref{app:refined-wave-packet-rescaling} give the required Fourier
support, angular separation, and spatial wave packet bound at
$\mathcal P'$. The unit-cube overlap condition
\eqref{app:refined-induction-overlap} remains to be checked.

Let $Y'$ be the union of unit grid cubes meeting $A_I^{-T}Y$.
For each constituent unit cube $U'$ of $Y'$, choose a cube $U\subset Y$
with $A_I^TU'\cap U\ne\varnothing$. Then
\begin{equation}\label{app:refined-transition-cube-image}
 A_I^T(L_{r-1}U')
 \subset U+B(0,C L_{r-1}\|A_I^T\|).
\end{equation}
All dilations are about the centers. If $h<\tau<1/2$, then
$h\geq d^\kappa$ gives
\begin{equation}
 \|A_I^T\|\lesssim h^{-n}\leq d^{-n\kappa},\qquad
 10(d')^{-a}=10h^a d^{-a}
 \leq10\cdot2^{-a}d^{-a}.
\end{equation}
Since $a>n\kappa$, decreasing $d_*$ gives
\begin{equation}
 C L_{r-1}\|A_I^T\|
 \leq c\,10(1-2^{-a})d^{-a},
\end{equation}
where $B(0,c)\subset T-x_T$ uniformly, by $r_j\leq1$ and the
curve bounds. Convexity and symmetry give
\begin{equation}
 \begin{aligned}
 10(d')^{-a}T+B(0,C L_{r-1}\|A_I^T\|)
 &\subset x_T+
       \big(10h^a+10(1-2^{-a})\big)d^{-a}(T-x_T)\\
 &\subset10d^{-a}T.
 \end{aligned}
\end{equation}
For $\tau\leq h<1$, the map $A_I^T$ is uniformly bounded, so
$L_r\gg L_{r-1}$ in \eqref{app:refined-enlargements} gives
\begin{equation}
 A_I^T(L_{r-1}U')\subset L_rU,\qquad
 10(d')^{-a}T=10h^a d^{-a}T\subset10d^{-a}T.
\end{equation}
Together with \eqref{app:refined-transition-cube-image}, both cases give
\begin{equation}\label{app:refined-transition-overlap}
 \begin{aligned}
 &\#\{T\in\mathbb T:s_T\in I,\
       10(d')^{-a}A_I^{-T}T\cap L_{r-1}U'\ne\varnothing\}\\
 &\qquad\leq
 \#\{T\in\mathbb T:10d^{-a}T\cap L_rU\ne\varnothing\}
 \leq M.
 \end{aligned}
\end{equation}

By \eqref{app:refined-transition-overlap}, we may apply
\eqref{app:refined-dec-definition} at $\mathcal P'$.
Since $A_I^{-T}Y\subset Y'$, the determinant cancellation in
\eqref{app:refined-inductive-estimate} gives
\begin{equation}
 \|F_I\|_{L^p(Y)}^p
 \leq|\det A_I|\,\|F_I(A_I^T\,\cdot)\|_{L^p(Y')}^p
 \leq\text{Dec}(\mathcal P')^p M^{p/2-1}
 \sum_{\substack{T\in\mathbb T\\s_T\in I}}\|f_T\|_p^p.
\end{equation}
Substitute this in \eqref{app:refined-transition-sum}, sum over
the partition $\mathcal I_h$, and take the supremum in
\eqref{app:refined-dec-definition}. This gives
\begin{equation}
 \text{Dec}(\mathcal P)\lesssim
 d^{-\kappa}\text{Dec}(\mathcal P').
\end{equation}
\end{proof}

\subsection*{Step 6. Iteration}
\begin{lemma}[Iteration]\label{app:refined-iteration-lemma}
With the parameters in \eqref{app:refined-small-parameters}, every
parameter tuple $\mathcal P$ in \eqref{app:refined-dec-definition},
including the terminal parameter tuple $(J+1;)$, satisfies
\begin{equation}
 \text{Dec}(\mathcal P)\lesssim d(\mathcal P)^{-\varepsilon}.
\end{equation}
\end{lemma}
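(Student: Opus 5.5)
We argue by strong induction on the pair consisting of the number $r=J-k+1$ of remaining parameters and the size of $\log(1/d(\mathcal P))$, combining the base case \eqref{app:refined-base-case} with the two recursions in Lemmas~\ref{app:refined-recursion-lemma} and~\ref{app:refined-transition-lemma}. Fix $\mathcal P=(k;d_k,\ldots,d_J)$. If $d(\mathcal P)\geq d_*$ (in particular for the terminal tuple $(J+1;)$), then \eqref{app:refined-base-case} gives $\mathrm{Dec}(\mathcal P)\lesssim 1\leq d^{-\varepsilon}$. Otherwise $d<d_*$, and we split according to whether $d_k<d^\kappa$ or $d_k\geq d^\kappa$.

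\emph{Case $d_k<d^\kappa$.} We apply Lemma~\ref{app:refined-recursion-lemma} repeatedly. Each step keeps $k$ fixed, replaces $d$ by $d^{1-\kappa}$, and costs a factor $C(\log d^{-1})^{C_n}d^{-\kappa\eta}$. After $m$ steps the angular scale is $d^{(1-\kappa)^m}$. We stop as soon as either this scale exceeds $d_*$ or the current $d_k'$ satisfies $d_k'\geq (d')^\kappa$. The number of steps is $m\lesssim \kappa^{-1}\log\log(1/d)$. The accumulated loss is therefore
\[
\prod_{i<m} C\bigl((1-\kappa)^i\log d^{-1}\bigr)^{C_n} d^{-\kappa\eta(1-\kappa)^i}\lesssim C^{m}(\log d^{-1})^{C_nm}\, d^{-\eta}.
\]
Here the $d^{-\eta}$ comes from summing the geometric series. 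The troublesome factor $(\log d^{-1})^{C m}$, with $m\sim\log\log(1/d)$, grows like $\exp(C(\log\log d^{-1})^2)$. This is still $\lesssim_\varepsilon d^{-\varepsilon/3}$, so after choosing $\eta\le\varepsilon/3$ the whole loss is $\lesssim d^{-2\varepsilon/3}$.

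\emph{Case $d_k\ge d^\kappa$.} We apply Lemma~\ref{app:refined-transition-lemma}, which passes to $(k+1;d_{k+1},\ldots,d_J)$ at a cost of $Cd^{-\kappa}$ and with $d(\mathcal P')=d/d_k\le d^{1-\kappa}\le 1$. Since $k$ increases at most $J$ times, the total loss from these transitions is at most $C^J d^{-J\kappa}\le C^Jd^{-\varepsilon/3}$ by \eqref{app:refined-small-parameters}.

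\emph{Bookkeeping.} The iteration alternates blocks of the first kind with single transition steps, and the angular scale never decreases along the way. Let $d^{(i)}$ denote the scale at the start of the $i$-th block. The loss of that block is bounded by $C_\varepsilon (d^{(i)})^{-2\varepsilon/(3J+3)}$, after running the first-case argument with $\varepsilon/(J+1)$ in place of $\varepsilon$. Since $d^{(i)}\geq d$, the losses from at most $J+1$ blocks multiply to $\lesssim d^{-2\varepsilon/3}$. Including the transitions, the total is $\lesssim d^{-\varepsilon}$, and the iteration terminates at a tuple where \eqref{app:refined-base-case} applies. A cleaner way to organize this is to prove the claim $\mathrm{Dec}(\mathcal P)\le C_{\varepsilon,r}d^{-\varepsilon}$ by induction on $(r,\lceil\log\log(1/d)\rceil)$. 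This requires a slightly refined per-step bound: one uses $\eta$ small and absorbs $(\log d^{-1})^{C_n}\le C d^{-\kappa\varepsilon/2}$, so that each step contributes $d^{-\kappa\varepsilon}$ and the recursion closes, since $d^{-\kappa\varepsilon}(d^{1-\kappa})^{-\varepsilon}=d^{-\varepsilon}$.

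The main obstacle is exactly this: controlling the constant $C$ and the logarithmic factors in Lemma~\ref{app:refined-recursion-lemma} so that they do not compound over the $\sim\log\log(1/d)$ iterations. We handle it by the absorption just described, taking $d_*$ small enough (depending on $\varepsilon,\kappa$) that $C(\log d^{-1})^{C_n}h^{-\eta}\le d^{-\kappa\varepsilon}$ whenever $d<d_*$. With this choice the induction on $d$ closes with constant exactly preserved, and the transitions in $k$ contribute only the bounded factor $C^J d^{-J\kappa}$.
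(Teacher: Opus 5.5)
Your argument is correct and is essentially the paper's proof: you alternate the recursion and transition lemmas, count at most $J$ transitions and $O(1+\log\log(1/d))$ recursion steps, telescope the factors $h^{-\eta}$ to $d^{-\eta}$ using monotonicity of the angular scale, bound the compounded constants and logarithms by $\exp(C(1+\log\log d^{-1})^2)\lesssim d^{-\varepsilon_1}$, and absorb $d^{-J\kappa}$ via $J\kappa<\varepsilon/3$; the paper does this in one global product rather than block by block, which is only cosmetic. One caveat on your ``cleaner'' inductive variant: a transition step gives $d^{-\kappa}(d/d_k)^{-\varepsilon}$, which does not close with a fixed exponent $\varepsilon$, so the exponent there must depend on the number $r$ of remaining parameters (e.g.\ $\varepsilon'+r\kappa$), which your block-by-block bookkeeping already accounts for.
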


\begin{proof}
By \eqref{app:refined-small-parameters}, choose $\eta,\varepsilon_1>0$
with
\begin{equation}
 \eta+J\kappa+\varepsilon_1<\varepsilon.
\end{equation}
Start from a parameter tuple $\mathcal P_0$, with $d:=d(\mathcal P_0)<d_*$;
the other case is \eqref{app:refined-base-case}.
For the current parameter tuple $\mathcal P_i=(k;d_k,\ldots,d_J)$, apply
Lemma~\ref{app:refined-recursion-lemma} if
$d_k<d(\mathcal P_i)^\kappa$ (first kind), and
Lemma~\ref{app:refined-transition-lemma} otherwise (second kind).
Stop when $d(\mathcal P_i)\geq d_*$ or $\mathcal P_i=(J+1;)$.
The scale changes are
\begin{equation}\label{app:refined-iteration-scales}
 d(\mathcal P_{i+1})=
 \begin{cases}
 d(\mathcal P_i)^{1-\kappa},&\text{first kind},\\
 d(\mathcal P_i)/d_k,&\text{second kind},
 \end{cases}
 \qquad
 d\leq d(\mathcal P_i)\leq d(\mathcal P_{i+1})\leq1.
\end{equation}

There are at most $J$ steps of the second kind, since each removes
one parameter. By \eqref{app:refined-iteration-scales}, after $m$
steps of the first kind,
\begin{equation}
 \log\big(d(\mathcal P_i)^{-1}\big)
 \leq(1-\kappa)^m\log(d^{-1}).
\end{equation}
Thus the total number of steps is
$O(1+\log\log(d^{-1}))$.
At the last step, \eqref{app:refined-base-case} gives
$\text{Dec}(\mathcal P_i)+1\lesssim1$; the terminal parameter tuple
$(J+1;)$ has angular scale $1$.

At a step of the first kind, put
$h_i:=d(\mathcal P_i)^\kappa
=d(\mathcal P_i)/d(\mathcal P_{i+1})$.
The monotonicity in \eqref{app:refined-iteration-scales} gives
\begin{equation}\label{app:refined-iteration-powers}
 \begin{gathered}
 \prod_{\substack{i:\text{step }i\text{ of first kind}}}h_i^{-\eta}
 =\left(
 \prod_{\substack{i:\text{step }i\text{ of first kind}}}
 \frac{d(\mathcal P_{i+1})}{d(\mathcal P_i)}
 \right)^\eta
 \leq\left(
 \prod_{\substack{i:\text{all steps}}}
 \frac{d(\mathcal P_{i+1})}{d(\mathcal P_i)}
 \right)^\eta
 \leq d^{-\eta},\\
 \prod_{\substack{i:\text{step }i\text{ of second kind}}}
       d(\mathcal P_i)^{-\kappa}
 \leq\prod_{\substack{i:\text{step }i\text{ of second kind}}}
       d^{-\kappa}
 \leq d^{-J\kappa}.
 \end{gathered}
\end{equation}

Let $C_\eta$ bound the constants in both recursive estimates.
Since $d\leq d(\mathcal P_i)<d_*<e^{-1}$ at every
step, the remaining constants and logarithmic factors satisfy
\begin{equation}\label{app:refined-iteration-logs}
 \begin{aligned}
 \prod_{\substack{i:\text{all steps}}}
 C_\eta\big(\log(d(\mathcal P_i)^{-1})\big)^{C_n}
 &\leq\left[C_\eta\big(\log(d^{-1})\big)^{C_n}\right]^%
 {C_{\kappa,d_*}(1+\log\log(d^{-1}))}\\
 &\leq\exp\!\left(C_{n,\kappa,d_*,\eta}
 (1+\log\log(d^{-1}))^2\right)
 \lesssim d^{-\varepsilon_1}.
 \end{aligned}
\end{equation}
The last inequality follows from
$(1+\log\log(d^{-1}))^2=o(\log(d^{-1}))$ as $d\to0$.
Since $d(\mathcal P_i)^{-\kappa}\geq1$,
\eqref{exp:refined-constant-transition} also holds with $\text{Dec}$
replaced by $\text{Dec}+1$.
Iterating \eqref{exp:refined-constant-recursion} and
\eqref{exp:refined-constant-transition} in this form, then using the terminal
bound \eqref{app:refined-base-case},
\eqref{app:refined-iteration-powers}, and
\eqref{app:refined-iteration-logs}, gives
\begin{equation}
 \text{Dec}(\mathcal P_0)+1
 \lesssim d^{-\eta}d^{-J\kappa}d^{-\varepsilon_1}
 =d^{-(\eta+J\kappa+\varepsilon_1)}
 \leq d^{-\varepsilon}.
\end{equation}
\end{proof}

\begin{proof}[Proof of Proposition~\ref{c13all:rp}]
For the initial parameter tuple $\mathcal P_0=(1;\delta_1,\ldots,\delta_J)$,
take $d_j=\delta_j$. Then
\eqref{c13all:rp-thin-class} and
\eqref{app:refined-initial-scales} give
\begin{equation}
 r_j=\rho_j\quad(0\leq j\leq n),\qquad
 d(\mathcal P_0)=\prod_{j=1}^J\delta_j=\delta.
\end{equation}
The support condition is \eqref{app:refined-signed-change}. Also,
\eqref{app:refined-box-comparison} gives
\begin{equation}
 \begin{aligned}
 &\#\{T^{(0)}:10\delta^{-a}T^{(0)}\cap L_JU\ne\varnothing\}\\
 &\qquad\leq
 \#\{T:10\delta^{-a_0}T\cap L_JU\ne\varnothing\}\leq M
 \end{aligned}
\end{equation}
for each constituent unit cube $U$ of $Y$. The initial family
therefore enters \eqref{app:refined-dec-definition}, and
Lemma~\ref{app:refined-iteration-lemma} gives
\begin{equation}
 \begin{aligned}
 \left\|\sum_{T\in\mathbb T}f_T\right\|_{L^p(Y)}
 &\leq\text{Dec}(\mathcal P_0)M^{1/2-1/p}
       \left(\sum_{T\in\mathbb T}\|f_T\|_p^p\right)^{1/p}\\
 &\lesssim\delta^{-\varepsilon}M^{1/2-1/p}
       \left(\sum_{T\in\mathbb T}\|f_T\|_p^p\right)^{1/p}.
 \end{aligned}
\end{equation}
This proves \eqref{c13all:rp-estimate} for the fixed exponent $a_0=2a$.
\end{proof}

\bibliographystyle{alpha}
\bibliography{reference}

\end{document}